


\documentclass[11pt]{article} 

\usepackage[utf8]{inputenc} 


\usepackage{geometry} 
\geometry{a4paper} 

\usepackage{graphicx} 
\graphicspath{ {./images/} }


\usepackage{booktabs} 
\usepackage{array} 
\usepackage{paralist} 
\usepackage{verbatim} 
\usepackage{subfig} 
\usepackage[toc,page]{appendix}
\usepackage{hhline}

\usepackage{fancyhdr} 
\pagestyle{fancy} 
\lhead{}\chead{}\rhead{}
\lfoot{}\cfoot{\thepage}\rfoot{}


\usepackage[nottoc,notlof,notlot]{tocbibind} 
\usepackage[titles,subfigure]{tocloft} 


\usepackage{tikz}
\usetikzlibrary{positioning, fit, shapes}

\textheight 22.75 cm \topmargin -2.5 mm 
\textwidth 16.0 cm \oddsidemargin -0 cm

\usepackage{amssymb}
\usepackage{amsmath}
\usepackage{amsfonts}
\usepackage{mathtools}
\usepackage{planarforest}
\usepackage[ruled,vlined]{algorithm2e}
\usepackage{upgreek}
\usepackage[mathscr]{eucal}
\usepackage{algorithmic}
\usepackage[hidelinks]{hyperref}
\usepackage{longtable}
\usepackage{makecell}

\newtheorem{theorem}{Theorem}[section]
\newtheorem{lemma}[theorem]{Lemma}
\newtheorem{proposition}[theorem]{Proposition}
\newtheorem{corollary}[theorem]{Corollary}
\newtheorem{corollary-definition}[theorem]{Corollary-Definition}

\newtheorem{definition}[theorem]{Definition}
\newtheorem{remark}[theorem]{Remark}
\newtheorem{ex}[theorem]{Example}
\newtheorem{conjecture*}{Conjecture}

\newcommand{\qed}{\hfill$\square$}

\newcommand{\graft}{\curvearrowright}

\newcommand{\subs}{\triangleright}

\newcommand{\ins}{\blacktriangleright}
\newcommand{\gl}{\diamond}

\newcommand{\dprime}{{\prime\prime}}

\newcommand{\ET}{\mathcal{ET}}
\newcommand{\EF}{\mathcal{EF}}
\newcommand{\AF}{\mathcal{AF}}
\newcommand{\AT}{\mathcal{AT}}

\newcommand{\CF}{\mathcal{CF}}
\newcommand{\CEFone}{{\mathcal{CEF}^1}}
\newcommand{\CEF}{\mathcal{CEF}}
\newcommand{\EA}{\mathcal{EA}}
\newcommand{\EAT}{\mathcal{EAT}}
\newcommand{\PEAF}{\Prim(\EAF)}
\newcommand{\EAF}{\mathcal{EAF}}
\newcommand{\blank}{{-}}
\newcommand{\tildegraft}{\mathbin{\tilde{\graft}}}

\newcommand{\one}{\ensuremath{\mathbf{1}}\xspace}

\newcommand{\E}{\ensuremath{\mathbb{E}}\xspace}

\newcommand{\N}{\ensuremath{\mathbb{N}}\xspace}

\newcommand{\R}{\ensuremath{\mathbb{R}}\xspace}
\newcommand{\T}{\ensuremath{\mathbb{T}}\xspace}

\renewcommand{\AA}{\ensuremath{\mathcal{A}}\xspace}

\newcommand{\CC}{\ensuremath{\mathcal{C}}\xspace}

\newcommand{\EE}{\ensuremath{\mathcal{E}}\xspace}
\newcommand{\FF}{\ensuremath{\mathcal{F}}\xspace}

\newcommand{\JJ}{\ensuremath{\mathcal{J}}\xspace}

\newcommand{\LL}{\ensuremath{\mathcal{L}}\xspace}
\newcommand{\MM}{\ensuremath{\mathcal{M}}\xspace}
\newcommand{\NN}{\ensuremath{\mathcal{N}}\xspace}

\renewcommand{\SS}{\ensuremath{\mathcal{S}}\xspace}
\newcommand{\TT}{\ensuremath{\mathcal{T}}\xspace}

\newcommand{\XX}{\ensuremath{\mathcal{X}}\xspace}

\DeclareMathOperator{\Div}{div}

\DeclareMathOperator{\Aut}{Aut}

\DeclareMathOperator{\id}{id}

\DeclareMathOperator{\Span}{Span}
\DeclareMathOperator{\supp}{supp}
\DeclareMathOperator{\Prim}{Prim}
\DeclareMathOperator{\Char}{Char}

\usepackage{tikz-cd}

\def\ts{\thinspace}
\newcommand{\abs}[1]{\left\vert #1\right\vert}

\newcommand\restrict[1]{\raisebox{-.5ex}{$|$}_{#1}}

\newcolumntype{C}[1]{>{\centering\let\newline\\\arraybackslash\hspace{0pt}}m{#1}}

\newenvironment{proof}[1][Proof]{\begin{trivlist}
\item[\hskip \labelsep {\bfseries #1}]}{\qed \end{trivlist}}

\bibliographystyle{abbrv}



\title{
Hopf algebra structures for the backward error analysis of ergodic stochastic differential equations
}

\author{
Eugen Bronasco\textsuperscript{1} and Adrien Laurent\textsuperscript{2}
}

\begin{document}
\footnotetext[1]{
Université de Genève, Section de mathématiques, Genève, Switzerland. Eugen.Bronasco@unige.ch.}
\footnotetext[2]{
Univ Rennes, INRIA (Research team MINGuS), IRMAR (CNRS UMR 6625) and ENS Rennes,
France. Adrien.Laurent@INRIA.fr.}

\maketitle

\begin{abstract}
While backward error analysis does not generalise straightforwardly to the strong and weak approximation of stochastic differential equations, it extends for the sampling of ergodic dynamics.
The calculation of the modified equation relies on tedious calculations and there is no expression of the modified vector field, in opposition to the deterministic setting.
We uncover in this paper the Hopf algebra structures associated to the laws of composition and substitution of exotic aromatic S\nobreakdash-series, relying on the new idea of clumping. We use these algebraic structures to provide the algebraic foundations of stochastic numerical analysis with S\nobreakdash-series, as well as an explicit expression of the modified vector field as an exotic aromatic B\nobreakdash-series.

\smallskip

\noindent
{\it Keywords:\,} geometric numerical integration, exotic aromatic series, Butcher series, clumped forests, Hopf algebra, stochastic differential equations, backward error analysis, invariant measure, ergodicity.
\smallskip

\noindent
{\it AMS subject classification (2020):\,}
Primary:
16T05, 
41A58, 
60H35. 
Secondary:
37M25, 
65L06, 
70H45. 
%
\end{abstract}



\section{Introduction}

Consider overdamped Langevin dynamics on the~$d$-dimensional torus~$\T^d$ of the following form,
\begin{equation}
\label{equation:Langevin_Rd}
dX(t)=f(X(t))dt +dW(t),
\end{equation}
where~$f=-\nabla V$ is a gradient vector field deriving from a $\CC^\infty$ potential $V$,~$W$ is a standard $d$\nobreakdash-dimensional Brownian motion on a probability space equipped with a filtration~$(\FF_t)_{t>0}$ and fulfilling the standard assumptions.
There are three main types of approximations of~\eqref{equation:Langevin_Rd}. A strong approximation focuses on approaching~$X(t)$ for a given realisation of~$W$. A weak approximation approaches averages of functionals of the solution~$\E[\phi(X(t))]$ where~$\phi\in \CC^\infty(\T^d,\R)$ is a smooth test function.
An approximation for the invariant measure approximates averages of functionals at the equilibrium for ergodic systems.
This paper develops new algebraic tools for the creation and study of integrators for the approximation of overdamped Langevin dynamics in~$\T^d$ and also on manifolds~$\MM$, in the weak sense and for the invariant measure.

For the study of ordinary differential equations~$y'=f(y)$, backward error analysis~\cite{Hairer06gni} rewrites a one-step numerical integrator~$y_{n+1}=\Phi_h(y_n)$ as the exact solution of a modified problem~$\tilde{y}'=\tilde{f}(\tilde{y})$. The modified vector field~$\tilde{f}$ typically writes as a formal series in~$f$ and its partial derivatives (a Butcher series) and the properties of the integrator (order, preservation of invariants or measures, symplecticity,...) can be read on~$\tilde{f}$.
Backward error analysis was in particular instrumental in the understanding of the long time energy preservation property of symplectic methods~\cite{Benettin94oth}.
In the stochastic setting, it is known since~\cite{Shardlow06mef} that backward error analysis does not generalise straightforwardly for the Euler-Maruyama method in the strong or weak sense.
The approach is successful in~\cite{Zygalakis11ote} for the Milstein scheme, but only up to order two.
In~\cite{Debussche12wbe}, the idea of backward error analysis is generalised in the context of the approximation for the invariant measure: assume that the exact and numerical dynamics are ergodic, that is, the solution of~\eqref{equation:Langevin_Rd} behaves in long time according to a probability density~$\rho_\infty$:
\[\lim_{T\to\infty}\frac{1}{T}\int_0^T \phi(X(t)) dt= \int_{\T^d} \phi(x) \rho_\infty(x) dx \quad \text{almost surely},\]
and similarly, the integrator follows in long time a density~$\rho^h$, where~$h$ is the timestep.
Then, using the idea of backward error analysis, the paper~\cite{Debussche12wbe} provides an explicit expansion of the invariant measure~$\rho^h$ of the integrator of the form
$$\rho^h=\rho_\infty+h\rho_1+h^2\rho_2+\dots$$
The idea was generalised for the creation of modified equations in~\cite{Abdulle14hon}: given an equation of the form~\eqref{equation:Langevin_Rd} and an integrator~$X_{n+1}=\Phi_h(X_n)$ satisfying mild assumptions, there exists a (truncated) modified vector field~$\tilde{f}^{[N]}=f+hf_1+\ldots+h^Nf_N$ for any $N$,
such that the integrator applied to the following modified problem has order $N$ for sampling the invariant measure of~\eqref{equation:Langevin_Rd}:
\[
d\tilde{X}(t)=\tilde{f}^{[N]}(\tilde{X}(t))dt +dW(t).
\]
The modified vector field~$\tilde{f}=f+hf_1+\ldots$ is iteratively defined in~\cite{Abdulle14hon} as a formal power series in $h$ in relation to the measure $\rho^h$ of the integrator: \[
\LL^*\rho_1=-\Div(f_1\rho_\infty),
\quad
\LL^*\rho_2=-\Div(f_1\rho_1)-\Div(f_2\rho_\infty),\quad \dots
\]
where $\LL$ is the generator of equation \eqref{equation:Langevin_Rd}. Note that $\tilde{f}$ is not uniquely defined by these identities.
The calculations of the modified vector field~$\tilde{f}$ are intricate, so that the algebraic tool of exotic Butcher series was introduced in~\cite{Laurent20eab} to simplify the approach.
While the original approach of~\cite{Abdulle14hon} works at any order, there is no proof in~\cite{Laurent20eab} that the calculations rewrite with exotic B\nobreakdash-series beyond order three. In addition, there is no explicit expression of the modified vector field~$\tilde{f}$.
This paper answers these problems by providing the Hopf algebra foundations of exotic series and by using them to give a new explicit expression of the modified vector field~$\tilde{f}$ as an exotic B\nobreakdash-series at any order.



Butcher-series were first introduced in~\cite{Butcher72aat,Hairer74otb} (see also the textbooks~\cite{Hairer06gni,Butcher16nmf,Butcher21bsa} and the review~\cite{McLachlan17bsa}) for the study of order conditions for Runge-Kutta methods in numerical analysis.
They were later applied successfully to a variety of different fields such as geometric numerical integration~\cite{Hairer06gni}, quantum field theory~\cite{Connes98har}, rough paths~\cite{Gubinelli10ror,Hairer15gvn}, or to stochastic numerical analysis.
We mention in particular the aromatic extension of B\nobreakdash-series introduced in~\cite{Chartier07pfi,Iserles07bsm} for the study of volume-preserving integrators. They allow to compute the divergence of a B\nobreakdash-series, and were later studied in~\cite{McLachlan16bsm, MuntheKaas16abs, Bogfjellmo19aso, Floystad20tup, Bogfjellmo22uat, Laurent23tab, Laurent23tld} for their algebraic, geometric, and numerical properties.
For stochastic numerical analysis, we mention in particular the early works~\cite{Burrage96hso,Komori97rta,Burrage00oco} that first introduced stochastic trees and B\nobreakdash-series for the strong convergence of SDEs, and the works by Rö{\ss}ler~\cite{Rossler04ste,Rossler06rta,Rossler06rkm,Rossler10ste} and Debrabant and Kv{\ae}rn{\o}~\cite{Debrabant08bsa,Debrabant10rkm,Debrabant11cos} for the design and analysis of high order strong and weak integrators on a finite time interval, as well as the works~\cite{Anmarkrud17ocf,Kupper12ark}.
The different stochastic extensions of B\nobreakdash-series were recently formalised and unified into the exotic aromatic S\nobreakdash-series formalism~\cite{Laurent20eab, Laurent21ocf, Laurent21ata, Bronasco22ebs} for the study of stochastic numerical analysis in the weak sense and for the invariant measure. It was then shown in~\cite{Laurent23tue} that the exotic aromatic formalism is a very natural extension of B\nobreakdash-series and aromatic B\nobreakdash-series as they all satisfy similar universal geometric properties.

We present in this paper the algebraic and combinatorial structures related to the exotic aromatic S\nobreakdash-series and study their concrete applications in stochastic numerical analysis.
This is not straightforward as the Hopf algebra structures used in the deterministic case~\cite{Chartier05asl,Chartier10aso} do not extend naturally to the stochastic case.
We introduce the concept of decorated aromatic S\nobreakdash-series, defined using decorated aromatic forests, to simplify the study of the algebraic structures related to the exotic aromatic S\nobreakdash-series. Our investigation involves the study of the D-algebra structure~\cite{Munthe-Kaas-Wright-Dalgebra} over the decorated aromatic forests, and we employ this structure to introduce a Hopf algebroid~\cite{Lu_Hopf_algebroids,RinehartBialgebra}, which we refer to as the Grossman-Larson Hopf algebroid. This Hopf algebroid is intricately linked to the composition law of decorated aromatic S\nobreakdash-series.
The substitution law does not generalise straightforwardly with decorated aromatic S\nobreakdash-series and the main difficulty comes from the aromas, as first observed in~\cite{Bogfjellmo19aso} in the context of aromatic forests.
We introduce the concept of clumped forests, which represent monomials of aromatic trees and form the universal enveloping algebra of aromatic trees. The difference between aromatic forests and clumped forests lies in the aromas that are attached to the rooted components in the latter case:~$\forest{(b),b} \cdot \forest{b[b]} \neq \forest{b}\cdot\forest{(b),b[b]}$.
These clumped forests are essential in establishing the algebraic foundations required for the substitution law, in the spirit of~\cite{Calaque11tih,Lundervold11hao,Lundervold13bea,Lundervold15oas,Rahm22aoa}. Subsequently, we proceed to construct the space of exotic aromatic forests using the decorated aromatic forests we introduced earlier. We use our analysis of algebraic structures over decorated aromatic forests to present the composition~\cite{Bronasco22ebs} and substitution law for the exotic aromatic S\nobreakdash-series.


The Hopf algebra structures related to exotic aromatic S\nobreakdash-series allow us to elegantly formalise stochastic numerical analysis theory.
The composition law gives the methodology to derive order conditions for
arbitrarily high orders, and to study the accuracy of the composition of different numerical integrators and of postprocessors~\cite{Vilmart15pif}.
The substitution law allows us to derive a new explicit expression of a modified vector field that writes as an exotic B\nobreakdash-series for backward error analysis and for creating modified equations in the Euclidean case. This simplifies greatly the computation of the modified vector field and the exotic B\nobreakdash-series expression ensures that the modified vector field satisfies important natural geometric properties such as orthogonal equivariance~\cite{Laurent23tue}.


The structure of the paper is the following.
Section~\ref{section:Decorated aromatic forests} presents a comprehensive summary of the new algebraic structure of exotic aromatic forests, relying on the novel idea of clumping and on two Hopf algebras of trees.
The numerical applications are gathered in Section~\ref{section:numerical_analysis_application} and include the algebraic formalisation of stochastic order theory in the weak sense and for the invariant measure, the application of the composition law to the composition of integrators and postprocessors, and the application of the substitution law to backward error analysis and modified equations.
We then present in Section~\eqref{sec:algebraic_structure} the detailed study of the algebraic structures associated to exotic aromatic forests.

\section{Exotic aromatic and clumped forests}
\label{section:Decorated aromatic forests}

We consider graphs~$\pi=(V,E,S)$ where~$V$ is a finite set of vertices and~$E\subset V\times V$ is a set of directed edges. The empty graph is included and written~$\mathbf{1}$. If~$e = (v,w) \in E$, the edge~$e$ is going from the source~$v$ to the target~$w$,~$v$ is a predecessor of~$w$, and~$w$ is a successor of~$v$.
The stolons\footnote{In botany, stolons are horizontal connections that link the base of two plants, allowing a plant to clone itself. Strawberry plants are an example of plants with stolons.} in~$S\subset V\times V/\langle(x,y)-(y,x)\rangle$ link some of the vertices without successors by pairs, but these vertices are in at most one stolon.
The vertices that do not have a successor and are not part of stolons are called roots and the vertices that are not in stolons and do not have predecessors are called leaves.
The graphs have two kinds of connected components: the ones that have a root, called trees, and the other ones, called aromas. The aromas either have a stolon or have a cycle, that is a list of vertices~$\{v_1,\dots,v_n\}$ where~$v_{i+1}$ is the successor of~$v_i$ and~$v_1$ is the successor of~$v_n$.
We call aromatic forests, gathered in the set~$AF$, the equivalence classes of such graphs, where two graphs are equivalent if there exists a bijection between their sets of vertices and edges that preserve successors, predecessors, and stolons.
When drawn, by convention, the orientation of the edges goes from top to bottom and in counterclockwise direction for cycles.

We consider the following subsets of~$AF$.
The set of trees~$T$ contains the connected aromatic forests with one root. The trees with up to four vertices are
\[ \forest{b}, \ \forest{b[b]}, \ \forest{b[b,b]}, \ \forest{b[b[b]]}, \ \forest{b[b,b,b]}, \ \forest{b[b,b[b]]}, \ \forest{b[b[b,b]]}, \ \forest{b[b[b[b]]]}. \]
We obtain the set~$F$ of forests by taking all possible unordered monomials of trees, including the empty forest~$\one$.
\[F=\{ \mathbf{1}, \ \forest{b}, \ \forest{b,b}, \ \forest{b[b]}, \ \forest{b,b,b}, \ \forest{b,b[b]}, \ \forest{b[b,b]}, \ \forest{b[b[b]]},\dots\} \]
The subset of aromatic forests with~$n$ roots is written~$AF_n$, which gives us a first grading on~$AF$. In particular, the set of aromas is
\[A=AF_0=\{\mathbf{1}, \ \forest{(b)}, \ \forest{b=b}, \ \forest{(b,b)}, \ \forest{(b[b])}, \ \forest{b=b,(b)}, \ \forest{b=b,b=b}, \dots\},\]
with the double edge denoting a stolon, and the aromatic trees are in
\[AT=AF_1=\{ \forest{(b,b[b,b[b]],b[b]),(b,b),b[b]}, \ \forest{(b),(b),(b,b[b]),b[b[b]]}, \ \forest{b[b,b]},\dots \}.\]
Note that the set~$AF \simeq A \times F$ of aromatic forests is the product of the sets of aromas and forests.
The vector space spanned by a given set is written in calligraphic font. For instance, we write~$\AF=\Span_\R(AF)$.

Note that the definition of aromas taken here is more general than in~\cite{Iserles07bsm,Chartier07pfi,Bogfjellmo19aso}, as we choose to include stolons among the aromas. The reason is that the combinatorial structure of stolonic and cyclic aromas is similar, and the exotic extension of the standard aromatic forests relies in particular on stolons.

Decorated aromatic forests are aromatic forests~$\pi \in AF$ endowed with a decoration of their vertices~$\alpha : V(\pi) \to D$ for a given set~$D$. A morphism~$\varphi : (\pi_1, \alpha_1) \to (\pi_2, \alpha_2)$ between decorated aromatic forests is a graph morphism~$\varphi : \pi_1 \to \pi_2$ such that~$\alpha_1 = \alpha_2 \circ \varphi$. The group of automorphisms is denoted by~$\Aut (\pi, \alpha)$ and~$\sigma(\pi,\alpha) := |\Aut (\pi, \alpha)|$. The set of decorated aromatic forests is denoted by~$AF_D$ and the corresponding vector space by~$\AF_D$.
We will often omit writing~$\alpha$ for simplicity and will denote the elements of~$AF_D$ by~$\pi$.

The vector space~$\AF_D$ of decorated aromatic forests forms a commutative algebra~$(\AF_D, \cdot)$ which can be defined as the symmetric algebra~$S_{\AA_D}(\AT_D)$ of decorated aromatic trees~$\AT_D$ over the ring of decorated aromas~$\AA_D$.
We consider the symmetric algebra~$\CF_D := S(\AT_D)$ of decorated aromatic trees~$\AT_D$ over the base field~$\R$. The basis of~$\CF_D$ is denoted by~$CF_D$ and its elements are called clumped forests. We note that~$\forest{(b),b} \cdot \forest{b[b]} \neq \forest{b}\cdot\forest{(b),b[b]}$ in~$CF_D$.
Some elements of~$\CF_D$ are listed below:
\[ \forest{(b),b} \cdot \forest{b[b,b]}, \quad \forest{b} \cdot \forest{(b),b[b,b]}, \quad \forest{(b,b[b]),b[b,b]} \cdot \forest{b=b,b[b]} \cdot \forest{(b[b,b]),b=b[b],b}. \]

A detailed study of the algebraic structure of decorated aromatic and clumped forests is presented in Section~\ref{sec:algebraic_structure}.

\subsection{Decorated aromatic S-series}
\label{sec:S-series}

Let~$I$ be a finite set of indices. Then the elementary differential~$f^k_I$ is defined as:
\[ f^k_I := \frac{\partial^{|I|}f^k}{\prod_{i \in I} \partial x_i}.\]
We define the elementary differential map~$F_D$ by
\begin{align*}
F_D(\pi)[\phi]=\sum_{\underset{v\in V}{i_v=1,\dots,d}}
    \delta_ {I_S}\left( \prod_{v\in V} f^{i_v}_{\alpha(v), I_{\Pi(v)}} \right) \phi_{I_{R}},
\end{align*}
where~$V$ is the set of vertices,~$R$ is the set of roots,~$\Pi(v)$ is the set of predecessors of~$v$,~$\delta_{I_S}$ identifies the indices of the stolons, that is,~$i_x=i_y$ if~$(x,y)\in S$, and~$f_{\alpha(v)}$ is the vector field corresponding to the vertices decorated by~$\alpha(v)$. For example,
\[ F_D (\forest{b=w,(w),w[b,b],b[w]})[\phi] = \sum_{i,j,k,l,m,n,p=1}^d f^i g^i g^j_j f^k f^l g^m_{ij} g^n f^p_n \phi_{m,p}. \]
where~$f$ and~$g$ are vector fields corresponding to~$\bullet$ and~$\circ$, respectively.

Let us introduce S\nobreakdash-series over decorated aromatic forests. Let~$\overline{\AF}_D$ be the space of formal sums of the following form
\[ \sum_{\pi \in AF_D} a(\pi) \pi, \quad \text{with } a \in \AF_D^*.  \]
It is the completion with respect to the graduation given by the number of vertices. We extend~$F_D$ by linearity to~$\overline{\AF}_D$.
Recall the symmetry coefficient~$\sigma(\pi) := |\Aut(\pi)|$.
Let the map~$\delta_\sigma : \AF_D^* \to \overline{\AF}_D$ be the isomorphism between the dual and the completion given by
\[ \delta_\sigma (a) = \sum_{\pi \in AF_D} \frac{a(\pi)}{\sigma(\pi)} \pi. \]
We will abuse the notation and use~$\delta_\sigma$ to denote analogous isomorphisms for other spaces. The space will be made clear from the context and will always be the domain of the functional to which~$\delta_\sigma$ is applied.

Let~$\abs{.}\colon AF_D\rightarrow\N$ be a map defining a finite grading on~$AF_D$, that we call the order on decorated aromatic forests.
\begin{definition}
\label{definition:S-series}
An S\nobreakdash-series over decorated aromatic forests is defined for~$a \in \AF_D^*$ and some smooth vector fields~$f_d$,~$d \in D$, as the following formal power series
    \[ S_{(f_d)_{d\in D}}^h(a)[\phi] := \sum_{\pi \in AF_D} h^{\abs{\pi}}\frac{a(\pi)}{\sigma(\pi)} F_D (\pi)[\phi], \quad \text{for } a \in \AF_D^*.\]
\end{definition}

We often omit the dependency in the~$f_d$ and write for simplicity~$S^h(a)$ and~$S(a)=S^1(a)$.
For example, let~$D = \{\bullet, \circ\}$ and~$F_D(\bullet) = f, F_D(\circ) = g$, then,
\begin{align*}
    S_{f,g}(a)[\phi] &= \left(1 + a(\forest{(b)}) \Div(f) + a(\forest{(w)}) \Div(g) + a(\forest{b=b}) \langle f,f\rangle + \cdots \right) \phi \\
    &+ \left(1 + a(\forest{(b),b}) \Div(f) + a(\forest{(w),b}) \Div(g) + a(\forest{b=b,b}) \langle f,f\rangle + \cdots \right) \sum_{i=1}^d f^i \phi_i + \cdots,
\end{align*}
with~$\langle \blank,\blank\rangle$ the standard Euclidean scalar product.

S\nobreakdash-series are used to represent formal sums of differential operators. We use S\nobreakdash-series to represent formal sums of vector fields by using the identification between first order operators and vector fields and by requiring the corresponding functional~$a_0 : \AF_D \to \R$ to satisfy
\[ \supp (a_0) \subset \AT_D. \]
Such series are called B\nobreakdash-series, written~$B^h(a_0)$, and~$a_0$ is called an infinitesimal character. B\nobreakdash-series are used in deterministic numerical analysis~\cite{Hairer06gni} for the representation of numerical integrators $y_1 := \Psi_h (y_0)$ applied to~$y^\prime = f(y)$ as~$\Psi_h(y_0) = y_0 + B^h(a_0)(y_0)$ for an infinitesimal character~$a_0$. This is typically done by Taylor expanding the numerical method with respect to its time stepsize~$h$. Given any smooth function~$\phi: \R^d \to \R$, the composition of~$\Psi_h : \R^d \to \R^d$ with~$\phi$ is represented by the following exponential of B\nobreakdash-series:
\[
\phi \circ \Psi_h =
F_D\Big(\exp^\cdot \big(\delta_\sigma(a_0)\big)\Big)[\phi], \quad \text{with } \exp^\cdot (x) := 1 + \sum_{k=1}^\infty \frac{x^{\cdot k}}{k!}. \]
We use the formalism of aromatic forests and the convolution product~$\odot : \AF_D^* \otimes \AF_D^* \to \AF_D^*$ defined as~$a \odot b := (a \otimes b) \circ \Delta$ with~$\Delta : \AF_D \to \AF_D \otimes \AF_D$ being the deshuffle coproduct (not~$\AA_D$-linear) over decorated aromatic forests to write the exponential of B\nobreakdash-series as an S\nobreakdash-series as
\begin{equation}
\label{eq:B-S-series}
    \phi \circ \Psi_h = S^h(\exp^\odot (a_0))[\phi] \quad \text{with } \exp^\odot (x) := 1 + \sum_{k=1}^\infty \frac{x^{\odot k}}{k!}.
\end{equation}


\subsection{Exotic aromatic S-series}
\label{section:structure_EAF}

Exotic aromatic forests are a specific class of decorated aromatic forests that naturally appears in the study of numerical methods for solving stochastic differential equations with additive noise~\cite{Laurent20eab, Laurent21ocf, Laurent21ata}.
We use decorations to encode paired nodes, called lianas, that allow to represent the Laplacian of a Taylor series in a jet bundle.
The exotic aromatic forests are defined in the following way, where we mention that additional decorations could be added, in the spirit of partitioned S\nobreakdash-series.
\begin{definition}
\label{def:EAF}
An exotic aromatic forest is a decorated aromatic forest~$(\pi, \alpha)\in AF_D$ with the decorations~$D=\{\bullet\}\cup \N$,~$\N=\{1,2,3,\dots\}$, that follows the following rules. All vertices of~$\pi$ are black, except the leaves that can be either black or numbered.
If a natural number is used as a decoration, then it must decorate two leaves, that is,~$|\alpha^{-1}(n)| \in \{0,2\}$ for any~$n \in \N$. Two exotic aromatic forests~$(\pi_1, \alpha_1)$ and~$(\pi_2, \alpha_2)$ are considered to be identical if~$\pi_1 = \pi_2$ and there exists a map~$\varphi \in S_\N$ such that~$\alpha_1 \restrict{\N} = \varphi \circ \alpha_2 \restrict{\N}$. The pair of numbered leaves that correspond to the same number is called a liana, gathered in the set~$L\subset V\times V/\langle(x,y)-(y,x)\rangle$.
The order of an exotic aromatic forest~$\pi$ is the following, where~$V_{\bullet}$ denotes the number of black vertices of~$\pi$,
\[\abs{\pi}=\abs{V_{\bullet}}+\abs{L}-\abs{S},\]
where~$S \subset V \times V /\langle(x,y)-(y,x)\rangle$ is the set of stolons.
\end{definition}

The exotic aromatic trees of order up to two are the following. Note that the order does not coincide with the number of black vertices in general.
\[ \forest{b}, \ \forest{b[b]}, \ \forest{b[1,1]}, \ \forest{(b),b}, \ \forest{(b[1]),1}, \ \forest{b=b,b}, \ \forest{b=b[1],1}. \]
The set of exotic aromatic forests is denoted by~$EAF$. Exotic aromatic forests with one root are called exotic aromatic trees and form a set denoted by~$EAT$. Exotic aromatic forests without a root are called exotic aromas and form a set denoted by~$EA$. The corresponding vector spaces are denoted by~$\EAF$,~$\EAT$ and~$\EA$, respectively.
We refer to Appendix~\ref{section:tables_examples} for further examples.
We emphasize that the order of a forest is never negative.
Note also that the following exotic aromatic forests are identical:
\[    \forest{(b[b[3],1,1]),b[b[2],b[2,3]]} = \forest{(b[b[2],3,3]),b[b[1],b[1,2]]}.\]

To obtain exotic aromatic S\nobreakdash-series, we define the elementary differential map by
\begin{align*}
F_\EE(\pi)[\phi]=\sum_{\underset{v\in V}{i_v=1,\dots,d}}
\delta_{I_{S\cup L}} \left(\prod_{v\in V_{\bullet}} f^{i_v}_{I_{\Pi(v)}} \right) \phi_{I_{R}},
\end{align*}
where~$R$ is the set of roots,~$\Pi(v)$ is the set of predecessors of~$v$, and~$\delta_{I_{S\cup L}}$ identifies the indices of the lianas and stolons, that is,~$i_x=i_y$ if~$(x,y)\in S\cup L$.
For instance, we have
\[
\pi=\forest{(b[1]),b=b[2],b[1],2},
\quad
F_\EE(\pi)[\phi]=\sum_{i,j,s,l_1,l_2=1}^d f^i_{il_1} f^s f^s_{l_2} f^j_{l_1} \phi_{jl_2}.
\]
Given a coefficient map~$a \in \EAF^*$, an exotic aromatic S\nobreakdash-series is the following formal series (see Definition~\ref{definition:S-series}),
\[
S^h(a) := \sum_{\pi \in EAF} h^{\abs{\pi}}\frac{a(\pi)}{\sigma(\pi)} F_{\mathcal{E}} (\pi), \quad S(a)=S^1(a),
\]
where the symmetry coefficient~$\sigma(\pi)$ counts the number of graph automorphisms leaving~$\pi$ unchanged.
The S\nobreakdash-series formalism allows us to rewrite the tedious combinatorics of numerical analysis in terms of simple graph operations that do not involve the dimension of the problem.

Theorems~\ref{theorem:exotic_composition_law} and~\ref{theorem:exotic_substitution_law} present the composition and substitution laws of exotic aromatic S\nobreakdash-series used in Section~\ref{section:numerical_analysis_application} to describe the composition of numerical methods, the construction of the modified equation of a method, and the backward error analysis. The proofs are derived straightforwardly from the analysis of Section~\ref{sec:algebraic_structure}.

\begin{definition}
\label{def:BCK}
\cite{Bronasco22ebs}
	The Butcher-Connes-Kreimer coproduct on~$\EAF$ is defined as
    \[ \Delta_{BCK} (\pi) := \sum_{\pi_0 \subset \pi} (\pi \setminus \pi_0) \otimes \pi_0, \]
	where the sum runs over all rooted subforests~$\pi_0 \in EAF$ of~$\pi$ such that~$\pi \setminus \pi_0 \in EAF$ and there are no edges going from~$\pi_0$ to~$\pi \setminus \pi_0$ in~$\pi$. We note that the paired number vertices cannot be separated across different sides of the tensor product.
\end{definition}

\begin{ex}
\label{ex:example_composition}
For example,
\begin{align*}
    \Delta_{BCK} (\forest{(b[1]),b[1,b]}) &= \mathbf{1} \otimes \forest{(b[1]),b[1,b]} + \forest{1,1} \otimes \forest{(b),b[b]} + \forest{b} \otimes \forest{(b[1]),b[1]} + \forest{(b[1]),1} \otimes \forest{b[b]} + \\
    &\quad \forest{1,1,b} \otimes \forest{(b),b} + \forest{(b[1]),1,b} \otimes \forest{b} + \forest{1,b[1,b]} \otimes \forest{(b)} + \forest{(b[1]),b[1,b]} \otimes \mathbf{1}, \\
    \Delta_{BCK} (\forest{b[1,1,2,b[2]]}) &= \mathbf{1} \otimes \forest{b[1,1,2,b[2]]} + \forest{1,1} \otimes \forest{b[2,b[2]]} + \forest{2,2} \otimes \forest{b[1,1,b]} + \forest{1,1,2,2} \otimes \forest{b[b]} + \\
    &\quad \forest{2,b[2]} \otimes \forest{b[1,1]} + \forest{1,1,2,b[2]} \otimes \forest{b} + \forest{b[1,1,2,b[2]]} \otimes \mathbf{1}.
\end{align*}
We present in Appendix~\ref{section:tables_examples} the values of~$\Delta_{BCK}$ over all connected forests and exotic aromas up to order~$3$.
\end{ex}

\begin{theorem}[Composition law]\label{theorem:exotic_composition_law}\cite{Bronasco22ebs}
    Let~$S(a)$ and~$S(b)$ be two exotic aromatic S\nobreakdash-series and let~$\phi$ be a test function. Then,
    \[ S(a)[S(b)[\phi]] = S(a \ast  b)[\phi], \quad \text{with } a \ast  b = (a \otimes b) \circ \Delta_{BCK}, \]
    with~$\Delta_{BCK}$ being the Butcher-Connes-Kreimer coproduct over exotic aromatic forests.
\end{theorem}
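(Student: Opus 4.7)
The plan is to establish the identity by Taylor-expanding both sides in elementary differentials and matching them term-by-term, with the decorated aromatic S\nobreakdash-series composition law developed in Section~\ref{sec:algebraic_structure} serving as the central step. First I would expand the left-hand side as
\[ S(a)[S(b)[\phi]] = \sum_{\pi_1,\pi_2 \in EAF} h^{\abs{\pi_1}+\abs{\pi_2}} \frac{a(\pi_1)\, b(\pi_2)}{\sigma(\pi_1)\sigma(\pi_2)} F_\EE(\pi_1)\bigl[F_\EE(\pi_2)[\phi]\bigr], \]
and analyse the inner bracket by iterating Leibniz's rule through the differentiations at the roots of $\pi_1$, which hit the vector-field factors of $\pi_2$ as well as the test-function factor $\phi_{I_R}$.

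Next, I would argue that each term in the resulting Leibniz expansion corresponds to a choice of attachment of the root-edges of $\pi_1$ onto vertices of $\pi_2$, and that the ensuing differential operator is the elementary differential of an exotic aromatic forest $\pi$ which contains $\pi_2$ as a rooted subforest $\pi_0$ with complement $\pi \setminus \pi_0 \simeq \pi_1$. A crucial observation is that the index identifications corresponding to lianas and stolons arise from the internal structure of $F_\EE(\pi_1)$ and $F_\EE(\pi_2)$ separately and so cannot be split by the differentiation step; this is precisely the admissibility condition in Definition~\ref{def:BCK}. Reorganising the double sum by indexing over the resulting $\pi$ together with an admissible $\pi_0 \subset \pi$, and matching the prefactors via an orbit-stabiliser count, reassembles the right-hand side as $S((a \otimes b) \circ \Delta_{BCK})[\phi]$.

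The main obstacle I anticipate is the careful bookkeeping of symmetry factors $\sigma$ in the presence of aromas, lianas, and stolons, since each of these structures contributes nontrivially to the automorphism group and must be preserved under any admissible cut. I would circumvent this combinatorial check by working first in the category of decorated aromatic forests $AF_D$ with $D = \{\bullet\} \cup \N$, where the composition law is the classical decorated Butcher--Connes--Kreimer law established in Section~\ref{sec:algebraic_structure}, and then descending to $EAF$ via the quotient by the $S_\N$-action on numbered decorations. The liana constraint in Definition~\ref{def:BCK} is exactly what makes this quotient compatible with the BCK coproduct, and the composition identity thus transfers from the decorated level to the exotic level without additional work.
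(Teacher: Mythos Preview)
Your proposal is correct and lands on essentially the same argument the paper uses: reduce to the decorated-aromatic setting of Section~\ref{sec:algebraic_structure}, where the composition law follows from the D-algebra structure (the elementary differential map is a D-algebra morphism into Example~\ref{example:D-algebra_vector_fields}, the Grossman-Larson product is composition of operators, and Grossman-Larson is dual to Butcher--Connes--Kreimer), and then pass to exotic aromatic forests. One terminological point worth correcting: in the paper's construction, $\EAF$ is realised as a \emph{sub}-D-algebra of $\overline{\AF}_{\bullet\N}$ via equation~\eqref{eq:EAF_def} (each exotic forest is the $S_\N$-symmetrised sum of its decorated representatives), not as a quotient by the $S_\N$-action; the BCK coproduct restricts to this subspace because it is $S_\N$-equivariant on decorations, which is the same content as your admissibility observation about lianas not being split.
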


We define an extension~$\CEFone$ of the concept of decorated clumped forests to the exotic context as~$\CEFone := S(\EAT)$. Some elements of~$\CEFone$ are
\[ \forest{(b),b[1,1]} \cdot \forest{(b[2]),b[2]}, \quad \forest{(b[3],b[3]),b[b[b,4],4]} \cdot \forest{b=b,b[b]}. \]
Note that exotic aromatic trees are a subset of primitive elements of~$\EAF$ with the full set of primitive elements being discussed in Section~\ref{sec:exotic_algebraic_structure}.

We consider a particular case of the substitution law from Theorem~\ref{thm:substitution_law} in which the black vertices are substituted by~$\delta_\sigma(b_0) \in \overline\EAT$ while the numbered vertices are not changed, that is, a number vertex~$\forest{i_n}$ is substituted by~$\forest{i_n}$ for~$n \in \N$. This allows us to simplify the definition of the CEM coaction.

\begin{definition}
\label{def:exotic_CEM_coaction}
Let the CEM coaction~$\Delta_{CEM} : \EAF \to \CEFone \otimes \EAF$ over exotic aromatic forests be defined as 
\[ \Delta_{CEM} (\pi) := \sum_{p \subset \pi} p \otimes \pi /_p, \]
where the sum is over all clumped exotic subforests~$p \in \CEFone$ that cover all black vertices and~$\pi /_p$ is the exotic aromatic forest obtained by contracting the exotic aromatic trees of~$p$ into black vertices. If the forest~$\pi \in \EAF$ doesn't have valid subforests~$p \in \CEFone$, then~$\Delta_{CEM} (\pi) = \one \otimes \pi$.
For details see the proof of Theorem~\ref{thm:CEF_Hopf}.
\end{definition}

\begin{ex}
The substitution on exotic aromatic forests has additional difficulties compared to the deterministic context.
While the coaction~$\Delta_{CEM}$ works with a similar idea to the partitions of trees as in~\cite[Thm.\ts 3.2]{Chartier10aso}, it ensures that paired vertices stay in the same component and it sums over all possible combinations of clumped exotic forests in~$\CEFone$ (and not exotic aromatic forests) in the left side of the tensor product.
For instance, we have
\begin{align*}
  \Delta_{CEM} (\forest{(b[1]),b[1,b]}) &= \forest{(b[1]),b[1,b]} \otimes \forest{b} + \forest{(b[1]),1,b[b]} \otimes \forest{b[b]} + \forest{(b),b[b]} \otimes \forest{b[1,1]} + \forest{b,b[b]} \otimes \forest{(b[1]),b[1]} \\
  &+\forest{(b[1]),b[1],b} \otimes \forest{b[b]} + \forest{(b[1]),1,b,b} \otimes \forest{b[b,b]} + \forest{(b),b} \cdot \forest{b} \otimes \forest{b[1,1,b]} + \forest{b,b,b} \otimes \forest{(b[1]),b[1,b]}, \\
  \Delta_{CEM} (\forest{b[1,1,2,b[2]]}) &= \forest{b[1,1,2,b[2]]} \otimes \forest{b} + \forest{b[b]} \otimes \forest{b[1,1,2,2]} + \forest{b[1,1,b]} \otimes \forest{b[2,2]} \\
  &+ \forest{b[2,b[2]]} \otimes \forest{b[1,1]} + \forest{b[1,1],b} \otimes \forest{b[2,b[2]]} + \forest{b,b} \otimes \forest{b[1,1,2,b[2]]}.
\end{align*}
We present in Appendix~\ref{section:tables_examples} the values of~$\Delta_{CEM}$ over the elements of~$\EA$ and~$\PEAF$ up to order~$3$.
\end{ex}

We use the discussion from Section~\ref{sec:substitution_law} to obtain Theorem~\ref{theorem:exotic_substitution_law}.
\begin{theorem}[Substitution law]\label{theorem:exotic_substitution_law}
    Let~$a \in \EAF^*,~$~$b_0 \in \EAT^*$, then,
    \[ S_{S_f(b_0)} (a) = S_f (b_c \star a), \quad \text{with } b_c \star a = (b_c \otimes a) \circ \Delta_{CEM}, \]
    where~$b_c$ is the character of~$\CEFone$ that extends~$b_0$ and~$\Delta_{CEM}$ is defined in Definition~\ref{def:exotic_CEM_coaction}.
\end{theorem}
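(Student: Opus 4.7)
The plan is to derive the theorem as a specialization of the general substitution law Theorem~\ref{thm:substitution_law} from Section~\ref{sec:substitution_law}, which is formulated for decorated aromatic S\nobreakdash-series over an arbitrary decoration set $D$ and handles the substitution of a separate S\nobreakdash-series at each decoration label. I would apply that result with $D = \{\bullet\} \cup \N$, taking the substituting S\nobreakdash-series at $\bullet$ to be $S_f(b_0)$ and the substituting series at each numbered label $n \in \N$ to act as the identity on the corresponding number-leaf. This expresses exactly the intended operation: only the vector field carried by black vertices is perturbed, while the liana and stolon structure of each $\pi \in EAF$ in the support of $a$ is preserved.

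My next step is to verify that, under this specialization, the general CEM coaction of the decorated setting collapses to Definition~\ref{def:exotic_CEM_coaction}. Because the substitution at each numbered label is the identity, the admissible clumped subforests $p$ appearing in the general image of $\Delta_{CEM}$ cannot swallow numbered leaves, so $p$ must cover exactly the black vertices of $\pi$. Contracting each tree of $p$ into a single black vertex leaves the numbered leaves intact and preserves lianas as edges between number-leaves of $\pi/_p$; were a liana pair split across different components of $p$, the contracted graph $\pi/_p$ would fail to be a well-defined exotic aromatic forest, so such terms are excluded automatically. The character $b_c$ extending $b_0$ to $\CEFone = S(\EAT)$ emerges from the general construction because the substitution at each tree of $p$ multiplies independently, and the aromas born from the distinct trees attached to a common root-component of $\pi/_p$ must remain bundled with their originating vertex, which is the defining feature of the clumping product.

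The main obstacle I anticipate is the bookkeeping of symmetry factors. The Leibniz expansion of $S_{S_f(b_0)}(a)$ produces, at each black vertex of each forest $\pi/_p$ in the support of $a$, a sum over choices of substituted trees and attachments of their predecessors to vertices of those trees; reindexing this double sum as a sum over pairs $(\pi, p)$ requires the ratio of $\sigma(\pi/_p)$, $\sigma(p)$, $\sigma(\pi)$ to match the multiplicity produced by admissible Leibniz attachments. This is exactly where clumping rather than naive aromatic multiplication becomes essential, since aromas generated at different substituted trees within the same root-component of $\pi/_p$ must be distinguished by their originating vertex, and this distinction is recorded by $\CEFone = S(\EAT)$ but lost in $\EAF = S_\AA(\EAT)$. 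Once these symmetry identities are reconciled using the Hopf algebroid structure developed in Section~\ref{sec:algebraic_structure} together with the isomorphism $\delta_\sigma$, the identity $S_{S_f(b_0)}(a) = S_f(b_c \star a)$ follows from the general substitution law.
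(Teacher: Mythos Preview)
Your proposal is correct and follows essentially the same route as the paper: the exotic substitution law is obtained by specializing the general decorated substitution law (Theorem~\ref{thm:substitution_law}) to $D = \{\bullet\} \cup \N$, with $b_0$ substituted at $\bullet$ and the identity substituted at each numbered label, and the paper implements this identity-on-numbers specialization by taking the quotient of $B_{CEM}$ by the ideal $\JJ = \langle (\one - \forest{i_k})\iota_{\forest{i_k}},\ \pi\iota_{\forest{i_k}} : \pi \notin \{\one,\forest{i_k}\},\ k \in \N\rangle$ in the proof of Theorem~\ref{thm:CEF_Hopf}, which forces clumped subforests to cover exactly the black vertices and recovers Definition~\ref{def:exotic_CEM_coaction}. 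Your concern about symmetry-factor bookkeeping is legitimate but already absorbed by Proposition~\ref{prop:subs_cemdual} and the isomorphism $\delta_\sigma$, so no separate argument is needed there.
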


Theorem~\ref{thm:CEF_Hopf} provides a simplified procedure to compute the substitution law for exotic aromatic S\nobreakdash-series using the Hopf algebra structure of~$CEF$.


\section{Backward order analysis and modified equations for stochastic differential equations}
\label{section:numerical_analysis_application}

In the numerical analysis of ordinary differential equations, the composition and substitution of B\nobreakdash-series have a variety of applications, including the derivation of order conditions, the composition of numerical methods, backward error analysis, or high-order integration based on modified equations (see~\cite{Chartier10aso} and references therein).
The aromatic B\nobreakdash-series appear naturally in the study of volume-preserving integrators and we refer to~\cite{Chartier07pfi,Iserles07bsm,Bogfjellmo19aso} for details.
The stolons appear when studying projection methods on embedded manifolds, while the lianas are used in stochastic numerical analysis (see~\cite{Laurent21ata}).
In this section, we focus on the applications of the composition and substitution laws for the numerical approximation of stochastic differential equations. As described in the original references~\cite{Laurent20eab,Laurent21ocf,Bronasco22ebs}, exotic aromatic B\nobreakdash-series and S\nobreakdash-series are a crucial calculation tool for the high-order approximation of SDEs in the weak context and for the invariant measure. There are several major differences with the deterministic context, especially on the concept of backward error analysis.
Most importantly, our analysis gives an explicit expression as an exotic B\nobreakdash-series of the modified vector field for the backward error analysis and for using modified equations for the invariant measure in~$\R^d$ (see Theorem~\ref{theorem:BEA_Rd} and Theorem~\ref{theorem:modified_equation_Rd}).

\subsection{Stochastic order theory with exotic aromatic S-series}

We consider stochastic differential equations with additive noise on the~$d$-dimensional torus $\MM=\T^d$ or on smooth compact manifolds~$\MM=\{x\in\R^d,\zeta(x)=0\}$ of codimension~$1$ with a smooth constraint~$\zeta\colon\R^d\to\R$ of the form
\begin{equation}
\label{equation:Langevin}
dX(t)=\Pi_\MM(X(t)) f(X(t))dt +\Pi_\MM(X(t)) \circ dW(t),\quad X(0)=x\in \MM,
\end{equation}
where~$x$ is assumed deterministic for simplicity,~$\Pi_\MM\colon\R^d\to\R^{d\times d}$ is the orthogonal projection on the tangent bundle of the manifold~$\MM$,~$f$ is a smooth vector field,~$W$ is a standard~$d$-dimensional Brownian motion on a probability space equipped with a filtration~$(\FF_t)_{t>0}$ and fulfilling the standard assumptions.
Dynamics of the form~\eqref{equation:Langevin} include overdamped Langevin dynamics~\cite{Lelievre10fec} when~$f=-\nabla V$ for~$V$ a smooth potential.
If~$\MM=\T^d$, the orthogonal projection is~$\Pi_\MM=I_d$ and equation~\eqref{equation:Langevin} reduces to
$$
dX(t)= f(X(t))dt + dW(t).
$$

There are three main types of approximations of~\eqref{equation:Langevin}. A strong approximation focuses on approaching~$X(t)$ for a given realisation of~$W$. A weak approximation approaches averages of functionals of the solution~$\E[\phi(X(t))]$ where~$\phi\in \CC^\infty(\NN,\R)$ is a smooth test function defined on an open neighbourhood~$\NN$ of~$\MM$.
An approximation for the invariant measure approximates averages of functionals at the equilibrium for ergodic systems.
In this paper, we focus on the algebraic calculation underlying the high order approximation of~\eqref{equation:Langevin} in the weak sense and for the invariant measure. We give the definitions of order in terms of S\nobreakdash-series directly and we refer to the textbooks~\cite{Milstein04snf,Lelievre10fec,Abdulle14hon} for the detailed analysis on non-compact manifolds.

The generator of equation~\eqref{equation:Langevin} in~$\T^d$ can be represented as an exotic S\nobreakdash-series~$S(l)$ with coefficient map~$l\in \EF^*$. It is given by the following linear combination of the primitive elements of order one:
\[\delta_\sigma(l)=\forest{b}+\frac{1}{2}\forest{1,1},\quad
\LL\phi=S(l)[\phi]=\phi'f
+\frac{1}{2}\Delta \phi,\]
where we recall that~$\delta_\sigma$ is given by
\[ \delta_\sigma (a) = \sum_{\pi \in \EE\AA\FF} \frac{a(\pi)}{\sigma(\pi)} \pi. \]
In the manifold case, we add a new decoration~$\circ$ to~$\EE\AA\FF$ that represents the gradient of the constraint~$n=\nabla \zeta\colon\R^d\rightarrow\R^d$ and we extend the elementary differential with~$F_{\EE}(\circ)=n$.
The definition of the S\nobreakdash-series is
\[
S^h(a)=\sum_{\pi\in \EE\AA\FF} h^{\abs{\pi}}\frac{a(\pi)}{\sigma(\pi)\abs{n}^{\abs{V_\circ}}} F_{\EE}(\pi),\quad
\abs{\pi}=\abs{V_{\bullet}}+\frac{\abs{V_{\circ}}}{2}+\abs{L}-\abs{S},
\]
and the generator becomes
\begin{align*}
\delta_\sigma(l)&=\forest{b}
-\forest{w=b,w}
-\frac{1}{2}\forest{(w),w}
+\frac{1}{2}\forest{w=w[w],w}
+\frac{1}{2}\forest{1,1}
-\frac{1}{2}\forest{w,w},\\
\LL\phi&=S(l)[\phi]
=\phi'f
-\abs{n}^{-2}\langle n,f\rangle \phi'n
-\frac{1}{2}\abs{n}^{-2}\Div(n)\phi'n \\
&+\frac{1}{2}\abs{n}^{-4}\langle n,n'n\rangle \phi'n
+\frac{1}{2}\Delta \phi
-\frac{1}{2}\abs{n}^{-2}\phi''(n,n).
\end{align*}
The quantity of interest in the weak context is~$u(t,x)=\E[\phi(X(t))|X(0)=x]$. It satisfies the following formal expansion in a neighbourhood of~$\MM$, derived from the backward Kolmogorov equation (see, for instance,~\cite{Hasminskii80sso, Faou09csd, Abdulle14hon, Kopec15wbea, Kopec15wbeb,Laurent21ata}),
\[u(x,h) = \exp(h\LL)[\phi](x)=S^h(e)[\phi](x), \quad e=\exp^\ast (l):=\sum_{n=0}^\infty \frac{1}{n!} l^{\ast n},\]
where we recall that~$\ast$ is the composition law (see Theorem~\ref{theorem:exotic_composition_law}).
The first terms of~$e$ in~$\T^d$ are
\begin{equation}
\label{equation:exact_flow_S_Series}
\delta_\sigma(e)=\mathbf{1}
+\forest{b}
+\frac{1}{2}\forest{1,1}
+\frac{1}{2}\forest{b[b]}
+\frac{1}{2}\forest{b,b}
+\frac{1}{2}\forest{b,1,1}
+\frac{1}{4}\forest{b[1,1]}
+\frac{1}{2}\forest{b[1],1}
+\frac{1}{8}\forest{1,1,2,2}
+\dots
\end{equation}
In opposition to the deterministic formalism of Butcher trees, the S\nobreakdash-series of the exact flow is not the exponential of a combination of trees, but the exponential of a combination of forests, as the primitive elements of exotic forests do not reduce to exotic trees:
\[
\ET \subsetneq \Prim(\EF), \quad \TT=\Prim(\FF).
\]
This major difference with deterministic geometric numerical integration makes, in particular, backward error analysis much more tedious in the stochastic case, as we rely on an additional non-trivial operation, similar to the integration by parts, that transforms primitive elements into exotic trees.
In the manifold case, the first terms of~$S(e)$ can be found in~\cite{Laurent21ocf}.
Consider now a one-step integrator for solving~\eqref{equation:Langevin} of the form
\begin{equation}
\label{equation:integrator}
X_{n+1}=\Phi_h(X_n), \quad X_0=x\in \MM,
\end{equation}
where~$h$ is the stepsize of the method and the dependence in random variables is omitted for clarity.
We assume that the method~\eqref{equation:integrator} is an exotic aromatic S\nobreakdash-series method, that is, the numerical analogue of~$u(x,h)$ satisfies the following formal Talay-Tubaro expansion~\cite{Talay90eot} in a neighbourhood of~$\MM$:
\[\E[\phi(X_1)|X_0=x] = S^h(a)[\phi](x).\]
For instance, the following class of stochastic Runge-Kutta integrators presented in~\cite{Laurent21ata} satisfy this assumption naturally in~$\T^d$:
\begin{align}
  \label{equation:SRK}
  Y_n^i&=X_n+h\sum \limits_{j=1}^s a_{i,j}f(Y_n^j) +\sqrt{h}d_i \xi_n , \qquad i = 1, \dots ,s,\\
  X_{n+1}&=X_n+h\sum \limits_{i=1}^s b_i f(Y_n^i) +\sqrt{h}\xi_n, \nonumber
\end{align}
where we use one Gaussian vector~$\xi_n\sim \NN(0,I_d)$ at each step for simplicity.
Thanks to the result~\cite[Prop.\ts 4.3]{Bronasco22ebs} (see also~\cite{Hairer06gni,Rossler06rta}), the exotic S\nobreakdash-series~$S(a)$ of methods of the form~\eqref{equation:SRK} is given by the following coefficient map,
\[
\tau=\forest{b_i_270[b_j_180[1,2],1],2,b_k_270[b_l_90,b_m_90]}
\qquad
a(\tau)=\sum_{i,j,k,l,m=1}^s b_i a_{i,j} d_j^2 d_i b_k a_{k,l} a_{k,m},
\]
where the decoration of~$\tau$ by~$\{i,j,k,l,m\}$ is added for illustration purposes.
The coefficient maps~$a$ of the exact flow and of stochastic Runge-Kutta methods~\eqref{equation:SRK} are naturally characters over~$\EF$ equipped with the concatenation product, that is,~$a$ satisfies
\[
a(\pi_1\cdot \pi_2)=a(\pi_1)a(\pi_2), \quad \pi_1, \pi_2\in \EF.
\]
We denote~$\Char(\SS)$ the set of characters over a subset~$\SS$ of~$\EF$ equipped with the concatenation product.

The standard definition of weak order~\cite{Talay90eot,Milstein04snf} rewrites in the following way with exotic aromatic S\nobreakdash-series.
\begin{definition}
Let an integrator~\eqref{equation:integrator} with S\nobreakdash-series~$S^h(a)$ such that~$a-e\in \EAF^*_{\geq p+1}$, i.e., if~$a$ and~$e$ coincide on all exotic aromatic forests of order up to~$p$. Then, the integrator has (at least) weak order~$p$ for solving~\eqref{equation:Langevin}, that is, for~$T>0$ and~$h\leq h_0$ small enough with~$Nh=T$, for the initial condition~$X_0\in\MM$, for all test functions~$\phi\in\CC^\infty(\NN,\R)$, the following estimate holds
\[
\abs{\E[\phi(X_n)]-\E[\phi(X(nh))]}\leq C(h_0,T,\phi)h^p,\quad n=0,\dots,N.
\]
An integrator of at least weak order one is called consistent.
\end{definition}

\begin{ex}
Consider for instance the Euler-Maruyama method in~$\T^d$:
\begin{equation}
\label{equation:EM}
X_{n+1}=X_n+hf(X_n)+\sqrt{h}\xi_n, \quad \xi_n\sim \NN(0,I_d).
\end{equation}
The exotic aromatic S\nobreakdash-series of the Euler-Maruyama method is given by
\emph{\begin{equation}
\label{equation:example_EM_S_Series}
\exp^\cdot(\delta_\sigma(l))=\delta_\sigma(\exp^\odot(l))
=\mathbf{1}
+\forest{b}
+\frac{1}{2}\forest{1,1}
+\frac{1}{2}\forest{b,b}
+\frac{1}{2}\forest{b,1,1}
+\frac{1}{8}\forest{1,1,2,2}
+\dots,
\end{equation}}
where the convolution product is~$a \odot b := (a \otimes b) \circ \Delta$ with~$\Delta$ the deshuffle coproduct over exotic aromatic forests (see Section~\ref{sec:S-series}).
Comparing the S\nobreakdash-series \eqref{equation:exact_flow_S_Series} and~\eqref{equation:example_EM_S_Series} shows that, under the standard regularity assumptions~\cite{Talay90eot,Milstein04snf,Abdulle14hon}, the method is of weak order one.
\end{ex}

The overdamped Langevin dynamics~\eqref{equation:Langevin} with~$f=-\nabla V$ are ergodic (see, for instance, the works~\cite{Lelievre10fec,Debussche12wbe,Abdulle14hon}), that is, there exists a unique invariant measure~$d\mu_\infty$ on~$\MM$ that has a probability density~$\rho_\infty$ with respect to~$d\sigma_\MM$, the canonical measure on~$\MM$ induced by the Euclidean metric of~$\R^d$, such that for all test functions~$\phi$,
\[\lim_{T\to\infty}\frac{1}{T}\int_0^T \phi(X(t)) dt= \int_{\MM} \phi(x) d\mu_\infty(x)\quad \text{almost surely}, \quad \rho_\infty(x) \propto \exp\left(-2V(x)\right).\]
The invariant measure is the unique solution to~$\LL^* d\mu_\infty=0$.
Similarly, a consistent integrator~\eqref{equation:integrator} is ergodic if there exists a unique probability measure~$d\mu^h$ that is absolutely continuous w.r.t.\ts the measure~$d\sigma_\MM$ such that for all test functions~$\phi\in\CC^\infty(\NN,\R)$ and all initial condition~$X_0\in\MM$,
\[
\lim_{N\rightarrow\infty}\frac{1}{N+1}\sum_{n=0}^N \phi(X_n)=\int_\MM \phi(x)d\mu^h(x).
\]
We refer for instance to~\cite{Debussche12wbe} for appropriate assumptions to obtain the ergodicity of the numerical scheme.
We take over the equivalence relation~$\sim$ on exotic aromatic S\nobreakdash-series and their dual defined in~\cite{Laurent20eab,Laurent21ocf,Bronasco22ebs}.
This relation is called the integration by parts as
\[
a\sim b \quad \Rightarrow \quad \int_{\MM} S^h(a)[\phi](x) d\mu_\infty(x)=\int_{\MM} S^h(b)[\phi](x) d\mu_\infty(x),
\]
and involves detaching and grafting back edges. The integration by parts allows one to transform primitive elements into trees.
We give examples and refer to~\cite{Laurent20eab,Laurent21ocf,Bronasco22ebs} for the detailed definition:
\[
\forest{1,1} \sim -2\forest{b}, \quad
\forest{b[1],1} \sim -\forest{b[1,1]} -2\forest{b[b]}, \quad
\forest{b[1,b],1} \sim -\forest{b[1,1,b]}-\forest{b[1,b[1]]}-2\forest{b[b,b]}, \quad
\forest{b[1,2],1,2}\sim -\forest{b[1,2,2],1}-2\forest{b[1,b],1}
.
\]
The approximation for the invariant measure is defined algebraically in the following way. We shall see that it can also be understood as the stochastic extension of pseudo-volume-preserving approximations.
\begin{definition}
Consider a consistent ergodic integrator~\eqref{equation:integrator} with S\nobreakdash-series~$S^h(a)$ and invariant measure~$d\mu^h$.
Then, if~$a-\delta_{\mathbf{1}}\sim \EAF_{\geq p+1}^*$, the integrator has (at least) order~$p$ for sampling the invariant measure of~\eqref{equation:Langevin}, that is, for~$h\leq h_0$ small enough and~$\phi\in\CC^\infty(\NN,\R)$, the following estimate holds,
\[
\abs{\int_\MM \phi(x)d\mu^h(x)-\int_\MM \phi(x)d\mu_\infty(x)}\leq Ch^p.
\]
\end{definition}
An integrator of weak order~$p$ immediately has at least order~$p$ for the invariant measure as~$e\sim \delta_{\mathbf{1}}$, but there exist methods with high order for the invariant measure and weak order one (see, for instance, for the underdamped and overdamped Langevin equation~\cite{BouRabee10lra, Leimkuhler13rco, Leimkuhler16tco, Abdulle14hon, Abdulle15lta, Laurent20eab, Laurent21ocf}).
An important result for our analysis in the~$\T^d$ case with a gradient vector field~$f=-\nabla V$ is in~\cite[Thm.\ts 5.8]{Bronasco22ebs}. We mention that the assumption of this result is naturally satisfied for the stochastic Runge-Kutta integrators~\eqref{equation:SRK}.
\begin{proposition}[\cite{Laurent20eab,Bronasco22ebs}]
\label{proposition:prop_character_IBP}
There exists an algorithm that transforms~$a_c\in \Char(\EF)$ over exotic forests into an equivalent character~$a_c\circ A^*\in \Char(S(\ET))/{\sim}$ over the symmetric algebra spanned by exotic trees. This defines a map~$A(.)=. \circ A^*\colon \Char(\EF)\rightarrow \Char(S(\ET))/{\sim}$ such that~$a_c\sim A(a_c)$. Moreover,~$A$ naturally induces a map~$A\colon \Prim(\EF)^* \rightarrow \ET^*/{\sim}$ satisfying the identity~$A|_{\ET^*}=id$.
\end{proposition}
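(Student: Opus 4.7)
The plan is to construct an integration-by-parts reduction $\rho: \EF \to S(\ET)$ satisfying $\pi \sim \rho(\pi)$ for every $\pi \in \EF$, after which $A^*: S(\ET) \hookrightarrow \EF$ is taken to be the natural inclusion and $A(a_c) := a_c \circ A^*$ is simply the restriction of $a_c$ to the subalgebra of products of trees. The elementary IBP step is precisely the $\sim$-relation of \cite{Laurent20eab, Laurent21ocf, Bronasco22ebs}, illustrated by the identities $\forest{1,1} \sim -2\forest{b}$, $\forest{b[1],1} \sim -\forest{b[1,1]}-2\forest{b[b]}$, and $\forest{b[1,b],1}\sim -\forest{b[1,1,b]}-\forest{b[1,b[1]]}-2\forest{b[b,b]}$: given a \emph{bridging} liana whose endpoints $v,v'$ lie in distinct tree-components $T_1, T_2$, the step detaches the edge incident to $v$ and re-grafts it at every vertex of $T_1 \cup T_2$ in the prescribed signed combination, producing a sum of forests in which $T_1$ and $T_2$ are fused, or in which the liana has collapsed to an ordinary edge.

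Next, I would prove termination of the iterated reduction. Define $n_b(\pi)$ as the number of bridging lianas in $\pi$. Each elementary step strictly decreases $n_b$: the targeted liana either collapses to a tree-edge or becomes an intra-tree liana in the newly fused component, and since the step merges tree-components and never splits them, no new bridging liana can appear elsewhere. After finitely many iterations $n_b = 0$, every liana sits inside a single tree, and the output lies in $S(\ET)$. Different choices of reduction order yield outputs that are both $\sim$-equivalent to the input $\pi$, and therefore to each other, so $\rho(\pi) \in S(\ET)$ is well-defined modulo $\sim$.

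Then I would assemble the algebraic consequences. The inclusion $A^*: S(\ET) \hookrightarrow \EF$ is a morphism of commutative concatenation-algebras, so $A(a_c) := a_c \circ A^*$ is a character on $S(\ET)$, and for every $\pi \in \EF$ one has $a_c(\pi) \sim a_c(\rho(\pi)) = A(a_c)(\rho(\pi))$, which establishes $a_c \sim A(a_c)$. Using the Milnor--Moore identifications $\Char(\EF) \cong \Prim(\EF)^*$ and $\Char(S(\ET)) \cong \ET^*$, the map descends to $A: \Prim(\EF)^* \to \ET^*/{\sim}$; and for any $\tau \in \ET$ we have $n_b(\tau) = 0$, so $\rho(\tau) = \tau$ and hence $A|_{\ET^*} = \id$.

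The principal obstacle is verifying that the elementary IBP step genuinely decreases $n_b$ in every configuration, since the signed re-grafting produces a sum of many forests and one must check the bookkeeping on each summand. I expect this to follow from the explicit grafting formulas in \cite{Laurent20eab}: re-grafting an incoming edge onto a vertex of one of the two merged tree-components only alters parent--child relations within the newly fused tree, leaving all lianas foreign to the reduction untouched. With termination in hand, the well-posedness of $\rho$ modulo $\sim$ is automatic by the confluence remark above, and the remainder of the proof is routine character-theoretic bookkeeping transported through $A^*$.
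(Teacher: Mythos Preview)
The paper does not give its own proof here (it defers to \cite{Laurent20eab,Bronasco22ebs} and only records that ``the operator $A$ is called IBP+ELI''), so the comparison is against how $A$ is actually used downstream. Your construction of an IBP reduction $\rho:\EF\to S(\ET)$ with $\pi\sim\rho(\pi)$ is the right starting point, but the identification of $A^*$ with the inclusion $S(\ET)\hookrightarrow\EF$ is wrong and breaks the argument. In the proof of Theorem~\ref{theorem:BEA_Rd} the paper uses $A^*(\bullet)=\bullet-\forest{1,1}$, which is manifestly not the image of $\bullet$ under the inclusion. If $A(a_c)$ were the mere restriction $a_c|_{S(\ET)}$, then $S(A(a_c))$ would simply discard every term of $S(a_c)$ supported outside $S(\ET)$ rather than convert it, and $a_c\sim A(a_c)$ would fail: take any character with $a_c(\forest{1,1})=1$ and $a_c(\bullet)=0$; your recipe gives $A(a_c)(\bullet)=0$, whereas $\forest{1,1}\sim-2\,\forest{b}$ forces $A(a_c)(\bullet)=-1$.

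What is missing is the correct dualisation: $A^*$ must be the $\langle\blank,\blank\rangle_\sigma$-adjoint of your reduction $\rho$, so that, writing $\rho(\pi)=\sum_\tau c_{\pi,\tau}\tau$, one has $A^*(\tau)=\sum_{\pi}\sigma(\tau)c_{\pi,\tau}\sigma(\pi)^{-1}\pi$ and $A(a_c)(\tau)$ collects the contributions of \emph{all} forests whose IBP reduction hits $\tau$. With this corrected $A^*$ the equivalence follows from $F(\pi)\sim F(\rho(\pi))$ via $S(a_c)=\sum_\pi\frac{a_c(\pi)}{\sigma(\pi)}F(\pi)\sim\sum_\pi\frac{a_c(\pi)}{\sigma(\pi)}F(\rho(\pi))=S(A(a_c))$, and $A|_{\ET^*}=\id$ holds because $\rho|_{\ET}=\id$ while every other $\pi$ contributing to $A^*(\tau)$ lies in $\Prim(\EF)\setminus\ET$. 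Your sentence ``$a_c(\pi)\sim a_c(\rho(\pi))$'' is also ill-formed as written, since these are real numbers and $\sim$ is a relation on S-series; it should be replaced by the chain above once $A^*$ is fixed.
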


The precise algorithm for computing~$A$ is omitted for simplicity as it can be found in~\cite{Laurent20eab,Bronasco22ebs}.
The operator~$A$ is called IBP+ELI in~\cite{Bronasco22ebs} and it shares similarities with the horizontal homotopy operator presented in~\cite{Laurent23tab} (see also~\cite{Anderson89tvb, Anderson92itt}). We mention that the map~$A$ extends to the manifold case (see~\cite{Laurent21ocf}).

\begin{remark}
\label{rk:kernel_IBP}
The output of~$A$ is defined up to linear combinations of exotic trees that vanish by integration by parts. The precise description of the kernel of the integration by parts relation~$\sim$ is an important open question for the high order integration for the invariant measure. The first occurrence of a non-trivial element in the kernel appears for order four. Here is one such element:
\begin{align*}
26\forest{b[1,1,b[b]]}
&-13\forest{b[1,1,2,2]}
-5\forest{b[b[b[1]],1}
-21\forest{b[b[b,1,1]]}
+5\forest{b[b[b[1],1]]}
-5\forest{b[b[1,1],b]}
+10\forest{b[b[b,b]]}\\&
+13\forest{b[b[1],1,2,2]}
-13\forest{b[b[1,2,2],1]}
-10\forest{b[b[b],b]}
-5\forest{b[b[1],1,b]}
+5\forest{b[b[b,1],1]}
+13\forest{b[1,1,b[2,2]]}
\sim 0
\end{align*}
\end{remark}

\subsection{The composition law in stochastic numerical analysis}
\label{section:composition_SDEs}

The use of the composition rule of exotic aromatic S\nobreakdash-series in stochastic numerical analysis first appears in~\cite{Laurent20eab,Laurent21ocf,Laurent21ata} (see also~\cite{Debrabant11cos}), without the Hopf algebra formalism.

\begin{proposition}[Composition of integrators]
Consider two independent integrators~$\Phi^1_h$ and~$\Phi^2_h$ with exotic S\nobreakdash-series~$S^h(a_1)$ and~$S^h(a_2)$, then the composition of~$\Phi^1_h$ and~$\Phi^2_h$ has the following S\nobreakdash-series
\[
\E[\phi((\Phi^2_{h}\circ \Phi^1_{h})(x))] = S^h(a_1\ast a_2)[\phi](x).
\]
\end{proposition}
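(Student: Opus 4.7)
The plan is to reduce the statement to a single application of the composition law of Theorem~\ref{theorem:exotic_composition_law} via the tower property of conditional expectation, and then to verify that the $h$-grading is preserved by the Butcher-Connes-Kreimer coproduct.

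First, I would use that $\Phi^1_h$ and $\Phi^2_h$ are independent to condition on the intermediate point. Setting $\psi_h(y) := \E[\phi(\Phi^2_h(y))]$, the independence assumption yields
\[
\E\bigl[\phi\bigl((\Phi^2_h \circ \Phi^1_h)(x)\bigr)\bigr]
= \E\bigl[\,\E[\phi(\Phi^2_h(y))]\big|_{y = \Phi^1_h(x)}\bigr]
= \E\bigl[\psi_h(\Phi^1_h(x))\bigr].
\]
Because $\Phi^2_h$ is an exotic aromatic S\nobreakdash-series method, $\psi_h = S^h(a_2)[\phi]$ as a formal series in $h$, and since this series is a linear combination of smooth elementary differentials acting on $\phi$, it is a valid test function to feed into $S^h(a_1)$. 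Applying the assumption that $\Phi^1_h$ is an S\nobreakdash-series method to $\psi_h$, I obtain
\[
\E[\psi_h(\Phi^1_h(x))] = S^h(a_1)[\psi_h](x) = S^h(a_1)\bigl[S^h(a_2)[\phi]\bigr](x).
\]

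Next, I would apply Theorem~\ref{theorem:exotic_composition_law} to the right-hand side. The theorem is stated for $S(a)$ with $h=1$, so I would absorb the stepsize into the coefficients by setting $a^h(\pi) := h^{|\pi|} a(\pi)$, so that $S^h(a) = S(a^h)$. Theorem~\ref{theorem:exotic_composition_law} then gives
\[
S(a_1^h)[S(a_2^h)[\phi]] = S(a_1^h \ast a_2^h)[\phi].
\]
To conclude, I need that $a_1^h \ast a_2^h = (a_1 \ast a_2)^h$. Because $\Delta_{BCK}$ of Definition~\ref{def:BCK} splits an exotic aromatic forest $\pi$ into a subforest $\pi_0$ and its complement $\pi \setminus \pi_0$ without changing the total count of black vertices, lianas, and stolons, the coproduct is graded with respect to the order $|\pi| = |V_\bullet| + |L| - |S|$ of Definition~\ref{def:EAF}, i.e., $|\pi_1| + |\pi_2| = |\pi|$ for every term $\pi_1 \otimes \pi_2$ appearing in $\Delta_{BCK}(\pi)$. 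Hence
\[
(a_1^h \ast a_2^h)(\pi) = \sum_{(\pi)} h^{|\pi_1|+|\pi_2|} a_1(\pi_1) a_2(\pi_2) = h^{|\pi|} (a_1 \ast a_2)(\pi),
\]
which is exactly $(a_1 \ast a_2)^h(\pi)$, and therefore $S(a_1^h \ast a_2^h) = S^h(a_1 \ast a_2)$.

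The main subtlety is not really a difficulty but a bookkeeping point: everything is done at the level of formal power series in $h$, so the independence argument collapses to the formal identity $\E[\phi \circ \Phi^2_h \circ \Phi^1_h] = \E[(\E[\phi \circ \Phi^2_h]) \circ \Phi^1_h]$, which is justified term-by-term in the Talay-Tubaro expansion. The other point to keep in mind is the gradedness of $\Delta_{BCK}$ for the order function of Definition~\ref{def:EAF} — this is where the stolons and lianas contribute, but since $\Delta_{BCK}$ keeps liana pairs and stolonic pairs together (as recalled in Definition~\ref{def:BCK}), the grading is preserved. Combining the two displays completes the proof.
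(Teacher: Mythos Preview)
Your argument is correct. The paper does not supply a proof of this proposition; it is stated as an immediate consequence of Theorem~\ref{theorem:exotic_composition_law} together with the definition of an exotic aromatic S\nobreakdash-series method, and your write-up makes exactly these two ingredients explicit, adding the bookkeeping check that $\Delta_{BCK}$ respects the order grading $|\pi|=|V_\bullet|+|L|-|S|$ so that the $h$-weighting factors correctly.
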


\begin{ex}
Consider the explicit and implicit Euler methods in~$\T^d$:
$$
\Phi^1_h(x)=x+hf(x)+\sqrt{h}\xi^1, \quad
\Phi^2_h(x)=x+hf(\Phi_2(x))+\sqrt{h}\xi^2,
$$
with independent random variables~$\xi_1$,~$\xi_2$.
Then, a calculation yields that the composed method~$\Phi$ has weak order 2 for solving equation~\eqref{equation:Langevin}:
\[
\E[\phi(\Phi_h(x))] = S^h(\mathbf{1}
+l+\frac{l^{\ast 2}}{2}+\dots)[\phi](x), \quad \Phi_h=\Phi^2_{h/2}\circ \Phi^1_{h/2}.
\]
The composed method~$\Phi$ coincides in law with the stochastic trapezoidal method
\[\Phi_h(x)=x+h\frac{f(x)+f(\Phi_h(x))}{2}+\sqrt{h}\xi.\]
\end{ex}

In the spirit of effective order for ODEs~\cite{Butcher69teo}, the postprocessing idea is used in~\cite{Vilmart15pif} for the high-order sampling of the invariant measure of~\eqref{equation:Langevin} in~$\T^d$ (or~$\R^d$). The approach with trees was then introduced in~\cite[Sec.\ts 5.3]{Laurent20eab}, where order conditions are presented for stochastic Runge-Kutta methods and postprocessors.
Consider an integrator~$\Phi$ for the invariant measure. After applying~$\Phi$ through the entire time interval, we apply a correction~$\overline{\Phi}$ at the very last step. If~$\overline{\Phi}$ is chosen carefully, the postprocessed integrator has a higher order than the original method~$\Phi$. This methodology yields a costless way to improve the order of a method.
\begin{proposition}[Postprocessed integrators]
Consider an integrator with S\nobreakdash-series~$S^h(a)$ of order~$p$ for the invariant measure of~\eqref{equation:Langevin}, and a correction with S\nobreakdash-series of the form
$$S^h(\overline{a})=\mathbf{1}+h\alpha_1 l +\dots + h^{p-1}\alpha_{p-1} l^{\ast p-1}+\dots,$$
for some constants~$\alpha_k\in\R$.
Then, the postprocessed method has order~$p+1$ for the invariant measure of~\eqref{equation:Langevin} in~$\T^d$ if the following condition is satisfied:
$$a+[l,\overline{a}]\sim \EF_{\geq p+1}^*, \quad [a,b]:=a\ast b-b\ast a.$$
\end{proposition}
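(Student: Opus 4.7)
The plan is to use the composition law of Theorem~\ref{theorem:exotic_composition_law} together with the IBP relation $\sim$ on $\EF^*$. By the composition law applied twice, the expectation of a test function after $N$ iterations of $\Phi_h$ followed by a single application of the postprocessor $\overline\Phi_h$ reads
\[
\E\bigl[\phi\bigl(\overline\Phi_h(\Phi_h^N(x_0))\bigr)\bigr] = S^h\bigl(a^{\ast N}\ast \overline a\bigr)[\phi](x_0) = S^h(a^{\ast N})\bigl[S^h(\overline a)[\phi]\bigr](x_0).
\]
I would first invoke the ergodicity assumption on $\Phi_h$: as $N\to\infty$ the outer operator $S^h(a^{\ast N})$ projects its argument onto $\mu^h$, so the postprocessed sample converges to $\int S^h(\overline a)[\phi]\,d\mu^h$. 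The problem thus reduces to showing
\[
\int S^h(\overline a)[\phi]\,d\mu^h - \int \phi\,d\mu_\infty = O(h^{p+1}).
\]

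Next I would expand this difference to order $h^{p+1}$ using the Talay-Tubaro expansion $d\mu^h=(1+h^p g_p+O(h^{p+1}))\,d\mu_\infty$ together with the structure of $\overline a$. The identity $\int L^k\phi\,d\mu_\infty=0$ for every $k\geq 1$, itself a direct consequence of the exact-flow invariance $\exp^\ast(l)\sim\delta_{\mathbf{1}}$, eliminates the polynomial-in-$l$ part of $\overline a$ from $\int S^h(\overline a)[\phi]\,d\mu_\infty$, leaving only its ``non-polynomial'' order-$p$ contribution $\beta_p$. The error then collapses to
\[
h^p\bigl(\textstyle\int \beta_p[\phi]\,d\mu_\infty+\int\phi\, g_p\,d\mu_\infty\bigr)+O(h^{p+1}).
\]
The central step is to identify this bracket, via IBP, as the pairing of $\phi$ against the order-$(p+1)$ component of $a+[l,\overline a]$: the $g_p$ term is the IBP counterpart of the order-$(p+1)$ part of $a-\delta_{\mathbf{1}}$ (IBP is exactly the formal dual of integration against $\mu_\infty$, so the Poisson-equation relation between $g_p$ and the leading forest error of $a$ is encoded by $\sim$), and the $\beta_p$ term corresponds to $[l,\overline a]\sim -\overline a\ast l$ via the key identity $l\ast b\sim 0$. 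The hypothesis $a+[l,\overline a]\sim \EF^*_{\geq p+1}$ then forces the bracket to vanish.

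The main obstacle is the asymmetry of $\sim$ with respect to $\ast$: left multiplication by $l$ annihilates modulo $\sim$ while right multiplication does not, and it is precisely this asymmetry that forces the commutator $[l,\overline a]$, rather than a one-sided product, to appear in the condition. One must also carefully check that every sub-leading contribution from the interaction of $\overline a$, $l$ and $a-\delta_{\mathbf{1}}$ lies in $\EF^*_{\geq p+1}$ and thus falls into the $O(h^{p+1})$ remainder; the assumption that $\overline a$ is polynomial in $l$ up to order $p-1$, combined with $[l,l^{\ast k}]=0$, is exactly what guarantees this by ensuring that $[l,\overline a]$ itself has order at least $p$.
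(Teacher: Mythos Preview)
The paper states this proposition without proof (it cites prior work of Vilmart and of Laurent--Vilmart), so there is no argument in the paper to compare against; I evaluate your outline on its own merits.

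Your reduction is correct: ergodicity and the vanishing of $\int\LL^k\phi\,d\mu_\infty$ bring the error down to
\[
h^p\Bigl(\int\beta_p[\phi]\,d\mu_\infty+\int\phi\,g_p\,d\mu_\infty\Bigr)+O(h^{p+1}).
\]
The difficulty, and the place where your write-up is too loose, is the ``central step'' converting this $h^p$ analytic quantity into an order-$(p{+}1)$ forest condition on $a+[l,\overline a]$. You invoke the Poisson equation only for the relation between $g_p$ and $a_{p+1}$, and then separately assert that $\beta_p$ ``corresponds to'' $[l,\overline a]\sim-\overline a\ast l$; but $\beta_p$ lives at order $p$ while $[l,\overline a]$ lives at order $p{+}1$, and some mechanism must produce the extra factor of $l$. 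That mechanism is a single Poisson substitution on the \emph{test function}: write $\phi=\LL\psi+\text{const}$. Then $\int\beta_p[\phi]\,d\mu_\infty=\int S(\beta_p\ast l)[\psi]\,d\mu_\infty$, while the $h^{p+1}$ coefficient of the invariance identity $\int S^h(a)[\psi]\,d\mu^h=\int\psi\,d\mu^h$ yields $\int\phi\,g_p\,d\mu_\infty=\int\LL\psi\,g_p\,d\mu_\infty=-\int S(a_{p+1})[\psi]\,d\mu_\infty$. Only now do the two pieces combine to $\beta_p\ast l-a_{p+1}\sim 0$, i.e.\ $(a+[l,\overline a])_{p+1}\sim 0$ after using $l\ast\beta_p\sim 0$. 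Without this substitution your argument does not actually close; the sentence ``IBP is the formal dual of integration against $\mu_\infty$'' is true but does not by itself generate the missing factor of $l$.

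A cleaner, fully algebraic alternative avoids the expansion of $\mu^h$ altogether: the postprocessed long-time law is the invariant measure of the \emph{conjugate} integrator with S-series $\overline a^{\,-1}\ast a\ast\overline a$, so it suffices that this conjugate have order $p{+}1$ for the invariant measure. Expanding $\overline a^{\,-1}\ast(a-\delta_{\mathbf 1})\ast\overline a$ at order $p{+}1$ and using (i) that $\sim$ is a right congruence for $\ast$ (so $a_j\sim 0$ for $j\le p$ kills $a_j\ast\overline a_m$), (ii) $l^{\ast k}\ast c\sim 0$ for $k\ge 1$ (so every term with a left factor from the polynomial part of $\overline a^{\,-1}$ dies), and (iii) that $\overline a,\overline a^{\,-1}$ are polynomial in $l$ below order $p$, every contribution drops except $a_{p+1}$ and $(\overline a^{\,-1})_p\ast l\sim-\beta_p\ast l$, giving the same condition without any appeal to $g_p$ or the analytic Poisson equation. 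Your closing remarks about the asymmetry of $\sim$ and the role of $[l,l^{\ast k}]=0$ are exactly the right intuition for why this works.
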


\begin{ex}
The following scheme, first introduced in~\cite{Leimkuhler13rco}, has weak order one and order two for the invariant measure,
\[
\Phi_h(x)=x+hf(x+\frac{\sqrt{h}}{2}\xi)+\sqrt{h}\xi, \quad
\overline{\Phi}_h(x)=x+\frac{\sqrt{h}}{2}\xi.
\]
Further examples can be found in~\cite{Vilmart15pif,Abdulle17oes,Laurent20eab}.
\end{ex}

\subsection{The substitution law in stochastic numerical analysis}
\label{section:solving_SDEs}

The main applications of the substitution law of standard B\nobreakdash-series are backward error analysis and modified equations for ODEs. Adapting backward error analysis and modified equations in the stochastic context is challenging~\cite{Shardlow06mef} and it is an active field of research~\cite{Zygalakis11ote, Abdulle12hwo, Debussche12wbe, Abdulle14hon, Kopec15wbea, Kopec15wbeb, Li19sme, DiGiovacchino23bea, Laurent23tue}. It was in particular proven in~\cite{Debussche12wbe,Abdulle14hon} that backward error analysis rewrites nicely in the context of the invariant measure. We use the substitution law and the integration by parts operation to show that the modified vector field writes as an exotic S\nobreakdash-series, for which we provide an explicit expression.
For simplicity, we work in~$\T^d$ and discuss the manifold case at the end of the subsection.

The goal of backward error analysis is to find a modified vector field written formally as an exotic aromatic B\nobreakdash-series,
$$h\widetilde{f}=B^h(b)=hf+h^2f_1+h^3f_2+\dots, \quad b\colon \ET\rightarrow\R, \quad b(\bullet)=1,$$
for some vector fields~$f_1$,~$f_2$, \dots that typically write as polynomials in the coordinates of~$f$ and its partial derivatives, such that the invariant measure of the ergodic integrator~\eqref{equation:integrator} with S\nobreakdash-series~$S(a)$ coincides with the invariant measure of the modified dynamics:
\begin{equation}
\label{equation:modified_Langevin}
d\widetilde{X}(t)=\Pi_\MM(\widetilde{X}(t)) \widetilde{f}(\widetilde{X}(t))dt +\Pi_\MM(\widetilde{X}(t)) \circ dW(t),\quad \widetilde{X}(0)=\widetilde{X}_0\in \MM.
\end{equation}
The coefficient map~$b$ is the solution to the substitution
$
b_c \star e\sim a
$
where~$\star$ is the substitution law (see Theorem~\ref{theorem:exotic_substitution_law}).

It is known~\cite{Debussche12wbe,Abdulle14hon} that there exists a modified vector field for large classes of methods, such as stochastic Runge-Kutta methods.
The calculations are tedious and were rewritten with exotic series in~\cite{Laurent20eab, Bronasco22ebs}.
There is, however, no proof that the calculations can be carried out up to any order in these works as there is no reason in general why the modified vector field could be written as an exotic B\nobreakdash-series.
A geometric justification of the importance of writing the modified vector field as an exotic aromatic B\nobreakdash-series is given in~\cite{Laurent23tue}: it enforces that~$\widetilde{f}$ is invariant with respect to orthogonal changes of coordinates, which is a natural property in the stochastic context.
We provide here a simple and natural algebraic criterion, satisfied by large classes of methods, for the description of integrators that have a modified vector field in the form of an exotic B\nobreakdash-series. In addition, we give the first explicit expression of the modified vector field~$\widetilde{f}$ relying on the map~$A$ defined in Proposition~\ref{proposition:prop_character_IBP}. This shows in particular that exotic series are a powerful tool for the stochastic backward error analysis.

\begin{theorem}[Backward error analysis]
\label{theorem:BEA_Rd}
Consider a consistent method with the exotic S\nobreakdash-series~$S^h(a)$ for solving equation~\eqref{equation:Langevin} with~$f=-\nabla V$. Assume that~$a$ is a character of~$(\EF,\cdot)$. Then, there exists a modified vector field~$h\widetilde{f}=B^h(b)$ that can be written as an exotic B\nobreakdash-series with a coefficient map~$b\colon \ET\rightarrow\R$ satisfying~$
b_c\star e\sim a
$, and given by
    \[ b = \delta_\bullet + A \Big(\sum_{k=0}^{\infty} (-1)^k A_{\tilde\star e}^k (a - e) \Big)\restrict{\ET}, \]
    where~$A_{\tilde\star e} : \EF^* \to \EF^*$ satisfies~$A_{\tilde\star e}(x) = A(x) \tilde\star e$ and
\[ a \tilde\star e = (a \otimes e) \circ \tilde{\Delta}_{CEM}, \quad \text{and} \quad \tilde{\Delta}_{CEM} (\pi) = \Delta_{CEM} (\pi) - \bullet \otimes \pi - \pi \otimes \bullet, \]
for~$a \in \EF^*$,~$\pi \in EF$ such that~$|\pi| > 1$.
\end{theorem}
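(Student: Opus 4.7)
The plan is to recast $b_c \star e \sim a$ as a linear fixed-point equation on a completion of~$\EF^*$ modulo the integration by parts relation~$\sim$, and invert it by a formal Neumann series. I would first decompose the CEM coproduct: for~$\pi \in EF$ with~$|\pi| > 1$, the definition of~$\tilde\Delta_{CEM}$ gives the splitting
\[\Delta_{CEM}(\pi) = \bullet \otimes \pi + \tilde\Delta_{CEM}(\pi) + \pi \otimes \bullet.\]
Pairing with~$b_c \otimes e$, the first extreme contributes~$e(\pi)$ (since~$b(\bullet)=1$ makes~$b_c$ equal to~$1$ on singleton-$\bullet$ clumpings), the last contributes~$b_c(\pi)$ (since~$e(\bullet)=1$), and the middle is~$(b_c \tilde\star e)(\pi)$. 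On a tree~$\pi \in ET$ one has~$b_c(\pi) = b(\pi)$, so setting the left-hand side~$\sim a(\pi)$ gives
\[b(\pi) \sim (a - e - b_c \tilde\star e)(\pi).\]

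Next, I would apply the map~$A$ of Proposition~\ref{proposition:prop_character_IBP} (which satisfies~$A|_{\ET^*} = \id$) to lift this to an equation on~$\EF^*$ modulo~$\sim$, namely $b \sim A(a - e - b_c \tilde\star e)\restrict{\ET}$. The crucial linearization observation is that at each fixed order in the grading, the nonlinear contributions of the character extension~$b_c$ to~$b_c \tilde\star e$ involve only values of~$b$ on trees of strictly smaller order, since clumpings with multiple non-trivial clumps consume multiple distinct black vertices. Thus the iteration is linear order by order, and we may replace~$b_c \tilde\star e$ by the linear operator~$T := A_{\tilde\star e}$ on~$\EF^*$ acting on an intermediate quantity~$x$ satisfying $(I + T)(x) = a - e$ together with $b - \delta_\bullet = A(x)\restrict{\ET}$.

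Because~$\tilde\Delta_{CEM}$ excludes the order-preserving extreme~$\pi \otimes \bullet$, the operator~$T$ strictly raises the order filtration on~$\EF^*$, so the formal Neumann series
\[(I + T)^{-1} = \sum_{k=0}^\infty (-1)^k T^k\]
converges in the completion. Applying~$A$ once more and restricting to~$\ET$ recovers the claimed formula
\[b = \delta_\bullet + A\Bigl(\sum_{k=0}^\infty (-1)^k A_{\tilde\star e}^k(a-e)\Bigr)\restrict{\ET}.\]
Verification that this~$b$ satisfies~$b_c \star e \sim a$ then proceeds by direct substitution, using the Neumann identity~$(I+T)(I+T)^{-1} = I$ together with the decomposition of the first step.

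The main obstacle is the linearization argument. Carefully handling the multiplicative character extension~$b_c$ (which is nonlinear in~$b$) and showing that its higher-order-in-$b$ contributions to~$b_c \tilde\star e$ are consistently absorbed into the lower-order parts of the iteration requires an inductive argument on the order filtration. One must also track the ambiguity in~$A$ modulo the IBP kernel (Remark~\ref{rk:kernel_IBP}) to ensure the iterative solution is a well-defined element of~$\ET^*$.
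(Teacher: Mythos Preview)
Your approach is essentially the paper's: the paper runs the Picard iteration $b_0=\delta_\bullet$, $b_n=b_{n-1}+A(a-b_{n-1,c}\star e)\restrict{\ET}$, uses the same three-term splitting of $\Delta_{CEM}$ to obtain $b_n(\tau)=A(a-e-b_{n-1,c}\tilde\star e)(\tau)$, unfolds this to the partial Neumann sum $b_n=\delta_\bullet+A\bigl(\sum_{k=0}^{n-1}(-1)^k A_{\tilde\star e}^k(a-e)\bigr)\restrict{\ET}$, and concludes by stationarity for $n\geq|\tau|_e$ via $\tilde\Delta_{CEM}^n(\tau)=0$. The linearization you correctly flag as the main obstacle is not treated by a separate lemma in the paper either; it is absorbed into the explicit order-by-order unfolding of the iteration.
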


\begin{proof}
After initializing~$b_0=\delta_{\bullet}$, we construct recursively the coefficient map sequence~$(b_n)$ by
\[b_n = b_{n-1} + A(a - b_{n-1,c} \star e)\restrict{\ET}
, \quad b_{n-1,c} \star e = (b_{n-1,c} \otimes e) \circ \Delta_{CEM}, \]
with~$b_{n-1,c}:=\exp^\odot(b_{n-1})$ and the coproduct~$\Delta_{CEM} : \EF \to S(\ET) \otimes \EF$.
Assume~$b_{n-1}(\bullet)=1$, since~$A^* (\bullet) = \bullet - \forest{1,1}$ and the method associated to~$a$ is consistent, we find 
\[ b_n (\bullet) = b_{n-1}(\bullet) + (a-b_{n-1,c} \star e) (\bullet - \forest{1,1}) = 1 + a(\bullet) - a(\forest{1,1})-e(\bullet) + e(\forest{1,1}) = 1, \]
thus we obtain that~$b_n (\bullet) = 1$ for any~$n$ by induction.
For all~$\tau\in \ET$ such that~$|\tau| > 1$, using the reduced coproduct yields
\begin{align*}
    b_n(\tau)&= b_{n-1}(\tau) +A(a)(\tau) - (b_{n-1}\otimes e)(A^*(\tau)\otimes \bullet) \\
    &- (b_{n-1}\otimes e)(\bullet \otimes A^*(\tau)) - A(b_{n-1,c} \tilde\star e)(\tau) \\
    &=b_{n-1}(\tau) - A(b_{n-1})(\tau) + A(a-e - b_{n-1,c} \tilde\star e)(\tau) \\
    &= A(a-e + b_{n-1,c} \tilde\star e)(\tau),
\end{align*}
where we recall that the only exotic tree~$\tau$ for which~$|\tau| \leq 1$ is~$\tau = \bullet$ and we used~$A(b_{n-1}) = b_{n-1}$.
%
%
The first values are
\begin{align*}
    b_1 &= \delta_\bullet + A(a - e)\restrict{\ET}, \\
	b_2 &= \delta_\bullet + A(a - e)\restrict{\ET} - A(A(a - e) \tilde\star e)\restrict{\ET}, \\
	&\vdots \\
	b_n &= \delta_\bullet + A\big(\sum_{k=0}^{n-1} (-1)^k A_{\tilde\star e}^k (a - e) \big)\restrict{\ET}, \quad \text{where } A_{\tilde\star e} (x) = A(x) \tilde\star e.
\end{align*}
Since~$|A^*(\tau)|_e \leq |\tau|_e$ where~$|\tau|_e$ is the number of edges of~$\tau$, and~$\tilde{\Delta}_{CEM}^n (\tau) = 0$ if~$n \geq |\tau|_e$, we have for all~$n \geq |\tau|_e$,
\[ b_n(\tau) =  \delta_\bullet + A \big(\sum_{k=0}^{|\tau|_e - 1} (-1)^k A_{\tilde\star e}^k (a - e) \big)(\tau), \]
so that the sequence~$(b_n)$ converges to the desired coefficient map~$b$ by stationarity.
\end{proof}

\begin{ex}
The first terms of the modified vector field~$h\widetilde{f}=B^h(b)$ given by Theorem~\ref{theorem:BEA_Rd} for the Euler-Maruyama method~\eqref{equation:EM} are
\[
B^h(b)=h\forest{b}
+\frac{h^2}{2}\forest{b[b]}+\frac{h^2}{4}\forest{b[1,1]}
-\frac{h^2}{2}\forest{b[b[b]]}+\frac{h^2}{12}\forest{b[b,b]}-\frac{h^2}{4}\forest{b[b[1,1]]}-\frac{h^2}{12}\forest{b[b[1],1]}+\frac{h^2}{12}\forest{b[b,1,1]}+\frac{h^2}{12}\forest{b[1,1,2,2]}+\dots
\]
Note that removing the stochastic terms (that are, the trees with lianas) does not yield the modified vector field for the Euler method with the standard deterministic backward error analysis (see~\cite[Chap.\ts IX]{Hairer06gni}).
Indeed, high order for the invariant measure does not imply high order in the weak or strong sense \cite{Abdulle14hon}, hence the modified equations and order conditions in the deterministic sense or weak sense are not the same as for the invariant measure sampling, as highlighted in the works~\cite{Abdulle14hon, Abdulle15lta, Laurent20eab, Laurent21ocf, SanzSerna14mcm}.
\end{ex}

\begin{remark}
\label{remark:EA_bicomplex}
Similarly to the deterministic context~\cite[Chap.\ts IX]{Hairer06gni}, the properties of the scheme are observed directly on its associated modified vector field. In particular, if~$h\widetilde{f}=B^h(b)$ with a coefficient map~$b$ satisfying~$b-\delta_{\bullet}\sim \ET_{>p}^*$, then the integrator has at least order~$p$ for the invariant measure. This gives an equivalent definition of the order for the invariant measure based on the modified vector field, as observed in~\cite{SanzSerna14mcm,Abdulle15lta}.
In particular, a method is exact for the invariant measure if its modified vector field satisfies
\begin{equation}
\label{equation:stochastic_VP}
\Div(\widetilde{f}-f)+\langle f,\widetilde{f}-f\rangle=0.
\end{equation}
Equation~\eqref{equation:stochastic_VP} is analogous to the one satisfied by the modified vector field of volume-preserving aromatic B\nobreakdash-series methods for ODEs, whose understanding is already an important unsolved problem of deterministic geometric numerical integration (see, for instance, the works~\cite{Chartier07pfi,Iserles07bsm,MuntheKaas16abs,Bogfjellmo19aso, Bogfjellmo22uat, Laurent23tab}).
\end{remark}

\begin{remark}
Following~\cite{Debussche12wbe}, the exotic B\nobreakdash-series~$h\widetilde{f}=B^h(b)$ translates into a formal expansion of the invariant measure~$d\mu^h=\rho^h d\sigma_\MM$ of the integrator by solving:
$$\rho^h=\rho_\infty+h\rho_1+h^2\rho_2+\dots,\quad
\widetilde{\LL}^* \rho^h=0, \quad \widetilde{\LL}=\LL+(\widetilde{f}-f)\nabla.$$
For the first few terms, we find (see~\cite{Debussche12wbe, Abdulle14hon} for the detailed expansion),
\[
\LL^*\rho_1=-\Div(f_1\rho_\infty),
\quad
\LL^*\rho_2=-\Div(f_1\rho_1)-\Div(f_2\rho_\infty).
\]
\end{remark}

Consider now a consistent exotic aromatic B\nobreakdash-series method with S\nobreakdash-series~$S(a)$.
Similar to backward error analysis, we are interested in finding a modified vector field~$h\widetilde{f}=B^h(b)$ with~$b\in \ET^*$ and~$b(\bullet)=1$ such that
$
b_c\star a\sim \delta_{\mathbf{1}},
$
that is, the integrator applied to the modified equation~\eqref{equation:modified_Langevin} is exact.
This technique allows in particular to increase the order of a numerical method when the partial derivatives of~$f$ are not costly to evaluate (see, for instance, in the deterministic setting~\cite{Calvo94mef, Hairer06gni, Chartier07nib}).
A general expansion of the modified vector field is presented in the~$\T^d$ case in~\cite{Abdulle14hon,Laurent20eab}, but it is not an exotic B\nobreakdash-series in general and it is not unique.
Following~\cite{Bronasco22ebs}, we propose a simple criterion to obtain the existence of a modified vector field in the form of an exotic B\nobreakdash-series in the context of~$\T^d$, for which we also provide an explicit expression.
\begin{theorem}[Modified equations]
\label{theorem:modified_equation_Rd}
Consider a consistent method with the exotic S\nobreakdash-series $S^h(a)$ for solving equation~\eqref{equation:Langevin} with~$f=-\nabla V$. Assume that~$a$ is a character of~$(\EF,\cdot)$. Then, there exists a modified vector field~$h\widetilde{f}=B^h(b)$ that can be written as an exotic B\nobreakdash-series with the coefficient map~$b\colon \ET\rightarrow\R$ satisfying~$b(\bullet)=1$,
$
b_c\star a\sim \delta_{\mathbf{1}}
$, and given by
\[ b = \delta_\bullet - A \big(\sum_{k=0}^\infty (-1)^k A_{\tilde\star a}^k (a)\big). \]
\end{theorem}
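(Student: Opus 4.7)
The approach mirrors that of Theorem~\ref{theorem:BEA_Rd}, with the roles of~$e$ and~$a$ interchanged: the target is now~$b_c\star a \sim \delta_{\one}$ instead of~$b_c\star e \sim a$. I would construct~$b$ as the pointwise (on trees) limit of the fixed-point iteration
\[ b_0 = \delta_\bullet, \qquad b_n = b_{n-1} - A\bigl(b_{n-1,c} \star a\bigr)\restrict{\ET}, \]
observing that the~$\delta_\one$ on the right-hand side of the error equation may be dropped since~$A(\delta_\one)\restrict{\ET} = 0$ (as~$A^\ast(\tau)$ is a combination of non-empty forests for every~$\tau\in ET$) and since~$\delta_{\bullet,c}\star a = a$, so that~$b_1 = \delta_\bullet - A(a)\restrict{\ET}$ matches the leading term of the claimed formula.

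First I would check, by induction, that~$b_n(\bullet) = 1$ for all~$n$. Starting from~$b_0(\bullet) = 1$, the identity~$A^\ast(\bullet) = \bullet - \forest{1,1}$ together with the consistency of the method (i.e.,~$a(\bullet)=1=a(\forest{1,1})$, which is forced by matching the first terms of the exact flow~$e$ in~\eqref{equation:exact_flow_S_Series}) makes the correction vanish at~$\bullet$, exactly as in the proof of Theorem~\ref{theorem:BEA_Rd}.

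For~$\tau\in ET$ with~$|\tau|>1$, I would split~$\Delta_{CEM}(\tau)$ into the two trivial contributions (which yield~$b_{n-1}(\tau)+a(\tau)$ after pairing with~$b_{n-1,c}\otimes a$) and the reduced part~$\tilde\Delta_{CEM}(\tau)$, to obtain
\[ A\bigl(b_{n-1,c}\star a\bigr)(\tau) = A(a)(\tau) + b_{n-1}(\tau) + A\bigl(b_{n-1,c}\,\tilde\star\, a\bigr)(\tau), \]
with~$A|_{\ET^\ast}=\id$ absorbing~$b_{n-1}$. Substituting into the recursion gives
\[ b_n(\tau) = -A(a)(\tau) - A\bigl(b_{n-1,c}\,\tilde\star\, a\bigr)(\tau), \]
and telescoping yields the closed form
\[ b_n = \delta_\bullet - A\Bigl(\sum_{k=0}^{n-1}(-1)^k\, A_{\tilde\star a}^k(a)\Bigr)\restrict{\ET}. \]
Since~$\tilde\Delta_{CEM}^k(\tau)$ vanishes whenever~$k$ exceeds the number of edges of~$\tau$, the sequence~$b_n(\tau)$ is eventually constant for each fixed~$\tau$; passing to the limit yields~$b$ with the desired explicit expression, and the relation~$b_c\star a\sim \delta_\one$ then follows from the fixed-point structure of the recursion.

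The main obstacle I anticipate is bookkeeping the character extension~$b_{n-1,c}$ inside~$b_{n-1,c}\,\tilde\star\, a$: the left leg of~$\tilde\Delta_{CEM}(\tau)$ can carry a clumped product of several strictly smaller exotic aromatic trees, so~$b_{n-1,c}$ contributes a product of previously computed coefficients rather than a single~$b_{n-1}$-value. Justifying that the single-tree iteration~$A_{\tilde\star a}$ nevertheless captures these products correctly — and that the integration-by-parts equivalence~$\sim$ interacts cleanly with the character structure throughout — rests on the Hopf-algebroid compatibility of~$\star$ with the concatenation product on~$\CEFone$ established in Section~\ref{sec:algebraic_structure}, together with the idempotence~$A^2=A$.
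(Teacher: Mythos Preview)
Your proposal is correct and follows essentially the same route as the paper: the same fixed-point iteration $b_0=\delta_\bullet$, $b_n=b_{n-1}-A(b_{n-1,c}\star a)\restrict{\ET}$, the same induction for $b_n(\bullet)=1$, the same splitting into trivial and reduced parts of $\Delta_{CEM}$ leading to $b_n=-A(a+b_{n-1,c}\,\tilde\star\,a)$, and the same stationarity argument via $\tilde\Delta_{CEM}^k(\tau)=0$ for $k\geq|\tau|_e$.  The paper's proof is in fact terser than yours on the very point you flag---it simply asserts the telescoped closed form without spelling out how $b_{n-1,c}\,\tilde\star\,a$ collapses to the single-tree iterate $A_{\tilde\star a}$---so your caution there is well placed but not a gap relative to the reference argument.
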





To prove Theorem~\ref{theorem:modified_equation_Rd}, we use a similar reasoning as in Theorem~\ref{theorem:BEA_Rd}.
\begin{proof}
We introduce the sequence (in the spirit of the works~\cite{Abdulle14hon,Laurent20eab})
    \[ b_n = b_{n-1} - A(b_{n-1,c} \star a), \quad b_0=\delta_{\bullet}. \]
    Similarly to the proof of Theorem~\ref{theorem:BEA_Rd}, we show by induction that~$b_n (\bullet) = 1$ and we obtain the identity $b_n = -A(a + b_{n-1,c} \tilde\star a)$ using the reduced coproduct.
The sequence~$(b_n)$ takes the following first values
    \begin{align*}
        b_1 &= \delta_\bullet - A(a), \\
        b_2 &= \delta_\bullet - A(a) + A(A(a) \tilde\star a), \\
        &\vdots \\
        b_n &= \delta_\bullet - A\big(\sum_{k=0}^{n-1} (-1)^k A_{\tilde\star a}^k (a)\big), \quad \text{where } A_{\tilde\star a} (x) = A(x) \tilde\star a,
    \end{align*}
and~$(b_n)$ converges to~$b=\delta_\bullet - A\big(\sum_{k=0}^\infty (-1)^k A_{\tilde\star a}^k (a)\big)$ by stationarity.
\end{proof}

Any stochastic Runge-Kutta method~\eqref{equation:SRK} has a coefficient map that is a character, so that Theorem~\ref{theorem:modified_equation_Rd} applies and there exists a modified vector field that can be written as an exotic B\nobreakdash-series.
We refer to~\cite[Sec.\ts 5.1]{Laurent20eab} for examples in~$\T^d$.
In the manifold case, the most popular integrators for solving constrained SDEs in the weak sense or for sampling the invariant measure are projection methods (see, for instance, the textbook~\cite[Chap.\ts 3]{Lelievre10fec}).
In~\cite{Laurent21ocf}, a new class of Runge-Kutta projection methods was introduced for the high-order approximation of constrained overdamped Langevin processes in codimension one.
Theorems~\ref{theorem:BEA_Rd} and~\ref{theorem:modified_equation_Rd} do not extend straightforwardly to the manifold case as there does not exist an equivalent of Proposition~\ref{proposition:prop_character_IBP} with general exotic aromatic S\nobreakdash-series. However, one can adapt the algorithm described in the proof of Theorem~\ref{theorem:modified_equation_Rd}, taking over the integration by parts relation~$\sim$ described in~\cite{Laurent21ocf}. Define the projection operator~$\Pi\colon \EAT\rightarrow \EAT$ by~$\Pi \tau=\tau -\tau\ins_{\bullet}(\forest{w=b,w})$, with~$\ins_{\bullet}$ the decorated insertion product (see Section~\ref{sec:substitution_law}), and its dual~$\Pi^*$. Define~$b_0=\delta_{\bullet}$.
For the induction, at the step~$n$, if~$A(b_{n-1,c} \star a)$ is well-defined and if~$\Pi^* A(b_{n-1,c} \star a)=A(b_{n-1,c} \star a)$, compute \[ b_n = b_{n-1} - A(b_{n-1,c} \star a).\]
Then, applying the integrator to the equation~\eqref{equation:modified_Langevin} with the modified vector field~$B_{h,f}(b_n)$ yields a method of order~$n+1$ for the invariant measure of the original problem~\eqref{equation:Langevin}.

\begin{ex}
For the sake of simplicity, we consider the unit sphere~$\MM=\{x\in\R^d, \abs{x}^2=1\}$, with~$n(x)=x$.
The constrained Euler scheme with implicit projection direction:
\[
X_{n+1}=X_n+hf(X_{n+1})+\sqrt{h}\xi_n+\lambda_n X_{n+1},\quad \abs{X_{n+1}}=1.
\]
has order 2 for sampling the invariant measure of~\eqref{equation:Langevin} when applied to the modified equation~\eqref{equation:modified_Langevin} with the modified vector field
\begin{align*}
\widetilde{f}&=f+h\bigg[
-\frac{1}{2}f'f
-\frac{1}{4}\Delta f
+\frac{3}{4}f
-\frac{1}{4}\Div(n)f
-\frac{1}{2} \langle n,f\rangle f
-\frac{1}{4} f'n\\&
-\frac{1}{4}\Div(n)f'n
-\frac{1}{2} \langle n,f\rangle f'n
-\frac{1}{4} f''(n,n)
+\frac{1}{2} \langle n,f\rangle \langle n,f'n\rangle n\\&
+\frac{1}{4} \langle n,f''(n,n)\rangle n
-\frac{1}{4} \Div(f)'n n
-\frac{1}{2} \langle n,f'f\rangle n
+\frac{1}{4} \langle n,f'n\rangle n\\&
+\frac{1}{4}\Div(n)(n,f'n\rangle n
+\frac{1}{2}\langle n,f\rangle^2 n
-\frac{3}{4}\langle n,f\rangle n
+\frac{1}{4}\Div(n)\langle n,f\rangle n
\bigg].
\end{align*}
%
\end{ex}

\section{Algebraic structure of decorated and exotic aromatic forests}
\label{sec:algebraic_structure}

This section is devoted to a detailed study of the algebraic structures of decorated aromatic and exotic aromatic forests, which are used to describe the corresponding substitution laws presented in Theorems~\ref{theorem:exotic_substitution_law} and~\ref{thm:substitution_law}. Section~\ref{sec:D-algebra} defines a free D-algebra~\cite{Munthe-Kaas-Wright-Dalgebra} over decorated aromatic forests and describes the corresponding Grossman-Larson Hopf algebroid and pre-Hopf algebroid, generalizing the Grossman-Larson Hopf algebra and pre-Hopf algebra~\cite{post-Hopf-algebra} structures. The Grossman-Larson Hopf algebroid is closely related to the composition of differential operators and, consequently, to the composition law of S\nobreakdash-series.

Section~\ref{sec:decorated_clumped_forests} focuses on the algebraic structure of decorated clumped forests as well as the relation between clumped and aromatic forests. The introduction of decorated clumped forests is necessary due to the fact that the substitution law is described using a homomorphism with respect to a product which is not~$\AA_D$-bilinear, and therefore, we need to attach aromas to rooted components. Section~\ref{sec:substitution_law} introduces the substitution law for decorated aromatic forests, while Section~\ref{sec:exotic_algebraic_structure} details the algebraic results presented in this section in the context of exotic aromatic forests.

\subsection{D-algebra of decorated aromatic forests}
\label{sec:D-algebra}

In this section, we introduce algebraic structures over decorated aromatic forests that are relevant in our description of the substitution law. 
Let~$\graft \,: \AT_D \otimes \AT_D \to \AT_D$ denote the grafting product over decorated aromatic trees. The grafting product~$\tau \graft \gamma$ is the sum over all ways to attach the root of~$\tau$ to a vertex of~$\gamma$, for example,
\[ \forest{(b),b[b]} \graft \forest{(b,b),b} = 2 \, \forest{(b),(b,b[b[b]]),b} + \forest{(b),(b,b),b[b[b]]}. \]
Divergence of an aromatic tree is defined to be a map~$d : \AT_D \to \AA_D$ such that~$d(\tau)$ is a sum over all ways to attach the root of~$\tau$ to one of its vertices, for example,
\[ d(\forest{(b),b[b,b]}) = \forest{(b[b[b,b]])} + \forest{(b),(b[b,b])} + 2 \forest{(b),(b,b[b])}. \]
Let~$El_{\AA_D} (\AT_D)$ be the algebra of endomorphisms~$\blank \graft \tau : \AT_D \to \AT_D$. The pair~$(\AT_D, \AA_D)$ is the tracial pre-Lie-Rinehart algebra generated by the set~$D$, that is, it satisfies the following properties:
\begin{enumerate}
    \item~$\AA_D$ is a unital commutative algebra with concatenation product~$\cdot : \AA_D \otimes \AA_D \to \AA_D$,
    \item~$\AT_D$ is an~$\AA_D$-module with a pre-Lie product~$\graft \,: \AT_D \otimes \AT_D \to \AT_D$, that is,
    \[ \tau \graft (\gamma \graft \nu) - (\tau \graft \gamma) \graft \nu = \gamma \graft (\tau \graft \nu) - (\gamma \graft \tau) \graft \nu, \]
    for~$\tau, \gamma \in \AT_D, \nu \in \AT_D \sqcup \AA_D$,
\item for any~$\tau \in \AT_D$, the map~$\tau \graft \blank : \AA_D \to \AA_D$ is a derivation and the Leibniz rule holds,
    \[ \tau \graft (\omega \cdot \nu) = (\tau \graft \omega) \cdot \nu + \omega \cdot (\tau \graft \nu), \]
    for~$\omega \in \AA_D$ and~$\nu \in \AA_D \sqcup \AT_D$.
    \item there exists a map~$t : El_{\AA_D}(\AT_D) \to \AA_D$ called a trace that satisfies~$t(\tau \graft \tilde\gamma) = \tau \graft t(\tilde\gamma)$ and~$t(\tilde\tau \circ \tilde\gamma) = t(\tilde\gamma \circ \tilde\tau)$ with~$\tau \in \AT_D, \tilde\tau, \tilde\gamma \in El_{\AA_D} (\AT_D)$ and~$\circ$ the composition of endomorphisms. The divergence is then defined as~$d(\tau) := t(\blank \graft \tau)$.
\end{enumerate} 
More details can be found in~\cite{Floystad20tup} where it is proven that~$(\AT_D, \AA_D)$ is a free tracial pre-Lie-Rinehart algebra.
We extend the structure of the tracial pre-Lie-Rinehart algebra by considering a symmetric~$\AA_D$-bilinear form~$\langle \blank, \blank \rangle : \AT_D \otimes \AT_D \to \AA_D$ with the Leibniz rule:
\[ \tau \graft \langle \gamma, \nu \rangle = \langle \tau \graft \gamma, \nu \rangle + \langle \gamma, \tau \graft \nu \rangle, \quad \text{for } \tau, \gamma, \nu \in \AT_D. \]
The aroma~$\langle \gamma, \nu \rangle \in \AA_D$ is called a stolon and is denoted by a horizontal double edge that connects the roots of the corresponding trees, for example,~$\langle \forest{(b),b}, \forest{b[b]} \rangle = \forest{(b),b=b[b]}$.
\begin{proposition}
\label{proposition:freeness}
  The tracial stolonic pre-Lie-Rinehart algebra~$(\AT_D, \AA_D)$ is free.
\end{proposition}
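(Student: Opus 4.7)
The plan is to verify the universal property that characterises freeness: for any tracial stolonic pre-Lie-Rinehart algebra $(\mathfrak{g}, R, \graft^R, t^R, \langle\blank,\blank\rangle_R)$ over the ground field, and any set map $\varphi_0 \colon D \to \mathfrak{g}$, there exists a unique morphism $\Phi \colon (\AT_D, \AA_D) \to (\mathfrak{g}, R)$ of tracial stolonic pre-Lie-Rinehart algebras extending $\varphi_0$ (identifying each single vertex $\bullet_d$ with $d \in D$).

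The first step is to invoke the Floystad-Manchon freeness result~\cite{Floystad20tup}: the tracial pre-Lie-Rinehart algebra $(\AT_D, \AA_D^{\mathrm{cyc}})$, where $\AA_D^{\mathrm{cyc}} \subset \AA_D$ is the subalgebra of aromas containing only cyclic components, is freely generated by $D$. This extends $\varphi_0$ uniquely to a morphism $\Phi^{\mathrm{cyc}} \colon (\AT_D, \AA_D^{\mathrm{cyc}}) \to (\mathfrak{g}, R)$ that is compatible with grafting, the Leibniz rules for $\graft$, and the trace $t$ (which produces the cyclic aromas as divergences). Hence $\Phi$ is already prescribed on every aromatic tree and on every aroma built solely from cycles.

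The second step is to extend $\Phi^{\mathrm{cyc}}$ across stolons. For any two aromatic trees $\gamma_1, \gamma_2 \in \AT_D$, set
\[ \Phi(\langle \gamma_1, \gamma_2 \rangle) := \langle \Phi^{\mathrm{cyc}}(\gamma_1), \Phi^{\mathrm{cyc}}(\gamma_2) \rangle_R, \]
which is consistent with the symmetry on both sides. Extend multiplicatively to products of cyclic aromas and stolons, using that $\AA_D \simeq \AA_D^{\mathrm{cyc}} \otimes \mathrm{Sym}(\AT_D \otimes_{\mathrm{sym}} \AT_D)$ as commutative algebras, the second factor being the polynomial algebra generated by the symmetric pairs $\langle \gamma_1, \gamma_2 \rangle$. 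This extension is unique by the Leibniz rule for the concatenation product, and for any $\tau \in \AT_D$ the relation $\tau \graft \langle \gamma_1, \gamma_2 \rangle = \langle \tau \graft \gamma_1, \gamma_2 \rangle + \langle \gamma_1, \tau \graft \gamma_2 \rangle$ is preserved by construction, since the same identity holds in $(\mathfrak{g}, R)$ by assumption.

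The third step is to verify that $\Phi$ is a morphism of the full structure, by induction on the total number of vertices: grafting $\tau \graft \omega$ for $\omega \in \AA_D$ mixing cycles and stolons is computed via iterated Leibniz rules, which reduces it to grafting into the individual factors, where it is already known to commute with $\Phi$. Uniqueness is then immediate: any morphism must agree with $\varphi_0$ on $D$, be determined on trees by repeated grafting, on cyclic aromas by the trace, and on stolons by the bilinear form, so it must coincide with $\Phi$. The main obstacle I anticipate is the bookkeeping in the third step, namely confirming that the various ways of reducing a graft into a mixed aroma (via different factorisations across concatenation and across the two sides of a stolon) yield the same element of $R$; this requires checking that the Leibniz rules for $\graft$ with respect to $\cdot$ and with respect to $\langle\blank,\blank\rangle_R$ are mutually compatible, which they are by the axioms imposed on a tracial stolonic pre-Lie-Rinehart algebra.
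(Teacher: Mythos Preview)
The paper states this proposition without proof; it is offered as the natural extension of the freeness result for the (non-stolonic) tracial pre-Lie-Rinehart algebra established in \cite{Floystad20tup}, which the authors invoke immediately before introducing the bilinear form $\langle\blank,\blank\rangle$.

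Your strategy --- bootstrap from the Fl{\o}ystad--Manchon freeness and then freely adjoin the stolonic generators via the bilinear form --- is the natural one and is essentially sound. Two points deserve tightening. First, the decomposition $\AA_D \simeq \AA_D^{\mathrm{cyc}} \otimes \mathrm{Sym}(\AT_D \otimes_{\mathrm{sym}} \AT_D)$ overcounts: since $\langle\blank,\blank\rangle$ is $\AA_D$-bilinear, the connected stolonic aromas are parametrised by unordered pairs of rooted trees in $\TT_D$ (carrying no aromas), not by pairs in $\AT_D$; the correct statement is $\AA_D \simeq \AA_D^{\mathrm{cyc}} \otimes \mathrm{Sym}(\TT_D \otimes_{\mathrm{sym}} \TT_D)$. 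Second, your Step~2 formula $\Phi(\langle\gamma_1,\gamma_2\rangle) := \langle\Phi^{\mathrm{cyc}}(\gamma_1),\Phi^{\mathrm{cyc}}(\gamma_2)\rangle_R$ presupposes that $\Phi^{\mathrm{cyc}}$ is already defined on $\gamma_1,\gamma_2$, which fails when these themselves carry stolonic aromas; this is repaired by first defining $\Phi$ on the connected stolons $\langle t_1,t_2\rangle$ with $t_i\in\TT_D$, extending multiplicatively to all of $\AA_D$, and only then using the $\AA_D$-module structure to extend $\Phi$ to the full $\AT_D$. With these adjustments the compatibility checks in your Step~3 (grafting, trace, bilinear form) go through by the induction on the number of vertices that you outline.
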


The classical Guin-Oudom process~\cite{OudomGuin} extends uniquely a pre-Lie product over a vector space~$V$ to a product over the symmetric algebra~$S(V)$. We generalize and use this process to extend uniquely the pre-Lie-Rinehart product~$\graft$ over a~$\AA_D$-module~$\AT_D$ to the symmetric algebra~$\AF_D := S_{\AA_D} (\AT_D)$ of decorated aromatic forests.

\begin{proposition}
\label{prop:GuinOudom-pLR}
    There exists a unique extension of the~$\graft$ product to~$\AF_D$ such that
    \begin{enumerate}
        \item[(i)]~$\omega \mathbf{1} \graft \nu = \omega \nu, \quad$ for~$\omega \in \AA_D, \nu \in \AA_D \sqcup \AT_D$,
        \item[(ii)]~$(\tau \cdot \pi) \graft \nu = \tau \graft (\pi \graft \nu) - (\tau \graft \pi) \graft \nu, \quad$ for~$\tau \in \AT_D, \pi \in \AF_D$,
        \item[(iii)]~$\pi \graft (\mu_1 \cdot \mu_2) = \sum_{(\pi)} (\pi_{(1)} \graft \mu_1) \cdot (\pi_{(2)} \graft \mu_2), \quad$ for~$\mu_1, \mu_2 \in \AF_D$,
    \end{enumerate}
    with deshuffle coproduct~$\Delta_{\AA_D}(\pi) = \sum_{(\pi)} \pi_{(1)} \otimes_{\AA_D} \pi_{(2)}$.
\end{proposition}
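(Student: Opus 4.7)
The plan is to follow the classical Guin-Oudom extension \cite{OudomGuin}, adapted to the pre-Lie-Rinehart setting so as to carry the aromas along.

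For uniqueness, I would argue by induction on the number of tree factors of the left operand. Axiom (i) gives the base case $\mathbf{1} \graft \nu = \nu$ and absorbs aromatic prefactors. For a left operand $\tau \cdot \pi$ with $\tau \in \AT_D$, axiom (ii) expresses $(\tau \cdot \pi) \graft \nu$ in terms of $\pi \graft \nu$, $\tau \graft \pi$, and $(\tau \graft \pi) \graft \nu$, each of which involves strictly fewer trees on the left than $\tau \cdot \pi$. Axiom (iii), applied to a single tree on the left, reduces grafting onto a product $\mu_1 \cdot \mu_2$ to grafting onto each factor separately, so ultimately everything is expressed in terms of the already-specified pre-Lie-Rinehart product on $\AT_D \otimes (\AT_D \sqcup \AA_D)$. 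These three reductions together force a unique value for every $\pi \graft \mu$ with $\pi, \mu \in \AF_D$.

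For existence, I would define the extension by exactly this recursion and verify that the result descends to the symmetric algebra $\AF_D = S_{\AA_D}(\AT_D)$. Two coherence checks are required. The first is commutativity on the left, namely $(\tau_1 \cdot \tau_2) \graft \mu = (\tau_2 \cdot \tau_1) \graft \mu$ for $\tau_1, \tau_2 \in \AT_D$. Expanding both sides with (ii) reduces this to
\[
\tau_1 \graft (\tau_2 \graft \mu) - (\tau_1 \graft \tau_2) \graft \mu = \tau_2 \graft (\tau_1 \graft \mu) - (\tau_2 \graft \tau_1) \graft \mu,
\]
which is precisely the pre-Lie-Rinehart identity when $\mu \in \AT_D \sqcup \AA_D$, and extends to general $\mu \in \AF_D$ by a further induction using (iii) together with cocommutativity of the deshuffle coproduct. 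The second is $\AA_D$-bilinearity, which amounts to the Rinehart derivation axiom $\tau \graft (\omega \cdot \nu) = (\tau \graft \omega) \cdot \nu + \omega \cdot (\tau \graft \nu)$ applied iteratively through the recursion.

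The step I expect to be most delicate is verifying that axiom (iii) really does hold for an arbitrary left operand, not only for a single tree, since the deshuffle coproduct is only $\AA_D$-linear while intermediate graftings can introduce new aromas, including stolons coming from the symmetric bilinear form of Proposition~\ref{proposition:freeness}. A clean way is to induct on the number of trees in $\pi$, split $\pi = \tau \cdot \pi'$, and compare the two recursive expansions of $(\tau \cdot \pi') \graft (\mu_1 \cdot \mu_2)$ obtained by applying (ii) first then (iii), versus (iii) first then (ii). The equality of the two expansions reduces to a cocycle-type compatibility between $\Delta_{\AA_D}$ and $\graft$, which follows from the Leibniz rule and the pre-Lie-Rinehart identity used repeatedly. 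The remainder of the argument is bookkeeping parallel to the classical Guin-Oudom theorem.
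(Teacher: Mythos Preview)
Your proposal is correct and follows essentially the same approach as the paper, adapting the classical Guin-Oudom argument to the pre-Lie-Rinehart setting with the two coherence checks you identify. The paper phrases your second check more concretely as showing that the ideal $\JJ = \langle (\omega\tau \cdot \gamma - \tau \cdot \omega\gamma) \cdot \pi \rangle$ of $S(\AT_D)$ satisfies $\JJ \graft \nu = 0$, via the inductively proved identity $(\omega\tau \cdot \pi) \graft \nu = \omega\big((\tau \cdot \pi) \graft \nu\big)$; your concern about stolons is unnecessary, since stolons are simply elements of $\AA_D$ and are absorbed by left $\AA_D$-linearity like any other aroma.
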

We follow the structure of the proof of Proposition 2.7 of~\cite{OudomGuin} which proves an analogous statement for~$S(\AT_D)$. We check that the relations~$(i),(ii),(iii)$ are well-defined over~$S_{\AA_D}(\AT_D)$.
\begin{proof}
    It follows from~$(i)$,~$(iii)$, and the coassociativity of~$\Delta_{\AA_D}$ that
    \[ \tau \graft \mathbf{1} = 0, \quad \text{and} \quad \tau \graft (\pi_1 \cdots \pi_n) = \sum_{k=1}^n \pi_1 \cdots (\tau \graft \pi_k) \cdots \pi_n. \]
    The relation~$(ii)$ is well-defined with respect to the choice of~$\tau$ using the Lemma 2.5 of~\cite{OudomGuin} which is based on induction on the length of the monomial and the pre-Lie relation. This means that~$(\tau \cdot \pi) \graft \nu$ is well-defined for~$\tau \cdot \pi \in S(\AT_D)$ using~$(i)$ and~$(ii)$. Let~$\JJ$ be an ideal of~$S(\AT_D)$
    \[ \JJ := \langle (\omega \tau \cdot \gamma - \tau \cdot \omega \gamma) \cdot \pi : \omega \in \AA_D, \tau,\gamma \in \AT_D, \pi \in S(\AT_D)\rangle. \] 
    It remains to show that~$\JJ \graft \nu = 0$ which follows from the property~$(\omega \tau \cdot \pi) \graft \nu = \omega (\tau \cdot \pi \graft \nu)$, with~$\omega \in \AA_D$, proved by induction on the length of the monomial~$\pi$. The initial step is shown below for~$\gamma \in \AT_D$ using the~$\AA_D$-linearity in the left operand of~$\graft$:
    \[ (\omega \tau \cdot \gamma) \graft \nu = \omega \tau \graft (\gamma \graft \nu) - (\omega \tau \graft \gamma) \graft \nu = \omega (\tau \cdot \gamma \graft \nu). \]
    Assume the property to be true for monomials shorter than~$\pi = \tau_1 \cdots \tau_n \in S(\AT_D)$ and recall that
    \[ \omega \tau \graft \pi = \sum_{k=1}^n \omega (\tau \graft \tau_k) \cdot \pi_{\hat{k}}, \quad \text{where } \pi_{\hat{k}} := \tau_1 \cdots \tau_{k-1} \tau_{k+1} \cdots \tau_n.  \]
    Then, by induction, we have,
\begin{align*}
(\omega \tau \graft \pi) \graft \nu &= \big( \sum_{k=1}^n \omega (\tau \graft \tau_k) \cdot \pi_{\hat{k}} \big) \graft \nu \\&= \omega \big( \sum_{k=1}^n (\tau \graft \tau_k) \cdot \pi_{\hat{k}} \big) \graft \nu \\&= \omega ((\tau \graft \pi) \graft \nu).
\end{align*}
    This allows us to prove the inductive step:
    \[ (\omega \tau \cdot \pi) \graft \nu = \omega \tau \graft (\pi \graft \nu) - (\omega \tau \graft \pi) \graft \nu = \omega (\tau \cdot \pi \graft \nu). \]
    Therefore, relations~$(i)$ and~$(ii)$ extend~$\graft$ to~$S_{\AA_D}(\AT_D) \otimes (\AA_D \oplus \AT_D) \to (\AA_D \oplus \AT_D)$. Due to the Leibniz rule, the property~$(\omega \pi) \graft \nu = \omega (\pi \graft \nu)$, the cocommutativity and coassociativity of~$\Delta_{\AA_D}$, the relation~$(iii)$ is well-defined. Therefore, it defines~$\graft$ on~$\AF_D = S_{\AA_D}(\AT_D)$.
\end{proof}

Following the D-algebra structure from~\cite{Munthe-Kaas-Wright-Dalgebra}, see also~\cite{Lundervold11hao,Lundervold13bea,Lundervold15oas}, we define the tracial commutative D-algebra~$(\AF_D,\graft)$ of decorated aromatic forests graded by the number of roots.

\begin{definition}
    Let~$(A, \cdot)$ be a unital commutative graded algebra with unit~$\mathbf{1}$. Let~$A$ be equipped with a non-associative product~$\graft$ and let~$x \in A_1$ and~$a, b \in A$ satisfy the relation 
    \[ x \graft (a \cdot b) = (x \graft a) \cdot b + a \cdot (x \graft b). \]
    The triple~$(A, \cdot, \graft)$ is a commutative D-algebra if the following identities are satisfied
    \begin{align*}
        \mathbf{1} \graft a &= a, \\
        a \graft x &\in A_1, \\
        (\omega \cdot a) \graft b &= \omega \cdot (a \graft b), \\
        (x \cdot a) \graft b &= x \graft (a \graft b) - (x \graft a) \graft b,
    \end{align*}
    for~$\omega \in A_0, \; x \in A_1, \; a, b \in A$. It is called tracial if there exists a trace~$t : El_{A_0} (A_1) \to A_0$.
\end{definition}

For a tracial commutative D-algebra~$A$, the pair~$(A_1, A_0)$ is a tracial pre-Lie-Rinehart algebra. This implies that~$(\AF_D, \graft)$ is the free tracial commutative D-algebra since~$(\AT_D, \AA_D)$ is the free tracial pre-Lie-Rinehart algebra and~$(\AF_D, \graft)$ is obtained uniquely using the Guin-Oudom process (Proposition~\ref{prop:GuinOudom-pLR}). We extend the structure of the D-algebra~$(\AF_D, \graft)$ with the~$\AA_D$-bilinear form~$\langle \blank, \blank \rangle : \AT_D \otimes \AT_D \to \AA_D$ and see that it remains free.

A map~$\varphi : A \to A^\prime$ between two tracial commutative D-algebras~$A$ and~$A^\prime$ is a D-algebra morphism if~$\varphi(A_1) \subset A^\prime_1$ and
\begin{align*}
    \varphi (a \cdot b) = \varphi(a) \cdot \varphi(b), &\quad \varphi (a \graft b) = \varphi(a) \graft \varphi(b), \\
    \varphi (\langle x, y \rangle) = \langle \varphi(x), \varphi(y) \rangle, &\quad \varphi(t(\tilde x)) = t(\varphi(\tilde x)),
\end{align*}
for~$a,b \in A$,~$x, y \in A_1$,~$\tilde x \in El_{A_0} (A_1)$.

\begin{ex}
\label{example:D-algebra_vector_fields}
    Let~$\XX$ be the space of vector fields~$\R^d \to \R^d$. The symmetric algebra~$S_{\CC^\infty(\R^d)}(\XX)$ over the ring of~$\CC^\infty(\R^d)$ maps is a commutative tracial D-algebra and represents the space of differential operators in~$\R^d$. The non-associative product is given by the covariant derivation, for example, let~$f,g,h : \R^d \to \R^d$, then,
  \[ (f g) [h] = \sum_{i, j=1}^d f^i g^j h_{ij}, \quad \text{where } h_{ij} := \frac{\partial^2 h}{\partial x_i \partial x_j}. \]
    Divergence of a vector field is~$d(f) = \sum_{i=1}^d f^i_i$ and the bilinear product is the inner product, i.e.,~$\langle f,g \rangle = \sum_{i=1}^d f^i g^i$.
\end{ex}

We note that the algebra of endomorphisms generated by~$\pi \graft \blank : \AF_D \to \AF_D$ for $\pi \in \AF_D$ is isomorphic to the algebra~$(\AF_D, \gl)$ with~$\gl$ denoting the Grossman-Larson product defined as
\[ \pi_1 \graft (\pi_2 \graft \blank) = (\pi_1 \gl \pi_2) \graft \blank , \quad \pi_1, \pi_2 \in \AF_D. \]
Let us obtain a Grossman-Larson~$\AA/_\R$-bialgebra~\cite{RinehartBialgebra} from the commutative D-algebra~$\AF_D$.

\begin{proposition}
\label{GL-Rinehart-bialgebra}
    Let~$\epsilon_{\AA_D} : \AF_D \to \AA_D$ be a~$\AA_D$-linear map such that~$\epsilon_{\AA_D} (\mathbf{1}) = \mathbf{1}$ and $\epsilon_{\AA_D}(\pi) = 0$ for~$\pi \notin \AF_{D,0}$, then~$B_{GL} := (\AF_D, \gl, \mathbf{1}, \Delta_{\AA_D}, \epsilon_{\AA_D})$ is a~$\AA/_\R$-bialgebra~\cite{RinehartBialgebra} called Grossman-Larson~$\AA/_\R$-bialgebra. That is, it satisfies the following properties:
    \begin{itemize}
        \begin{minipage}{0.4\linewidth}
            \item[(1)]~$\epsilon_{\AA_D} (\mathbf{1}) = \mathbf{1}$,
            \item[(2)]~$\Delta_{\AA_D} (\mathbf{1}) = \mathbf{1} \otimes_{\AA_D} \mathbf{1}$,
        \end{minipage}
        \begin{minipage}{0.6\linewidth}
            \item[(3)]~$\epsilon_{\AA_D} (\pi \gl \mu) = \epsilon_{\AA_D} (\pi \gl \epsilon_{\AA_D} (\mu))$,
            \item[(4)]~$\Delta_{\AA_D} (\pi \gl \mu) = \Delta_{\AA_D} (\pi) \gl_\otimes \Delta_{\AA_D}(\mu)$,
        \end{minipage}
    \end{itemize}
    with~$(\pi_{(1)} \otimes_{\AA_D} \pi_{(2)}) \gl_\otimes (\mu_{(1)} \otimes_{\AA_D} \mu_{(2)}) = (\pi_{(1)} \gl \mu_{(1)}) \otimes_{\AA_D} (\pi_{(2)} \gl \mu_{(2)})~$.
\end{proposition}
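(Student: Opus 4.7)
Properties (1) and (2) are immediate from the definitions: $\epsilon_{\AA_D}$ is the $\AA_D$-linear projection onto the degree-zero (aromatic) component of the root grading, and $\Delta_{\AA_D}$ is the $\AA_D$-linear deshuffle coproduct, which sends $\mathbf{1}$ to $\mathbf{1}\otimes_{\AA_D}\mathbf{1}$ by convention.

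For property (3), the key observation is that the Grossman–Larson product respects the root grading: if $\pi\in\AF_{D,n}$ and $\mu\in\AF_{D,m}$, then $\pi\gl\mu\in\AF_{D,n+m}$. This follows from the defining relation $(\pi\gl\mu)\graft\nu = \pi\graft(\mu\graft\nu)$, since $\graft$ preserves the number of roots in its right-hand argument. Hence both sides of (3) vanish unless $\mu\in\AA_D$, in which case $\epsilon_{\AA_D}(\mu)=\mu$ and the identity is tautological after invoking $\AA_D$-linearity of $\epsilon_{\AA_D}$ and the fact that $\mu\in\AA_D$ acts on $\AF_D$ through the scalar action.

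For property (4), which is the core of the proposition, I would first establish the Guin–Oudom–type expression
\[
\pi\gl\mu \;=\; \sum_{(\pi)}\pi_{(1)}\cdot\bigl(\pi_{(2)}\graft\mu\bigr),
\]
derived by induction on the length of $\pi$ as a monomial in $\AT_D$, using axioms (i)–(iii) of Proposition~\ref{prop:GuinOudom-pLR}. From this formula, the multiplicativity of $\Delta_{\AA_D}$ with respect to $\gl$ is reduced to two facts: (a) $\Delta_{\AA_D}$ is an algebra morphism for the concatenation product $\cdot$, which holds because the deshuffle coproduct is defined to be so; and (b) the grafting action intertwines with $\Delta_{\AA_D}$, namely $\Delta_{\AA_D}(\pi\graft\mu) = \sum_{(\pi),(\mu)}(\pi_{(1)}\graft\mu_{(1)})\otimes_{\AA_D}(\pi_{(2)}\graft\mu_{(2)})$, which is precisely axiom (iii) in disguise once one uses coassociativity and cocommutativity of $\Delta_{\AA_D}$. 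Combining (a) and (b) with the displayed formula yields
\[
\Delta_{\AA_D}(\pi\gl\mu) \;=\; \Delta_{\AA_D}(\pi)\;\gl_{\otimes}\;\Delta_{\AA_D}(\mu),
\]
by a direct Sweedler-notation manipulation.

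The main obstacle will be the bookkeeping of the $\AA_D$-linearity in the Sweedler calculation for (b). Aromas can enter either through the tensor factors $\otimes_{\AA_D}$ or through $\graft$-actions that produce aromas when trees are grafted onto aromatic components, and one must verify that these two mechanisms are compatible. The cleanest route is to check the identity first on generators ($\pi,\mu\in\AT_D$), where $\Delta_{\AA_D}(\pi) = \pi\otimes_{\AA_D}\mathbf{1} + \mathbf{1}\otimes_{\AA_D}\pi$ and the calculation reduces to the pre-Lie-Rinehart axioms with the Leibniz rule for the trace, and then to promote the identity to all of $\AF_D$ by the freeness in Proposition~\ref{proposition:freeness} together with the inductive structure supplied by the Guin–Oudom extension.
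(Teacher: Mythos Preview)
Your plan for properties (1), (2) and (4) is essentially the paper's argument. The paper is terse --- for (4) it simply cites the definition of $\gl$ together with axiom (iii) of Proposition~\ref{prop:GuinOudom-pLR} --- and your Guin--Oudom formula $\pi\gl\mu = \sum_{(\pi)}\pi_{(1)}\cdot(\pi_{(2)}\graft\mu)$ together with points (a) and (b) is precisely how one would unpack that.

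There is, however, a genuine error in your argument for (3). The Grossman--Larson product does \emph{not} respect the root grading additively: already $\forest{b}\gl\forest{b} = \forest{b,b} + \forest{b[b]}$ mixes root degrees $2$ and $1$. The defining relation $(\pi\gl\mu)\graft\nu = \pi\graft(\mu\graft\nu)$ only says that both sides act identically on any $\nu$; since $\graft$ collapses the root count of its left argument, this gives no information about the root degree of $\pi\gl\mu$ itself. What is true --- and what the paper uses --- is the weaker statement that every term of $\pi\gl\mu$ has root degree \emph{at least} that of $\mu$. This follows immediately from the very formula you wrote down for (4): in $\pi_{(1)}\cdot(\pi_{(2)}\graft\mu)$ the factor $\pi_{(2)}\graft\mu$ has exactly as many roots as $\mu$, and $\pi_{(1)}$ can only contribute further roots. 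Hence if $\mu\notin\AA_D$ then $\epsilon_{\AA_D}(\pi\gl\mu)=0=\epsilon_{\AA_D}(\pi\gl 0)$, while if $\mu\in\AA_D$ the identity is tautological. So your conclusion survives, but the justification as written is false and the repair is exactly the Guin--Oudom expansion you already have in hand.
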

\begin{proof}
    Since Grossman-Larson product is isomorphic to the composition of maps, it is associative. The properties~$(1)$ and~$(2)$ follow from the definition of the counit and coproduct. Property~$(4)$ can be proven using the definition of the Grossman-Larson product together with the relation~$(iii)$ from Proposition~\ref{prop:GuinOudom-pLR}. To prove the property~$(3)$, we note that~$\epsilon_{\AA_D}(\pi_1 \gl \pi_2)$ is non-zero if and only if~$\pi_2 \in \AA_D$ and is equal to the aromas obtained by grafting all trees of~$\pi_1$ onto~$\pi_2$ in all possible ways.
\end{proof}

Proposition~\ref{GL-Rinehart-bialgebra} can be applied to a general commutative D-algebra if the associativity of the Grossman-Larson product is proven analogously to~\cite[Lemma 2.10]{OudomGuin}. We note that if we exclude all aromas, i.e.~$\AA_D = \{ \mathbf{1}\}$, the~$\AA/_\R$-bialgebra~$B_{GL}$ becomes a graded connected bialgebra which is also a Hopf algebra.
We also note that the product~$\gl_\otimes$ is well-defined over $\Delta_{\AA_D}(\AF_D) \subset \AF_D \otimes_{\AA_D} \AF_D$, but not over~$\AF_D \otimes_{\AA_D} \AF_D$ since~$\gl$ is not~$\AA_D$-linear in its right operand.

\begin{remark}
    We say that~$B_{GL}$ is cocomplete since~$B_{GL} = \bigcup_{n=0}^\infty B_{GL,n}$ and is a free~$\AA_D$-module. An analog of the Cartier-Milnor-Moore theorem is proven in~\cite{RinehartBialgebra} which states that a cocomplete and graded projective~$\AA/_\R$-bialgebra is the universal enveloping Lie-Rinehart algebra of the Lie-Rinehart algebra of its primitive elements. The freeness of the~$\AA_D$-module~$B_{GL}$ implies its graded projectiveness, therefore, the~$\AA/_\R$-bialgebra~$B_{GL}$ is the universal enveloping Lie-Rinehart algebra of the pre-Lie-Rinehart algebra~$(\AT_D, \AA_D)$.
\end{remark}

\begin{remark}
\label{remark:GL_Hopf_algebra}
    If we replace the coalgebra structure of~$B_{GL}$ by the deshuffle coproduct $\Delta : \AF_D \to \AF_D \otimes \AF_D$ and~$\epsilon : \AF_D \to \R$, then we get a Grossman-Larson Hopf algebra dual (up to the symmetry coefficients) to the Hopf algebra mentioned in~\cite[Thm.\ts 4.4]{Bogfjellmo19aso}.
\end{remark}

$\AA/_\R$-bialgebras are also called cocommutative bialgebroids. We show that~$B_{GL}$ defines a Hopf algebroid as introduced in~\cite{Lu_Hopf_algebroids}.
\begin{proposition}
\label{prop:antipode}
    Let~$S_\gl : (\AF_D, \gl) \to (\AF_D, \gl)$ be the algebra anti-isomorphism defined as
    \begin{itemize}
        \item[(i)]~$S_\gl(\omega) := \omega, \quad S_\gl(\omega \pi) := S_\gl(\pi) \gl \omega, \quad$ for~$\omega \in \AA_D, \pi \in \AF_D$,
        \item[(ii)]~$S_\gl(\tau) := - \tau, \quad S_\gl(\tau \pi) := -S_\gl(\pi) \gl \tau - S_\gl(\tau \graft \pi), \quad$ for~$\tau \in \TT_D$.
    \end{itemize}
    Then,~$H_{GL} := (B_{GL}, S_\gl)$ is the Grossman-Larson Hopf algebroid with~$S_\gl$ being called an antipode and satisfying the following conditions where~$\pi \in \AF_D$ and~$\Delta_{\AA_D} (\pi) = \sum_{(\pi)} \pi_{(1)} \otimes_{\AA_D} \pi_{(2)}$,
    \begin{itemize}
        \item[(1)]~$\sum_{(\pi)} S_\gl(\pi_{(1)}) \gl \pi_{(2)} = \mathbf{1}\epsilon_{\AA_D} (S_\gl(\pi))$,
        \item[(2)]~$\sum_{(\pi)} \hat{\pi}_{(1)} \gl S_\gl(\hat{\pi}_{(2)}) = \mathbf{1}\epsilon_{\AA_D} (\pi)$, with~$\gamma (\pi_{(1)} \otimes_{\AA_D} \pi_{(2)}) = \hat{\pi}_{(1)} \otimes \hat{\pi}_{(2)}$,
    \end{itemize}
    where~$\gamma$ is the section of the projection~$P : \AF_D \otimes \AF_D \to \AF_D \otimes_{\AA_D} \AF_D$ that places all aromas on the left side of the tensor product, that is,~$\hat{\pi}_{(2)}$ has no aromas.
\end{proposition}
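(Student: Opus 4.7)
The plan is to prove the result in three stages: well-definedness of $S_\gl$, the anti-isomorphism property $S_\gl(a \gl b) = S_\gl(b) \gl S_\gl(a)$, and then the two antipode identities (1) and (2). All three arguments proceed by induction on the number of tree factors of a monomial in $\AF_D = S_{\AA_D}(\AT_D)$. A central technical tool throughout will be the Guin-Oudom identity $\tau \gl \mu = \tau \cdot \mu + \tau \graft \mu$ for $\tau \in \AT_D$ and $\mu \in \AF_D$, which falls out of Proposition~\ref{prop:GuinOudom-pLR}(ii) applied to $\blank \graft \nu$ and the definition of $\gl$.

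For well-definedness, write $\pi = \omega \tau_1 \cdots \tau_n$ with $\omega \in \AA_D$ and $\tau_i \in \AT_D$. Rule (i) strips off the aroma factor and rule (ii) strips off one tree, so each recursive call decreases $n$. The nontrivial check is that the definition is consistent with the symmetric structure, i.e.\ that $S_\gl(\tau_1 \cdot \tau_2 \cdot \pi')$ equals $S_\gl(\tau_2 \cdot \tau_1 \cdot \pi')$; I would expand both sides via (ii), rewrite mixed terms using the Guin-Oudom identity, and invoke the pre-Lie relation satisfied by $\graft$ to obtain the required cancellation. The anti-isomorphism property is built into rules (i) and (ii) whenever the left operand is an aroma or a tree, and extends by induction in the number of tree factors using associativity of $\gl$.

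For condition (1), I would induct on the number $n$ of tree factors of $\pi$. The base case $\pi = \omega \in \AA_D$ is immediate since $\Delta_{\AA_D}(\omega) = \omega \otimes_{\AA_D} \mathbf{1}$ and $\epsilon_{\AA_D}(S_\gl(\omega)) = \omega$. For a single tree $\tau$ we have $\Delta_{\AA_D}(\tau) = \tau \otimes_{\AA_D} \mathbf{1} + \mathbf{1} \otimes_{\AA_D} \tau$, and the two contributions $-\tau$ and $\tau$ cancel, matching $\epsilon_{\AA_D}(-\tau) = 0$. For the inductive step with $\pi = \tau \cdot \pi'$, I would expand $\Delta_{\AA_D}(\pi)$ by the deshuffle rule, apply rule (ii) to the terms of the form $S_\gl(\tau \cdot \pi'_{(1)})$, and convert $\tau \cdot \pi'_{(2)}$ into $\tau \gl \pi'_{(2)} - \tau \graft \pi'_{(2)}$ via Guin-Oudom. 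Associativity of $\gl$ and a telescoping cancellation then leave a sum in which $\tau \graft$ acts on either factor of the coproduct, to which the inductive hypothesis on $\pi'$ (whose tree count is $n-1$) applies. Condition (2) follows by the same template; here the section $\gamma$ places aromas on the left, so $\AA_D$-linearity of $\gl$ in the left slot pulls out any aroma factor and reduces the check to a pure tree monomial $\tau_1 \cdots \tau_n$ with $n \geq 1$, for which the identity $\sum_I \tau_I \gl S_\gl(\tau_{I^c}) = 0$ is proven by the same induction.

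The main obstacle will be coordinating the aroma bookkeeping with the two conventions at play: rule (i) pushes aromas to the right under $S_\gl$, whereas the section $\gamma$ in condition (2) keeps them on the left. The reconciliation rests on the $\AA_D$-linearity of $\gl$ in its left operand (ensured by Proposition~\ref{prop:GuinOudom-pLR}(i),(iii)) together with the fact that aromas act centrally in $B_{GL}$. The Guin-Oudom identity is the other indispensable ingredient, since it is precisely what translates between the commutative product $\cdot$ appearing in the deshuffle coproduct and the Grossman-Larson product $\gl$ appearing in the antipode identities, making the cancellations transparent.
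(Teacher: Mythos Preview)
Your proposal is correct and follows essentially the same route as the paper: the Guin--Oudom identity $\tau \gl \mu = \tau \cdot \mu + \tau \graft \mu$ drives the induction for condition~(1), and condition~(2) is reduced to the aroma-free Grossman--Larson Hopf algebra over $\FF_D$ via $\AA_D$-linearity of $\gl$ in its left argument, exactly as the paper does. The only organisational difference is that the paper decomposes $\tau\pi = \tau \gl \pi - \tau \graft \pi$ first and then invokes the bialgebra compatibility $\Delta_{\AA_D}(a \gl b) = \Delta_{\AA_D}(a) \gl_\otimes \Delta_{\AA_D}(b)$ from Proposition~\ref{GL-Rinehart-bialgebra} to dispose of the $\tau \gl \pi$ term in one line, whereas you expand the deshuffle directly; the resulting cancellations are identical. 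One small inaccuracy: aromas do \emph{not} act centrally in $B_{GL}$ (in general $\pi \gl \omega \neq \omega\pi$, since grafting onto $\omega$ produces new aromas), but your argument never actually uses centrality---only left $\AA_D$-linearity---so this does not affect correctness.
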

\begin{proof}
    It can be seen that~$S_\gl$ defined this way is an anti-isomorphism due to the associativity of the Grossman-Larson product and the definition which can be rewritten as
    \[ S_\gl(\omega \gl \pi) = S_\gl(\pi) \gl S_\gl(\omega), \quad S_\gl(\tau \gl \pi) = S_\gl(\pi) \gl S_\gl(\tau). \]
    Let us prove that~$S_\gl$ satisfies the condition~$(1)$ and~$(2)$. We start with~$(1)$ by noting that the operation $S_\gl(\blank) \gl \blank : \AF_D \otimes_{\AA_D} \AF_D \to \AF_D$ is well-defined since 
    \[ S_\gl(\omega \pi_{(1)}) \gl \pi_{(2)} = S_\gl(\pi_{(1)}) \gl \omega \gl \pi_{(2)} = S_\gl(\pi_{(1)}) \gl \omega \pi_{(2)}. \] 
    We see that~$(1)$ is satisfied for~$\pi = \omega \in \AA_D$. Let us check that~$(1)$ is satisfied for~$\pi = \omega \tau \in \AT_D$:
    \[ S_\gl(\tau) \gl \omega + S_\gl(\mathbf{1}) \gl \omega \tau = -\tau \omega - \tau \graft \omega + \omega \tau = S_\gl(\tau) \graft \omega = \mathbf{1}\epsilon_{\AA_D} (S_\gl(\omega \tau)), \]
    where~$\tau \in \TT_D$. Let~$\tau \in \TT_D, \pi \in \FF_D, \omega \in \AA_D$, then,~$\tau \pi \omega = \tau \gl \pi \omega - \tau \graft \pi \omega$. We use induction on the length of~$\pi$ and assume that the condition~$(1)$ is satisfied for~$\tau \graft \pi \omega$. We check that the left-hand side of~$(1)$ applied to~$\tau \gl \pi \omega$ is~$0$:
    \begin{align*}
        \sum_{(\tau \gl \pi \omega)} S_\gl((\tau \gl \pi \omega)_{(1)}) \gl (\tau \gl \pi \omega)_{(2)} &=  \sum_{(\tau),(\pi \omega)} S_\gl(\tau_{(1)} \gl \pi_{(1)}) \gl \tau_{(2)} \gl \pi_{(2)} \omega \\
        &= \sum_{(\pi \omega)} S_\gl(\pi_{(1)}) \gl \big(\sum_{(\tau)} S_\gl(\tau_{(1)}) \gl \tau_{(2)} \big) \gl \pi_{(2)} \omega = 0,
    \end{align*}
    since~$\sum_{(\tau)} S_\gl(\tau_{(1)}) \gl \tau_{(2)} = 0$. This implies that left-hand side of~$(1)$ applied to~$\tau \pi \omega$ is
    \begin{align*}
        \sum_{(\tau \pi \omega)} S_\gl((\tau \pi \omega)_{(1)}) \gl (\tau \pi \omega)_{(2)} &= - \mathbf{1} \epsilon_{\AA_D} (S_\gl(\tau \graft \pi \omega)) \\
        &= \mathbf{1} \epsilon_{\AA_D} (S_\gl(\tau \pi \omega)) + \mathbf{1} \epsilon_{\AA_D} (S_\gl(\pi \omega) \gl \tau)\\
        &= \mathbf{1}\epsilon_{\AA_D} (S_\gl(\tau \pi \omega)).
    \end{align*}
    This proves that~$S_\gl$ satisfies the condition~$(1)$. To prove condition~$(2)$, we recall that the Grossman-Larson product is~$\AA_D$-linear in its left operand and that~$\gamma$ places all aromas on the left side of the tensor product. This implies that both sides of the condition~$(2)$ are~$\AA_D$-linear and the condition is reduced to the analogous condition over the Grossman-Larson Hopf algebra over~$\FF_D$. This proves that~$S_\gl$ satisfies the condition~$(2)$.
\end{proof}

The space of decorated aromatic forests with commutative product~$\cdot$ and deshuffle coproduct forms a Hopf algebra~$H := (\AF_D, \cdot, \mathbf{1}, \Delta_{\AA_D}, \epsilon_{\AA_D}, S)$. The Hopf algebra~$H$ together with the product~$\graft : H \otimes H \to H$ forms a pre-Hopf algebroid which is a generalization of the pre-Hopf algebra~\cite{post-Hopf-algebra} that satisfies the following conditions for~$\pi, \mu, \eta \in H$, a map~$\beta : H \to H$, and a section~$\gamma$ of the projection~$P : H \otimes H \to H \otimes_{\AA_D} H$ with~$\gamma(\pi_{(1)} \otimes_{\AA_D} \pi_{(2)}) = \hat{\pi}_{(1)} \otimes \hat{\pi}_{(2)}$:
\begin{itemize}
    \item[(1)]~$\pi \graft (\mu \cdot \eta) = (\pi_{(1)} \graft \mu) \cdot (\pi_{(2)} \graft \eta)$,
    \item[(2)]~$\pi \graft (\mu \graft \eta) = (\pi_{(1)} \cdot (\pi_{(2)} \graft \mu)) \graft \eta$,
    \item[(3)]~$\hat{\pi}_{(1)} \graft (\beta(\hat{\pi}_{(2)}) \graft \blank) = \epsilon_{\AA_D}(\pi) \id$,
    \item[(4)]~$\beta(\pi_{(1)}) \graft (\pi_{(2)} \graft \blank) = \epsilon_{\AA_D}(\beta(\pi)) \id$.
\end{itemize}
Conditions~$(1)$ and~$(2)$ follow from the definition of the D-algebra and the conditions~$(3)$ and~$(4)$ are satisfied for~$\beta$ and~$\gamma$ being the anitipode~$S_\gl$ and~$\gamma$ from Proposition~\ref{prop:antipode}. We present an alternative proof for~$(8)$ of~\cite[Lemma 2.3]{post-Hopf-algebra}.
\begin{lemma}
    For all~$\pi, \mu \in H$, we have~$\pi \graft S(\mu) = S(\pi \graft \mu)$.
\end{lemma}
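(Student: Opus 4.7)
The plan is to combine two observations, one about the antipode $S$ of the Hopf algebra $H$ and one about the root-grading preserved by $\graft$.

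First, since $H=(\AF_D,\cdot,\Delta_{\AA_D},\epsilon_{\AA_D},S)$ is a commutative $\AA_D$-Hopf algebra with cocommutative deshuffle coproduct, every $\tau\in\AT_D$ is primitive while every $\omega\in\AA_D$ satisfies $\Delta_{\AA_D}(\omega)=\omega\otimes_{\AA_D}\mathbf{1}$. The antipode axiom then forces $S(\omega)=\omega$ and $S(\tau)=-\tau$, and extending by $\AA_D$-multiplicativity,
\[ S(\omega\cdot\tau_1\cdots\tau_k)=(-1)^k\,\omega\cdot\tau_1\cdots\tau_k. \]

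Second, I claim that the grafting operator $\pi\graft\blank$ preserves the number of rooted components: if $\mu\in\AF_D$ contains exactly $k$ trees, then so does $\pi\graft\mu$. By iterating the Leibniz property (iii) of Proposition~\ref{prop:GuinOudom-pLR}, it suffices to show $\pi\graft\tau\in\AA_D\cdot\AT_D$ for every $\tau\in\AT_D$ and $\pi\graft\omega\in\AA_D$ for every $\omega\in\AA_D$. Both are proved by induction on the monomial length of $\pi$: the base cases $\pi\in\AA_D$ and $\pi\in\AT_D$ are immediate from (i) and from the definitions of grafting and divergence; for $\pi=\tau'\cdot\pi'$, property (ii) rewrites
\[ (\tau'\cdot\pi')\graft\nu=\tau'\graft(\pi'\graft\nu)-(\tau'\graft\pi')\graft\nu, \]
and both terms on the right fall into the desired subspaces by the induction hypothesis, using that a single tree $\tau'$ acts as a derivation and therefore preserves the tree count of its argument.

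Combining the two observations, for $\mu=\omega\cdot\tau_1\cdots\tau_k$ the element $\pi\graft\mu$ contains exactly $k$ trees, so $\AA_D$-linearity of $S$ gives $S(\pi\graft\mu)=(-1)^k(\pi\graft\mu)$; on the other hand $\R$-linearity of $\graft$ in its right operand gives $\pi\graft S(\mu)=(-1)^k(\pi\graft\mu)$, and the two expressions coincide, yielding the lemma. The only step requiring genuine combinatorial work is the root-preservation induction; the remainder is a direct consequence of the commutative $\AA_D$-Hopf structure of $H$.
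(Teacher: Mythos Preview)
Your proof is correct. The paper's argument proceeds by induction on the number of tree factors of $\mu$: writing $\mu=\tau_1\cdots\tau_n$, it expands $\pi\graft\mu=\sum_{(\pi)}(\pi_{(1)}\graft\tau_1)\,(\pi_{(2)}\graft(\tau_2\cdots\tau_n))$ via property~(iii), applies multiplicativity of $S$ together with the D-algebra axiom $\pi_{(1)}\graft\tau_1\in\AT_D$, and invokes the induction hypothesis on the shorter factor $\tau_2\cdots\tau_n$. Your route instead packages both ingredients into a single structural observation: $\pi\graft\blank$ preserves the grading of $H$ by root-count, and $S$ acts on the degree-$k$ component as $(-1)^k$, so the two sides agree termwise. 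The paper's induction stays closer to the Hopf-algebra formalism; your version is more conceptual and makes the underlying reason transparent. One minor shortening: the D-algebra axiom $a\graft x\in A_1$ already gives $\pi\graft\tau\in\AT_D$ directly, so only the claim $\pi\graft\omega\in\AA_D$ genuinely needs the induction you sketch.
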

\begin{proof}
    Consider~$\mu = \tau \in \AT_D$, then~$\pi \graft \tau \in \AT_D$ and~$\pi \graft S(\tau) = - \pi \graft \tau = S(\pi \graft \tau)$. Assume the statement is true for all monomials shorter than~$\mu = \tau_1 \cdots \tau_n$ for~$\tau_i \in \AT_D$, then,
    \begin{align*}
        S(\pi \graft \tau_1 \cdots \tau_n) &= - \sum_{(\pi)} (\pi_{(1)} \graft \tau_1) S(\pi_{(2)} \graft \tau_2 \cdots \tau_n) \\
        &= - \sum_{(\pi)} (\pi_{(1)} \graft \tau_1) (\pi_{(2)} \graft S(\tau_2 \cdots \tau_n)) \\
        &= - \pi \graft (\tau_1 S(\tau_2 \cdots \tau_n)) = \pi \graft S(\mu).
    \end{align*}
    This finishes the proof.
\end{proof}

We check that the subadjacent Hopf algebra with antipode~$\hat{S}_\gl$ defined in~\cite[Thm.\ts 2.4]{post-Hopf-algebra} corresponds to the Grossman-Larson Hopf algebroid by showing that the antipodes coincide.
\begin{lemma}
  Let~$\hat{S}_\gl$ be the antipode defined in~\cite[Thm.\ts 2.4]{post-Hopf-algebra}, that is,
    \[ \hat{S}_\gl (\pi) := \sum_{(\pi)} \hat{S}_\gl (\pi_{(1)}) \graft S(\pi_{(2)}), \quad \text{for } \pi \in H. \]
    Then,~$\hat{S}_\gl = S_\gl$ where~$S_\gl$ is defined in Proposition~\ref{prop:antipode}.
\end{lemma}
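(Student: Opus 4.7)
The plan is to show $\hat{S}_\gl = S_\gl$ by verifying that $\hat{S}_\gl$ satisfies the recursive relations defining $S_\gl$ in Proposition~\ref{prop:antipode}, proceeding by induction on the number of trees in $\pi \in \AF_D$. A one-line alternative is to invoke uniqueness of the antipode: both $S_\gl$ and $\hat{S}_\gl$ serve as antipodes of the Grossman--Larson $\AA/_\R$-bialgebra $B_{GL}$, the former by Proposition~\ref{prop:antipode} and the latter by the cited result from the post-Hopf algebra reference, so they must coincide. I prefer the constructive proof since it uses only the lemma just established and the Guin--Oudom identities from Proposition~\ref{prop:GuinOudom-pLR}.

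For the base cases: when $\pi = \mathbf{1}$, both maps return $\mathbf{1}$. When $\pi = \omega \in \AA_D$, I use $\Delta_{\AA_D}(\omega) = \mathbf{1} \otimes_{\AA_D} \omega$ and the identity $S(\omega) = \omega$, which is forced by the counit axiom of the $\AA/_\R$-bialgebra since aromas are grouplike with counit the identity on $\AA_D$, to obtain $\hat{S}_\gl(\omega) = \mathbf{1} \graft \omega = \omega = S_\gl(\omega)$. When $\pi = \tau \in \TT_D$, primitivity of $\tau$ combined with $\hat{S}_\gl(\tau) \graft \mathbf{1} = 0$, which follows from the root-count grading preservation $\hat{S}_\gl(\TT_D) \subset \AT_D$ together with the Guin--Oudom rules implying $\sigma \graft \mathbf{1} = 0$ whenever $\sigma$ contains a tree, yields $\hat{S}_\gl(\tau) = -\tau = S_\gl(\tau)$.

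For the inductive step on $\pi = \omega \mu$ with $\omega \in \AA_D$, I choose the coproduct representative placing $\omega$ on the right of the tensor product and use $S(\omega) = \omega$, Guin--Oudom rule (iii), and the induction hypothesis to recognize the defining sum as $\hat{S}_\gl(\mu) \gl \omega$, matching Proposition~\ref{prop:antipode}(i). For $\pi = \tau \mu$ with $\tau \in \TT_D$, I expand $\Delta_{\AA_D}(\tau \mu)$ via multiplicativity and the primitivity of $\tau$, then apply the previous lemma $\tau \graft S(\nu) = S(\tau \graft \nu)$ together with the identity $\tau \cdot \mu = \tau \gl \mu - \tau \graft \mu$, which follows from the Grossman--Larson formula $\tau \gl \mu = \sum \tau_{(1)} \cdot (\tau_{(2)} \graft \mu)$ applied to the primitive $\tau$. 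The resulting terms reorganize into $-\hat{S}_\gl(\mu) \gl \tau - \hat{S}_\gl(\tau \graft \mu)$, exactly matching Proposition~\ref{prop:antipode}(ii). The main obstacle is the combinatorial bookkeeping of cross-terms generated by the multiplicativity of $\Delta_{\AA_D}$ and tracking how they split between the two summands of $S_\gl(\tau \mu)$; should this become unwieldy, the uniqueness-of-antipode argument in $B_{GL}$ finishes the proof at once.
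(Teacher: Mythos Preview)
Your proposal is correct and follows essentially the same route as the paper: both argue by induction on the number of tree factors, establish the base cases $\hat{S}_\gl(\omega)=\omega$ and $\hat{S}_\gl(\tau)=-\tau$, and then verify the two recursive relations from Proposition~\ref{prop:antipode} for $\omega\pi$ and $\tau\pi$ using the Guin--Oudom rule (iii), the previous lemma $\pi\graft S(\mu)=S(\pi\graft\mu)$, and the identity $\tau\cdot\mu=\tau\gl\mu-\tau\graft\mu$. The paper additionally invokes that $S_\gl$ is a coalgebra homomorphism to streamline the $\omega\pi$ step, whereas you handle it by choosing a convenient coproduct representative; both lead to the same computation.
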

\begin{proof}
  We see that~$\hat{S}_\gl (\omega) = \omega$ and~$\hat{S}_\gl (\tau) = -\tau$ for~$\omega \in \AA_D$ and~$\tau \in \TT_D$. We check by induction and using the fact that~$S_\gl$ is a coalgebra homomorphism that~$\hat{S}_\gl (\omega \pi) = S_\gl (\omega \pi)$ for~$\pi \in \AF_D$:
    \begin{align*}
        \hat{S}_\gl (\omega \pi) &= \sum_{(\pi)} S_\gl (\pi_{(1)}) \graft S(\omega \pi_{(2)}) = \sum_{(\pi)^2} (S_\gl (\pi_{(1)}) \graft \omega) (S_\gl (\pi_{(2)}) \graft S(\pi_{(3)})) \\
        &= \sum_{(\pi)} (S_\gl (\pi_{(1)}) \graft \omega) S_\gl (\pi_{(2)}) = S_\gl (\pi) \gl \omega.
    \end{align*}
    We use the same properties to show that~$\hat{S}_\gl (\tau \pi) = S_\gl (\tau \pi)$:
    \begin{align*}
        \hat{S}_\gl (\tau \pi) &= S_\gl (\pi_{(1)}) \graft S(\tau \pi_{(2)}) + S_\gl (\tau \pi_{(1)}) \graft S(\pi_{(2)}) \\
        &= - S_\gl (\pi_{(1)}) \graft \tau S(\pi_{(2)}) - (S_\gl (\pi_{(1)}) \gl \tau) \graft S(\pi_{(2)}) - S_\gl(\tau \graft \pi_{(1)}) \graft S(\pi_{(2)}) \\
        &= - S_\gl (\pi) \gl \tau - S_\gl (\pi_{(1)}) \graft S(\tau \graft \pi_{(2)}) - S_\gl(\tau \graft \pi_{(1)}) \graft S(\pi_{(2)}) \\
        &= - S_\gl (\pi) \gl \tau - S_\gl (\tau \graft \pi),
    \end{align*}
    where we omit writting the sums to simplify the notation.
    Therefore,~$\hat{S}_\gl = S_\gl$ following the definition from Proposition~\ref{prop:antipode}.
\end{proof}
Following~\cite[Def.\ts 1.1]{Hopf-braces}, the Hopf algebra~$H$ together with the Hopf algebroid~$H_{GL}$ forms a generalization of the Hopf brace, that is, the following compatibility condition is satisfied
\[ \pi \gl (\mu \cdot \eta) = \sum_{(\pi)^2} (\pi_{(1)} \gl \mu) \cdot S(\pi_{(2)}) \cdot (\pi_{(3)} \gl \eta), \]
with~$\pi, \mu, \eta \in \AF_D$ and~$\Delta^2 (\pi) = \sum_{(\pi)^2} \pi_{(1)} \otimes \pi_{(2)} \otimes \pi_{(3)}$. A proof can be found in~\cite[Thm.\ts 2.13]{post-Hopf-algebra}.

\subsection{Decorated clumped forests}
\label{sec:decorated_clumped_forests}

Recall that decorated clumped forests are defined as a symmetric algebra~$\CF_D = S(\AT_D)$ over~$\R$ in Section~\ref{section:Decorated aromatic forests}.
We use the Guin-Oudom process~\cite{OudomGuin} to define the product~$\graft$ on~$\CF_D$. The commutative D-algebra~$(\CF_D, \graft)$ that we obtain in this way has~$\CF_{D,0} = \{\mathbf{1}\}$ and is in many ways similar to the commutative D-algebra of classical forests~$(\FF_D, \graft)$.
Decorated clumped forests have a convenient algebraic structure described in the following result.
\begin{theorem}
\label{thm:CF_Hopf_alg}
    The Grossman-Larson~$\AA/_\R$-bialgebroid of~$(\CF_D, \graft)$ is a Hopf algebra dual up to the symmetry to the Butcher-Connes-Kreimer Hopf algebra over clumped forests.
\end{theorem}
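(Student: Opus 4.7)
The plan is to derive the Hopf algebra structure of $\CF_D$ directly from the general framework of Section~\ref{sec:D-algebra} applied to $(\CF_D,\graft)$, and then to establish the announced duality with the Butcher-Connes-Kreimer Hopf algebra by matching grafting multiplicities with admissible-cut counts.

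First, I would apply Proposition~\ref{GL-Rinehart-bialgebra} to $(\CF_D,\graft)$ to obtain a Grossman-Larson $\AA/_\R$-bialgebroid structure with product $\gl$ and coproduct $\Delta_{\AA_D}$. The distinguishing feature of $\CF_D$ with respect to $\AF_D$ is that $\CF_D := S(\AT_D)$ is a symmetric algebra over $\R$ rather than over $\AA_D$, so that aromas are always attached to their rooted components and the grade-zero part collapses to $\CF_{D,0} = \R\cdot\mathbf{1}$. Consequently the bialgebroid reduces to an ordinary $\R$-bialgebra: the coproduct $\Delta_{\AA_D}$ becomes the plain deshuffle coproduct over $\R$, and grading by the total number of vertices exhibits $(\CF_D,\gl,\Delta)$ as a connected graded bialgebra, hence a Hopf algebra via the standard recursive antipode formula.

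Next, I would define the Butcher-Connes-Kreimer coproduct on $\CF_D$ in analogy with Definition~\ref{def:BCK}, using the convention that the aromas of each clumped component stay attached to the rooted part of that component: $\Delta_{BCK}(r) := \sum_{r_0 \subset r}(r \setminus r_0) \otimes r_0$, where $r_0$ ranges over admissible rooted subforests and the aromas of each clumped component are placed on the $r\setminus r_0$ side. The duality claim then reduces to the identity $\langle p \gl q, r\rangle_\sigma = \langle p \otimes q, \Delta_{BCK}(r)\rangle_\sigma$ for the symmetry-weighted pairing $\langle p,q\rangle_\sigma := \sigma(p)\,\delta_{p,q}$. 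Unfolding the Guin-Oudom formulas of Proposition~\ref{prop:GuinOudom-pLR} shows that $p \gl q$ decomposes $p$ into its clumped components and grafts each either onto a vertex of $q$ or sets it aside, with aromas travelling along with their component; this is exactly dual to choosing an admissible cut of $r$, identifying $p$ with the subforest above the cut (together with the aside components) and $q$ with the rooted complement.

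The main obstacle is the combinatorial bookkeeping needed to ensure that aroma attachment is consistent across the duality. The proof hinges on the observation that in the Guin-Oudom grafting on $\CF_D$ aromas never detach from their rooted components, which matches exactly the BCK convention that aromas stay on the rooted side of an admissible cut. Once this packaging is fixed, the matching of grafting multiplicities and admissible-cut counts reduces to the classical Connes-Kreimer/Grossman-Larson duality for bare forests, with the symmetry factors $\sigma(\pi)$ cancelling in the usual way. Freeness of $(\AT_D,\AA_D)$ as a tracial stolonic pre-Lie-Rinehart algebra (Proposition~\ref{proposition:freeness}), lifted to a freeness property of $(\CF_D,\graft)$ as a commutative D-algebra through Proposition~\ref{prop:GuinOudom-pLR}, ensures that verifying the identity on the generators is enough.
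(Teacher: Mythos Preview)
Your high-level approach coincides with the paper's: observe that $\CF_{D,0}=\R\cdot\mathbf{1}$ so that the $\AA/_\R$-bialgebroid collapses to a connected graded bialgebra, hence a Hopf algebra, and then invoke the classical Grossman-Larson/Butcher-Connes-Kreimer duality. The paper's own proof is in fact much terser than yours---it simply records these two observations and refers to Hoffman for the duality.

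There is, however, a genuine slip in your elaboration. You assert (inconsistently: first ``stay attached to the rooted part'', then ``on the $r\setminus r_0$ side'') that aromas sit on a \emph{fixed} side of the BCK coproduct on clumped forests. Neither convention is the dual of $\gl$. Take $\tau=\forest{(b),b}$ and $\gamma=\forest{b}$ in $\AT_D$: both $\tau\gl\gamma=\forest{(b),b}\cdot\forest{b}+\forest{(b),b[b]}$ and $\gamma\gl\tau=\forest{b}\cdot\forest{(b),b}+\forest{(b[b]),b}+\forest{(b),b[b]}$ contain the term $\forest{(b),b[b]}$, so $\Delta_{BCK}(\forest{(b),b[b]})$ must contain both $\forest{(b),b}\otimes\forest{b}$ and $\forest{b}\otimes\forest{(b),b}$. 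More generally, the grafting on $\CF_D$ attaches roots also onto aroma vertices and carries the aromas of the left operand into the result; dually, the BCK coproduct on clumped forests must allow admissible cuts to pass through aromas and must sum over all ways of distributing intact aromas between the two tensor factors, in the same spirit as Definition~\ref{def:BCK} for $\EAF$. Once this is the definition, the pairing identity $\langle p\gl q,r\rangle_\sigma=\langle p\otimes q,\Delta_{BCK}(r)\rangle_\sigma$ does reduce to Hoffman's argument; with your stated aroma packaging it would fail already on the example above.
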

\begin{proof}
    As~$\CF_{D,0} = \{\one\}$, the~$\AA/_\R$-bialgebroid structure reduces to a graded connected bialgebra, that is, to a Hopf algebra. Its duality to the corresponding Butcher-Connes-Kreimer Hopf algebra over clumped forests can be seen by following the proof for classical forests from~\cite{Hoffman}.
\end{proof}
 
Let us consider the following inner product for~$\pi, \mu \in \CF_D$ or~$\AF_D$:
\[ \langle \pi, \mu \rangle_\sigma := \begin{cases} \sigma(\pi), &\text{if } \pi = \mu, \\ 0, &\text{otherwise}. \end{cases} \] 
We use it to obtain the following duality between concatenation product and deshuffle coproduct.
\begin{lemma}
\label{lemma:concat_deshuffle_duality}
    Let~$\cdot$ be the concatenation product and let~$a,b \in \CF^*_D$ or~$\AF^*_D$, then,
    \[ \delta_\sigma(a) \cdot \delta_\sigma(b) = \delta_\sigma(a \odot b), \quad \text{where } a \odot b = (a \otimes b) \circ \Delta, \]
    where~$\Delta: \CF_D \to \CF_D \otimes \CF_D$ or~$\Delta: \AF_D \to \AF_D \otimes \AF_D$ is the~$\R$-linear deshuffle coproduct.
\end{lemma}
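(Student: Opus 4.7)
The plan is to verify the identity coefficient-by-coefficient in the forest basis. Both $\CF_D$ and $\AF_D$ are symmetric algebras over~$\R$: $\CF_D = S(\AT_D)$, and $\AF_D$ is the symmetric algebra on the generating set consisting of decorated aromatic trees together with connected aromas. A single computation therefore handles the two cases.

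Fix a basis element~$\nu$ and write it canonically as $\nu = \tau_1^{k_1}\cdots\tau_r^{k_r}$ with distinct generators~$\tau_i$. Expanding the product of series, the coefficient of~$\nu$ in $\delta_\sigma(a)\cdot\delta_\sigma(b)$ is
\[
\sum_{(\pi,\mu):\,\pi\cdot\mu=\nu}\frac{a(\pi)\,b(\mu)}{\sigma(\pi)\,\sigma(\mu)},
\]
while by definition the coefficient of~$\nu$ in $\delta_\sigma(a\odot b)$ equals $(a\odot b)(\nu)/\sigma(\nu) = \sum_{(\nu)} a(\nu_{(1)})\, b(\nu_{(2)})/\sigma(\nu)$. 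The deshuffle coproduct on a symmetric algebra expands on the canonical form of~$\nu$ as
\[
\Delta(\nu) = \sum_{0\leq j_i\leq k_i}\,\prod_{i=1}^r\binom{k_i}{j_i}\,(\tau_1^{j_1}\cdots\tau_r^{j_r})\otimes(\tau_1^{k_1-j_1}\cdots\tau_r^{k_r-j_r}),
\]
so each pair $(\pi,\mu)$ with $\pi\cdot\mu=\nu$ appears in the expansion of $(a\odot b)(\nu)$ with multiplicity $\prod_i\binom{k_i}{j_i}$.

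The matching of the two coefficients then reduces to the elementary symmetry identity $\sigma(\tau_1^{k_1}\cdots\tau_r^{k_r}) = \prod_i k_i!\,\sigma(\tau_i)^{k_i}$, which yields $\sigma(\nu)/(\sigma(\pi)\,\sigma(\mu)) = \prod_i\binom{k_i}{j_i}$, exactly cancelling the binomial multiplicity. I anticipate no substantive obstacle: conceptually the lemma expresses the standard self-duality, under the symmetry-weighted pairing $\langle\blank,\blank\rangle_\sigma$ defined just above the statement, of the commutative-cocommutative symmetric-algebra Hopf algebra, and the only step requiring real care is the bookkeeping of repeated generators when~$\nu$ involves repeated trees or repeated connected aromas.
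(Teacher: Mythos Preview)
Your proof is correct and follows essentially the same idea as the paper's own one-line argument, which simply asserts the adjointness $\langle \pi \cdot \mu, \eta \rangle_\sigma = \langle \pi \otimes \mu, \Delta(\eta) \rangle_\sigma$ and leaves the verification to the reader; you have spelled out that verification explicitly via the binomial expansion of the deshuffle coproduct and the multiplicativity of~$\sigma$ on connected components. One small terminological point: in the paper's conventions an \emph{aromatic} tree (an element of~$\AT_D$) may already carry aromas, so the correct $\R$-algebra generators of~$\AF_D$ are the connected components, i.e.\ trees in~$\TT_D$ together with connected aromas, not~$\AT_D$ together with connected aromas---but your argument only uses that the generators are the connected components, so nothing is affected.
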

\begin{proof}
    We check that~$\langle \pi \cdot \mu, \eta \rangle_\sigma = \langle \pi \otimes \mu, \Delta(\eta) \rangle_\sigma$ for~$\pi, \mu, \eta \in \CF_D$ or~$\AF_D$.
\end{proof}

Let the map~$\Phi : (\CF_D, \graft) \to (\AF_D, \graft)$ be a commutative D-algebra morphism that "forgets" the clumping, for example,
\[ \Phi(\forest{(b),b} \cdot \forest{b[b]}) = \Phi(\forest{b}\cdot\forest{(b),b[b]}) = \forest{(b),b,b[b]}. \]
We define~$\Phi^* : \AF_D \to \CF_D$ as~$(\Phi \circ \delta_\sigma)(a) = \delta_\sigma(a \circ \Phi^*)$ with~$a \in \CF_D^*$, in particular,~$\Phi^*$ is the adjoint of~$\Phi$ with respect to the~$\langle \blank, \blank \rangle_\sigma$ inner product.
Let us consider the exponential maps
\begin{align*}
    \exp^\cdot : \AT_D \to \AF_D, & \quad \exp^\odot : \AT_D^* \to \AF_D^*, \\
    \exp^\cdot_C : \AT_D \to \CF_D, & \quad \exp^\odot_C : \AT_D^* \to \CF_D^*.
\end{align*}
Using Lemma~\ref{lemma:concat_deshuffle_duality}, we obtain for~$a_0 \in \AT_D^*$ the following identities
\[ \exp^\cdot(\delta_\sigma(a_0)) = \delta_\sigma(\exp^\odot(a_0)), \quad \exp^\cdot_C(\delta_\sigma(a_0)) = \delta_\sigma(\exp^\odot_C(a_0)), \]
with the functionals~$\exp^\odot(a_0)$ and~$\exp^\odot_C(a_0)$ being characterized in Propositon~\ref{prop:exp_odot}.
\begin{proposition}
\label{prop:exp_odot}
    Let~$a_0 \in \AT^*$ be an infinitisimal character and let
    \[ a_e := \exp^\odot (a_0), \quad a_c := \exp^\odot_C(a_0). \]
    Then,~$a_c \in \CF_D^*$ is a character of~$(\CF_D, \cdot)$ and~$a_e = a_c \circ \Phi^*$ with~$a_e \in \AF_D^*$.
\end{proposition}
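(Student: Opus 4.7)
My plan is to establish the two assertions of the proposition independently. For the character property of $a_c$, I will invoke the standard Hopf-algebraic correspondence between infinitesimal characters and characters via convolution exponentiation. Since $\CF_D = S(\AT_D)$ is the symmetric (polynomial) algebra on aromatic trees, the triple $(\CF_D, \cdot, \Delta)$ with the concatenation product and the deshuffle coproduct forms a graded connected commutative Hopf algebra whose primitive elements are precisely the generators $\AT_D$. Extending $a_0 \in \AT_D^*$ by zero on monomials of length different from one makes it an infinitesimal character of this Hopf algebra, so that $a_c = \exp^\odot_C(a_0)$ is automatically a character of $(\CF_D, \cdot)$.

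For the identity $a_e = a_c \circ \Phi^*$, I will pass through the isomorphism $\delta_\sigma$ and prove the equivalent assertion $\delta_\sigma(a_e) = \delta_\sigma(a_c \circ \Phi^*)$. The right-hand side equals $\Phi(\delta_\sigma(a_c))$ by the very definition of $\Phi^*$, while the two identities recalled just before the proposition (consequences of Lemma~\ref{lemma:concat_deshuffle_duality}) give
\[ \delta_\sigma(a_e) = \exp^\cdot(\delta_\sigma(a_0)) \quad \text{in } \overline{\AF}_D, \qquad \delta_\sigma(a_c) = \exp^\cdot_C(\delta_\sigma(a_0)) \quad \text{in } \overline{\CF}_D. \]
It therefore suffices to verify that $\Phi\bigl(\exp^\cdot_C(\delta_\sigma(a_0))\bigr) = \exp^\cdot(\delta_\sigma(a_0))$. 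This follows from the facts that $\Phi$ is an algebra morphism for the two concatenation products and acts as the identity on the common generating subspace $\AT_D$ (an aromatic tree is the same element whether viewed in $\CF_D$ or in $\AF_D$), so that in particular $\Phi(\delta_\sigma(a_0)) = \delta_\sigma(a_0)$ and $\Phi$ commutes with the formal concatenation exponential term by term.

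The main point requiring care is this interaction of $\Phi$ with the two concatenation algebras, since $\CF_D$ is symmetric over the base field $\R$ whereas $\AF_D$ is symmetric over the aroma ring $\AA_D$, and $\Phi$ is not $\AA_D$-linear in the usual sense. However, $\Phi$ is a commutative D-algebra morphism by construction and therefore respects the concatenation product; moreover the series $\exp^\cdot_C(\delta_\sigma(a_0))$ only involves concatenations of elements of $\AT_D$, so no $\AA_D$-linearity issue intervenes. Once this is secured, applying the inverse of $\delta_\sigma$ to the resulting equality of formal power series yields $a_e = a_c \circ \Phi^*$.
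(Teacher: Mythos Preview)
Your proof is correct and follows essentially the same route as the paper. For the character property of $a_c$ the paper gives a direct computation, evaluating $\exp^\odot_C(a_0)$ on a monomial $\tau_1\cdots\tau_n$ and obtaining $a_0(\tau_1)\cdots a_0(\tau_n)$, whereas you invoke the general Hopf-algebraic principle; both arguments are standard and equivalent here. For the identity $a_e = a_c\circ\Phi^*$ your argument coincides with the paper's (which simply records the identity $\exp^\cdot(\delta_\sigma(a_0)) = \Phi(\exp^\cdot_C(\delta_\sigma(a_0)))$), and you have in fact spelled out the justification more fully than the paper does.
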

\begin{proof}
    We prove that~$a_c := \exp^\odot_C (a_0)$ is a character of~$\CF_D$ by considering a decorated clumped forest~$\pi = \tau_1 \cdots \tau_n \in \CF_D$ where~$\tau_i \in \AT_D$. Then,
    \begin{align*}
        \exp_C^\odot(a_0) (\pi) &= \frac{1}{n!} \cdot (a_0 \otimes \cdots \otimes a_0) (\Delta^{n-1} (\pi)) \\
        &= \frac{1}{n!} \cdot \sum_{\sigma \in S_n} a_0(\tau_{\sigma(1)}) \cdots a_0 (\tau_{\sigma(n)}) = a_0(\tau_1) \cdots a_0(\tau_n).
    \end{align*}
    We prove~$a_e = a_c \circ \Phi^*$ by using the identity~$\exp^\cdot(\delta_\sigma(a_0)) = \Phi \big( \exp^\cdot_C(\delta_\sigma(a_0)) \big)$.
\end{proof}

We define~$F_D$ from Section~\ref{sec:S-series} over~$\CF_D$ by~$F_D := F_D \circ \Phi$ where we use the same notation for the morphism over clumped and aromatic forests. This way, we obtain S\nobreakdash-series over decorated clumped forests. We note that, following the definition of~$\Phi^*$, any S\nobreakdash-series~$S(a)$ with~$a \in \CF_D^*$ is identical to the S\nobreakdash-series over decorated aromatic forests~$S(a \circ \Phi^*)$. Moreover, given any functional~$a \in \AF_D^*$, there exists a functional~$a_C \in \CF_D^*$ such that~$a = a_C \circ \Phi^*$, since~$\Phi^*$ is injective. A possible definition of~$a_C$ is
\[    a_C(\pi) = \frac{1}{n^m} a(\Phi(\pi)), \quad \text{for } \pi \in \CF_D,\]
where~$n$ is the number of rooted components and~$m$ is the number of aromas.

We note that a tree~$\tau \in \TT_D$ induces a map~$\tau^\prime : \AA_D \to \AT_D$ with~$\tau^\prime(\omega) = \omega \tau$ for~$\omega \in \AA_D$. We can extend~$(\blank)^\prime$ to~$\FF_D$ in two possible ways, that is, for a~$\pi \in \FF_D$, we have two maps,
\[ \pi^\prime : \AA_D \to \AF_D, \quad \pi^{\prime \prime} : \AA_D \to \CF_D, \]
defined as
\[ (\pi \cdot \mu)^\prime(\omega) = \omega \pi \mu, \quad (\pi \cdot \mu)^\dprime (\omega) = (\pi^\dprime \odot \mu^\dprime) (\omega), \]
with~$\pi^\dprime \odot \mu^\dprime := \cdot \circ (\pi^\dprime \otimes \mu^\dprime) \circ \Delta$ where~$\Delta$ is a deshuffle coproduct over~$\AA_D$. For example,
\[ (\forest{b,b[b]})^\prime (\forest{(b)}) = \forest{(b),b,b[b]}, \quad (\forest{b,b[b]})^{\dprime} (\forest{(b)}) = \forest{(b),b} \cdot \forest{b[b]} + \forest{b} \cdot \forest{(b),b[b]}. \]
The following result presents a convenient method to compute~$\Phi^*$.
\begin{proposition}
\label{prop:Phi_duality}
    Given a decorated forest~$\pi \in \FF_D$ and~$\omega \in \AA_D$, we have the following identity
    \[ \Phi^*(\pi^\prime(\omega)) = \pi^\dprime(\omega). \]
\end{proposition}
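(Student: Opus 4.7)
The plan is to proceed from the adjoint characterisation of $\Phi^*$: since $\Phi$ sends each basis clumped forest to a single aromatic forest and $\langle \cdot, \cdot \rangle_\sigma$ is non-degenerate on basis elements, one obtains the explicit formula
\[ \Phi^*(y) = \sum_{p \,:\, \Phi(p) = y} \frac{\sigma(y)}{\sigma(p)}\, p. \]
The preimages of $y = \omega\pi$ under $\Phi$ are precisely the clumped forests produced by distributing the concatenation factors of $\omega \in \AA_D$ across the rooted components of $\pi \in \FF_D$ in every possible way, which is exactly the kind of combinatorics the iterated deshuffle coproduct encodes.

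I would then induct on the number $n$ of trees in $\pi$. In the base case $n = 1$, with $\pi = \tau \in \TT_D$, the aromatic forest $\omega\tau$ already lies in $\AT_D \subset \CF_D$ and is its own unique preimage under $\Phi$ with matching symmetry coefficient, hence $\Phi^*(\omega\tau) = \omega\tau = \tau^\dprime(\omega)$. For the inductive step I would write $\pi = \pi_1 \cdot \tau$ with $\pi_1$ having $n-1$ trees; expanding the recursion defining $\pi^\dprime$ and invoking the inductive hypothesis together with the base case gives
\[ \pi^\dprime(\omega) = \sum_{(\omega)} \pi_1^\dprime(\omega_{(1)}) \cdot \tau^\dprime(\omega_{(2)}) = \sum_{(\omega)} \Phi^*(\omega_{(1)}\pi_1) \cdot (\omega_{(2)}\tau), \]
so the proof reduces to verifying the combinatorial identity
\[ \Phi^*(\omega\pi_1\tau) = \sum_{(\omega)} \Phi^*(\omega_{(1)}\pi_1) \cdot (\omega_{(2)}\tau). \]
I would establish this by exhibiting an explicit bijection between preimages of $\omega\pi_1\tau$ under $\Phi$ and pairs consisting of a deshuffle splitting $\omega_{(1)} \otimes \omega_{(2)}$ of $\omega$ together with a preimage of $\omega_{(1)}\pi_1$, the aroma $\omega_{(2)}$ recording what gets attached to the distinguished $\tau$-component.

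The main obstacle is the bookkeeping of symmetry coefficients under this bijection. The automorphism group of $\omega\pi$ can permute identical trees (and the corresponding identical components of a clumped preimage), and these permutations must recombine precisely with the multiplicities already encoded in the iterated deshuffle $\Delta^{n-1}(\omega)$. This combinatorial matching is most delicate when $\tau$ also appears as a factor of $\pi_1$, in which case the distinguished $\tau$-component is defined only up to the action of the automorphism group; checking that the $\sigma$-factors reorganise correctly in this case is the crux of the argument, after which the inductive step closes and the identity $\Phi^*(\pi^\prime(\omega)) = \pi^\dprime(\omega)$ follows for all $\pi \in \FF_D$ and $\omega \in \AA_D$.
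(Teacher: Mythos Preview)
Your proposal is correct and follows the same inductive skeleton as the paper (induction on the number of trees in $\pi$, base case a single tree, reduction to a symmetry-coefficient identity). The organizational difference is where you dualize: you work directly with the explicit formula $\Phi^*(y)=\sum_{\Phi(p)=y}\tfrac{\sigma(y)}{\sigma(p)}\,p$ and must then verify the bijection-with-multiplicities you describe; the paper instead dualizes once more, passing to the adjoints $\pi'^*:\AF_D\to\AA_D$ and $\pi''^*:\CF_D\to\AA_D$ and proving $\pi'^*\circ\Phi=\pi''^*$ on the clumped forests that actually occur. Using the concatenation/deshuffle duality lemma, the inductive step for $(\pi\cdot\mu)''^*$ then collapses to the single identity $\sigma(\pi)\,\sigma(\mu)\,|A|=\sigma(\pi\cdot\mu)$, where $|A|$ counts the deshuffle splittings of a clumped forest whose two pieces have rooted parts $\pi$ and $\mu$ respectively --- an orbit-stabilizer statement. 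This packages precisely the $\sigma$-bookkeeping you flag as the crux (including the repeated-$\tau$ case) into one line; so if the multiplicity tracking in your bijection becomes awkward, passing to $\pi''^*$ is the clean way to finish.
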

\begin{proof}
  We take the adjoints of~$\pi^\prime$ and~$\pi^{\prime \prime}$ with respect to the~$\langle \blank, \blank \rangle_\sigma$ inner product and denote them by~$\pi^{\prime*} : \AF_D \to \AA_D$ and~$\pi^{\dprime*} : \CF_D \to \AA_D$. Then,~$\pi^{\prime*} (\omega \pi) = \sigma(\pi) \omega$ where~$\omega \in \AA_D$ and $\pi \in \FF_D$. To prove the statement, we have to show that~$\pi^{\dprime*}(\pi_\omega) = \sigma(\pi) \omega$ where~$\pi_\omega \in \CF_D$ is a decorated clumped forest that occurs as a term in~$\pi^\dprime(\omega)$, that is, we need to show that $\pi^{\prime *}(\Phi(\pi_\omega)) = \pi^{\dprime*} (\pi_\omega)$.

    We note that for~$\tau \in \TT_D$,~$\tau^{\dprime*}(\omega) = \tau^{\prime*}(\omega) = \sigma(\tau) \omega$ and the statement is true. We use an inductive assumption and Lemma~\ref{lemma:concat_deshuffle_duality} to obtain
    \[ (\pi \cdot \mu)^{\dprime*} (\eta_\omega) = (\cdot \circ (\pi^{\dprime*} \otimes \mu^{\dprime*}) \circ \Delta) (\eta_\omega) = \sigma(\pi) \sigma(\mu) |A| \omega, \]
    where~$A$ is the set of~$\eta_{\omega,(1)} \otimes \eta_{\omega,(2)}$ such that the rooted components of~$\eta_{\omega, (1)}$ and~$\eta_{\omega, (2)}$ are isomorphic to~$\pi$ and~$\mu$, respectively. We note that~$\sigma(\pi) \sigma(\mu) |A| = \sigma(\pi \cdot \mu)$ and the proof is finished.
\end{proof}
 For example, let~$\forest{(b),b=b,b,b[b]} \in \AF_D$, then,
\[ \Phi^* (\forest{(b),b=b,b,b[b]}) = \forest{(b),b=b,b} \cdot \forest{b[b]} + \forest{b=b,b} \cdot \forest{(b),b[b]} + \forest{(b),b} \cdot \forest{b=b,b[b]} + \forest{b} \cdot \forest{(b),b=b,b[b]}. \]
We note that Propositions~\ref{prop:exp_odot} and~\ref{prop:Phi_duality} imply the following identity
\[ a_e (\omega \pi) = a_c (\pi^{\dprime} (\omega)), \quad \text{for } \pi \in \FF_D, \quad \omega \in \AA_D, \]
a version of which is presented in~\cite{Bogfjellmo19aso} for classical aromatic forests.

\subsection{Substitution law for decorated aromatic S-series}
\label{sec:substitution_law}

We introduce the decorated insertion product~$\ins_d : \AT_D \otimes \AT_D \to \AT_D$ which inserts the aromatic tree from the left operand into the vertices decorated by~$d \in D$ of the right operand in all possible ways. For example, we recall that~$\forest{(w),b[w[w]]} = \text{div} (\circ) (\circ \graft \circ) \graft \bullet$, then,
\begin{align*}
    \forest{(b),b[w]} \ins_\circ \forest{(w),b[w[w]]} &= \text{div} (\forest{(b),b[w]}) (\circ \graft \circ) \graft \bullet + \text{div} (\circ) (\forest{(b),b[w]} \graft \circ) \graft \bullet + \text{div} (\circ) (\circ \graft \forest{(b),b[w]}) \graft \bullet \\
    &= \forest{(b[b[w]]),b[w[w]]} + \forest{(b),(b[w]),b[w[w]]} + \forest{(b),(b,w),b[w[w]]} + \forest{(b),b[w[b[w]]]} + \forest{(b),b[b[w[w]]]} + \forest{(b),b[b[w,w]]} + \forest{(b[w]),b[b[w]]}.
\end{align*}
The product~$\ins_d$ generalizes the insertion product studied in~\cite{Saidi10oap,Manchon11lpl,Saidi11adh}. Then, the family $(\AT_D, (\ins_d)_{d \in D})$ is a multi-pre-Lie algebra~\cite{BrunedManchon22AlgDefSPDE,Foissy21aso}, that is, for~$\tau, \gamma, \nu \in \AT_D$ and~$d,e \in D$ we have
\[ \tau \ins_d (\gamma \ins_e \nu) - (\tau \ins_d \gamma) \ins_e \nu = \gamma \ins_e (\tau \ins_d \nu) - (\gamma \ins_e \tau) \ins_d \nu. \]

Let~$\AT_D^{\oplus D} := \AT_D \otimes \R[D] = \bigoplus_{d \in D} \AT_D \iota_d$ where~$\iota_d$ for~$d \in D$ form the basis of~$\R[D]$ and let us define the action~$\ins : \AT_D^{\oplus D} \otimes \AT_D \to \AT_D$ by
\[ \tau \iota_d \ins \gamma = \tau \ins_d \gamma. \]
Let us consider~$S(\AT_D^{\oplus D})$ which becomes~$\CF_D^{\otimes D} := \bigotimes_{d \in D} \CF_D \iota_d$ after we assume the identity $\pi \iota_d \cdot \mu \iota_d = (\pi \cdot \mu) \iota_d$. 
We check that the Guin-Oudom process for multi-pre-Lie products~\cite[Thm.\ts 2.4]{Foissy21aso} is well-defined and use it to define
\[ \ins : \CF_D^{\otimes D} \otimes \CF_D \to \CF_D.\] 
For example, let~$D := \{\bullet, \circ \}$, then,~$(\forest{b[b]} \iota_\circ) \cdot (\forest{w,w} \iota_\bullet) \ins \forest{w[b,b]} = 2 \forest{b[w,w,b]} + 4 \forest{b[w,b[w]]} + 2 \forest{b[b[w,w]]}$.
We note that for~$\pi, \mu \in \CF_D$, we have~$\pi \iota_d \ins \mu = 0$ if the number~$|\pi|_{AT}$ of aromatic trees in~$\pi$ is greater than the number~$|\mu|_d$ of vertices decorated by~$d$ in~$\mu$. We define now the substitution action $\subs : \CF_D^{\otimes D} \otimes \CF_D \to \CF_D$ in the following way
\[ (\otimes_{d \in D} \pi_d \iota_d) \subs \mu := \begin{cases} 
  (\otimes_{d \in D} \pi_d \iota_d) \ins \mu, &\text{if } |\pi_d|_{AT} = |\mu|_d \text { for all } d \in D, \\ 
  0, &\text{otherwise.} 
\end{cases} \]
The substitution action substitutes all vertices in the right operand by the aromatic trees from the left operand. 
We recall that~$CF_D$ is a free D-algebra generated by the set~$D$, therefore, given a map~$\varphi : D \to \AT_D$, there exists a unique morphism~$A_\varphi : \CF_D \to \CF_D$. The morphism~$A_\varphi$ can be written using the substitution action.

\begin{lemma}
\label{lemma:Aphi_exp}
  Given a morphism~$A_\varphi : \CF_D \to \CF_D$ that acts on generators~$D$ as~$\varphi$, we have
  \[ A_\varphi (\pi) = \otimes_{d\in D} \exp^\cdot_C(\varphi(d) \iota_d) \subs \pi. \]
\end{lemma}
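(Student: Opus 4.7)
The strategy is to invoke the universal property recalled immediately before the statement: since $\CF_D$ is the free commutative D-algebra on $D$, the morphism $A_\varphi$ is characterised uniquely by its restriction to the generators. Writing $\tilde A_\varphi(\pi):=\otimes_{d\in D}\exp^{\cdot}_C(\varphi(d)\iota_d)\subs\pi$ for the map defined by the right-hand side, it therefore suffices to verify (i) that $\tilde A_\varphi(d)=\varphi(d)$ for each generator $d\in D$, and (ii) that $\tilde A_\varphi$ is a morphism of commutative D-algebras. Note in passing that $\tilde A_\varphi$ is well-defined despite the infinite exponentials, since for any fixed $\pi$ the selection rule in the definition of $\subs$ forces exactly the degree-$|\pi|_d$ term in each factor to survive, leaving a finite sum.

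Step (i) is immediate. If $\pi$ is the single vertex decorated by $d_0$, then $|\pi|_{d'}=\delta_{d_0,d'}$, so the selection rule picks the unit $\mathbf{1}$ from every factor $\exp^{\cdot}_C(\varphi(d')\iota_{d'})$ with $d'\neq d_0$ and the linear term $\varphi(d_0)\iota_{d_0}$ from the factor at $d_0$; inserting the single tree $\varphi(d_0)$ into the unique vertex yields $\varphi(d_0)$, as required.

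Step (ii) splits into compatibility with the concatenation product~$\cdot$ and with the grafting product~$\graft$. Compatibility with $\cdot$ reduces, via the multinomial expansion of the exponentials together with the duality between concatenation and deshuffle established in Lemma~\ref{lemma:concat_deshuffle_duality}, to the elementary combinatorial fact that distributing $|\pi|_d+|\mu|_d$ copies of $\varphi(d)$ among the $d$-vertices of $\pi\cdot\mu$ factors as the product of the distributions over $\pi$ and over $\mu$, with the factorial weight $1/(|\pi|_d+|\mu|_d)!$ matching $(1/|\pi|_d!)(1/|\mu|_d!)$ after summing over the $\binom{|\pi|_d+|\mu|_d}{|\pi|_d}$ splittings. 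Compatibility with the trace and the stolonic bilinear form follows by the same bookkeeping, since both are built from $\graft$.

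The main obstacle is compatibility with $\graft$: this is the multi-pre-Lie generalisation of the classical Guin--Oudom theorem, whose content is precisely~\cite[Thm.\ts 2.4]{Foissy21aso}. Concretely, when $\tilde A_\varphi(\pi\graft\mu)$ is expanded through the Guin--Oudom extension of the multi-pre-Lie action $(\ins_d)_{d\in D}$, every grafted edge either lies entirely inside an inserted copy of some $\varphi(d)$ or spans two such inserted copies, one coming from $\pi$ and one from $\mu$; the multi-pre-Lie identity between $\graft$ and $(\ins_d)_{d\in D}$ is exactly what is needed to rearrange these two classes of contributions and reassemble them as the grafting $\tilde A_\varphi(\pi)\graft\tilde A_\varphi(\mu)$. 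Once (ii) is established, the uniqueness of $A_\varphi$ forces $\tilde A_\varphi=A_\varphi$ and the lemma follows.
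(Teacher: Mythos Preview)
Your approach is correct but substantially different from the paper's. The paper gives a direct two-line computation: for $\varphi(d)=\tau_d\in\AT_D$ a single tree, the selection rule in $\subs$ extracts from each exponential exactly the degree-$|\pi|_d$ term, giving
\[
\otimes_{d\in D}\exp^{\cdot}_C(\tau_d\iota_d)\subs\pi=\frac{1}{\prod_{d\in D}|\pi|_d!}\big(\otimes_{d\in D}\tau_d^{|\pi|_d}\iota_d\big)\ins\pi,
\]
and the right-hand side equals $A_\varphi(\pi)$ \emph{by the very definition of the Guin--Oudom extension of $\ins$}. No morphism verification is needed at all.

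Your route via the universal property is conceptually sound, but it pushes the work into step~(ii) for $\graft$, which you leave at the level of a sketch. The citation of \cite[Thm.~2.4]{Foissy21aso} is not quite on target: that result is the Guin--Oudom extension of the multi-pre-Lie action (this is how $\ins$ is defined on $\CF_D^{\otimes D}$), not the statement that the resulting substitution respects $\graft$. What you actually need is the compatibility between the two pre-Lie structures $\graft$ and $\ins_d$ (insertion at a vertex distributes the incoming edges over the inserted tree via grafting), and then an induction through the Guin--Oudom recursion. This can be done, but it is precisely the kind of combinatorial unwinding that the paper's direct computation bypasses. Your argument buys a more structural picture and handles general $\varphi(d)\in\AT_D$ uniformly; the paper's argument buys brevity by recognising that the identity is essentially definitional once the exponential is expanded.
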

\begin{proof}
  Assume~$\varphi(d) = \tau_d \in \AT_D$, then,
  \[ \otimes_{d\in D} \exp^\cdot_C(\tau_d \iota_d) \subs \pi = \frac{1}{\prod_{d \in D}|\pi|_d!} (\otimes_{d \in D} \tau_d^{|\pi|_d} \iota_d) \ins \pi = A_\varphi(\pi), \]
  where we use the definition of~$\ins$.
\end{proof}

Let~$a^{\otimes D} := \otimes_{d \in D} a_d \iota_d$ with~$a_d \in \CF^*_D$ be a functional over~$\CF_D^{\otimes D}$ defined as
\[ a^{\otimes D} (\otimes_{d \in D} \pi_d \iota_d) = \prod_{d \in D} a_d (\pi_d). \]
For example, let~$D = \{\bullet, \circ\}$, then,
$(a\iota_\bullet b\iota_\circ) \big(\forest{b[b,w[b]],w[b]}\iota_\bullet \cdot \forest{b,b[w]}\iota_\circ \big) = a(\forest{b[b,w[b]],w[b]}) b(\forest{b,b[w]})$.
We define a coaction $\Delta_{CEM} : \CF_D \to \CF_D^{\otimes D} \otimes \CF_D$ and show in Proposition~\ref{prop:subs_cemdual} that it is dual to the substitution action up to the symmetry coefficient, that is, we have
\[ \delta_\sigma(b^{\otimes D} \star a) = \delta_\sigma(b^{\otimes D}) \subs \delta_\sigma(a) \, \quad \text{where } b^{\otimes D} \star a = (b^{\otimes D} \otimes a) \circ \Delta_{CEM}, \]
for~$a \in \CF_D^*$,~$b^{\otimes D} \in \CF_D^{\otimes D *}$, and where we denote~$\sigma^{\otimes D}$ by~$\sigma$ for simplicity. The coaction~$\Delta_{CEM}$ is a generalization of the Calaque--Ebrahimi-Fard--Manchon (CEM) coproduct from~\cite{Calaque11tih}.

\begin{definition}
  Define the CEM coaction~$\Delta_{CEM} : \CF_D \to \CF_D^{\otimes D} \otimes \CF_D$ as
    \[ \Delta_{CEM} (\pi) = \sum_{\otimes_{d \in D} p_d \iota_d} (\otimes_{d \in D} p_d \iota_d) \otimes \pi/_{\otimes_{d \in D} p_d \iota_d}, \quad \text{for } \pi \in \CF_D, \]
    where the sum is over all monomials of disjoint decorated clumped subforests~$p_d \iota_d$ that partition~$\pi$, and~$\pi/_{\otimes_{d \in D} p_d \iota_d}$ denotes the decorated clumped forest obtained by contracting the decorated aromatic trees of~$p_d \iota_d$ into vertices decorated by~$d$. 
\end{definition}
For example, let~$D = \{\bullet, \circ \}$, then,
\begin{align*}
    \Delta_{CEM} (\forest{b}) &= \forest{b} \iota_\bullet \otimes \forest{b} + \forest{b} \iota_\circ \otimes \forest{w}, \\
    \Delta_{CEM} (\forest{b[w]}) &= \forest{b[w]} \iota_\bullet \otimes \forest{b} + \forest{b[w]} \iota_\circ \otimes \forest{w} + \forest{b,w} \iota_\bullet \otimes \forest{b[b]} + \forest{b,w} \iota_\circ \otimes \forest{w[w]} \\
    &\quad \forest{b} \iota_\bullet \cdot \forest{w} \iota_\circ \otimes \forest{b[w]} + \forest{b} \iota_\circ \cdot \forest{w} \iota_\bullet \otimes \forest{w[b]}, \\
    \Delta_{CEM} (\forest{b[b,w]}) &= \forest{b[b,w]} \iota_\bullet \otimes \forest{b} + \forest{b[b],w} \iota_\bullet \otimes \forest{b[b]} + \forest{b[w],b} \iota_\bullet \otimes \forest{b[b]} + \forest{b,b,w} \iota_\bullet \otimes \forest{b[b,b]} + \\
    &\quad \forest{b[b,w]} \iota_\circ \otimes \forest{w} + \forest{b[b],w} \iota_\circ \otimes \forest{w[w]} + \forest{b[w],b} \iota_\circ \otimes \forest{w[w]} + \forest{b,b,w} \iota_\circ \otimes \forest{w[w,w]} + \\
    &\quad \forest{b[b]} \iota_\bullet \cdot \forest{w} \iota_\circ \otimes \forest{b[w]} + \forest{b[b]} \iota_\circ \cdot \forest{w} \iota_\bullet \otimes \forest{w[b]} + \forest{b[w]} \iota_\bullet \cdot \forest{b} \iota_\circ \otimes \forest{b[w]} + \forest{b[w]} \iota_\circ \cdot \forest{b} \iota_\bullet \otimes \forest{w[b]} + \\
    &\quad \forest{b} \iota_\circ \cdot \forest{b,w} \iota_\bullet \otimes \forest{b[w,b]} + \forest{b} \iota_\circ \cdot \forest{b,w} \iota_\bullet \otimes \forest{w[b,b]} + \forest{b,b} \iota_\bullet \cdot \forest{w} \iota_\circ \otimes \forest{b[b,w]} + \\
    &\quad \forest{b} \iota_\bullet \cdot \forest{b,w} \iota_\circ \otimes \forest{w[b,w]} + \forest{b} \iota_\bullet \cdot \forest{b,w} \iota_\circ \otimes \forest{b[w,w]} + \forest{b,b} \iota_\circ \cdot \forest{w} \iota_\bullet \otimes \forest{w[w,b]}.
\end{align*}

\begin{proposition}
\label{prop:subs_cemdual}
  Let~$a \in \CF^*_D$,~$b^{\otimes D} \in \CF_D^{\otimes D *}$, and~$b^{\otimes D} \star a = (b^{\otimes D} \otimes a) \circ \Delta_{CEM}$, then,
    \[ \delta_\sigma(b^{\otimes D} \star a) = \delta_\sigma(b^{\otimes D}) \subs \delta_\sigma(a). \]
\end{proposition}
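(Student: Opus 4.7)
The plan is to reduce the identity to a combinatorial bijection by testing against basis elements. Both sides are bilinear in~$a$ and~$b^{\otimes D}$, so it suffices to treat the case $a=\delta_\mu$ and $b^{\otimes D}=\delta_{\pi^{\otimes D}}$ for fixed basis elements $\mu\in CF_D$ and $\pi^{\otimes D}=\bigotimes_{d\in D}\pi_d\iota_d$. Under $\delta_\sigma$ these map to $\mu/\sigma(\mu)$ and $\pi^{\otimes D}/\sigma(\pi^{\otimes D})$, so the right-hand side of the claimed equality becomes $\sigma(\pi^{\otimes D})^{-1}\sigma(\mu)^{-1}\,\pi^{\otimes D}\subs\mu$, while the left-hand side is $\sum_\eta \sigma(\eta)^{-1}\,M(\eta;\pi^{\otimes D},\mu)\,\eta$, where $M(\eta;\pi^{\otimes D},\mu)$ denotes the number of terms in $\Delta_{CEM}(\eta)$ equal to $\pi^{\otimes D}\otimes\mu$. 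Writing $N(\pi^{\otimes D},\mu;\eta)$ for the coefficient of~$\eta$ in $\pi^{\otimes D}\subs\mu$, the proposition reduces to the numerical identity
\[
\sigma(\eta)\,N(\pi^{\otimes D},\mu;\eta) \;=\; \sigma(\pi^{\otimes D})\,\sigma(\mu)\,M(\eta;\pi^{\otimes D},\mu),\qquad \eta\in CF_D.
\]

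This identity will then follow from orbit--stabiliser applied to a natural bijection between labelled substitutions and labelled partitions. A labelled substitution is a vertex labelling of $\pi^{\otimes D}$ and of $\mu$ together with, for each $d\in D$, a pairing of the aromatic trees of $\pi_d$ with the $d$-decorated vertices of $\mu$ and, for each such pair, a choice of reattachment of the predecessors of the vertex in $\mu$ onto the inserted tree, as dictated by the multi-pre-Lie insertion~$\ins_d$ extended by the Guin--Oudom procedure. A labelled partition of $\eta$ is, dually, a vertex labelling of $\eta$ together with a decomposition of its vertex set into clumped subforests tagged by decorations. The bijection sends a labelled substitution to its output $\eta$ partitioned by the vertex-preimages of the $d$-vertices of~$\mu$, and inverts by contracting each block of the partition to a single decorated vertex: the result of the contraction is precisely~$\mu$ and the collection of blocks is precisely $\pi^{\otimes D}$, matching the very definition of $\Delta_{CEM}$.

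The main obstacle I anticipate is the bookkeeping of symmetries. The group $\Aut(\pi^{\otimes D})\times\Aut(\mu)$ acts on labelled substitutions, and each unlabelled substitution contributing to~$\eta$ has orbit of size $\sigma(\pi^{\otimes D})\sigma(\mu)/|\stab|$; on the other side, $\Aut(\eta)$ acts on labelled partitions, and each unlabelled partition contributing~$\pi^{\otimes D}\otimes\mu$ has orbit of size $\sigma(\eta)/|\stab'|$. The crux is to verify that the bijection is equivariant in the sense that $\stab$ and $\stab'$ are canonically identified: an automorphism of~$\eta$ preserving the partition must both permute the aromatic trees within each~$\pi_d$ (through the block identification) and permute the $d$-vertices of~$\mu$ (through the contraction), and every such pair of simultaneous actions arises in this way. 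Once this equivariance is established, the orbit counts cancel to yield the required numerical identity, finishing the proof.
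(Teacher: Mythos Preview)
Your approach is correct and reduces to the same identity the paper targets, namely the duality $\langle\pi^{\otimes D}\subs\mu,\eta\rangle_\sigma=\langle\pi^{\otimes D}\otimes\mu,\Delta_{CEM}(\eta)\rangle_\sigma$, which in your notation reads $\sigma(\eta)\,N=\sigma(\pi^{\otimes D})\,\sigma(\mu)\,M$. The paper packages the bookkeeping differently: it first lifts to an auxiliary space $\CF_{DO}$ of \emph{ordered} (totally vertex-labelled) clumped forests, where every symmetry coefficient equals~$1$ and the duality between the lifted operations $\subs^O$ and $\Delta_{CEM}^O$ is a bare bijection with no combinatorial factors, and then descends along forgetting maps $\varphi,\hat\varphi\colon\CF_{DO}\to\CF_D$. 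Your route stays in the unordered category and absorbs the symmetry factors directly via orbit--stabiliser for the actions of $\Aut(\pi^{\otimes D})\times\Aut(\mu)$ and $\Aut(\eta)$. These are two standard presentations of the same count: the paper's ordered forests are precisely your labelled structures, and its descent along $\varphi$ is your passage to orbits. The paper's version makes the bijection step coefficient-free at the cost of introducing $\CF_{DO}$, $\subs^O$, $\Delta_{CEM}^O$; yours is more self-contained but requires the stabiliser identification you sketch. When writing yours out in full, the cleanest way to bypass the stabiliser comparison is to exhibit the single set
\[
\bigl\{(P,\psi,\chi)\ :\ P\text{ a partition of }\eta,\ \psi\colon p(P)\xrightarrow{\sim}\pi^{\otimes D},\ \chi\colon\eta/_P\xrightarrow{\sim}\mu\bigr\}
\]
and observe that its cardinality is $\sigma(\pi^{\otimes D})\,\sigma(\mu)\,M$ immediately, and equals $\sigma(\eta)\,N$ via your bijection to substitutions equipped with an isomorphism of the output with~$\eta$; this makes the stabiliser match automatic rather than something to be checked.
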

\begin{proof}
    We prove the statement in two steps. First, we prove the statement over the space of ordered clumped forests. Ordered clumped forest are forests in which all vertices are totally ordered. Next, we describe the relationships between the product and coproduct over ordered and non-ordered clumped forests and use them to finish the proof.

    \textbf{1.} The space~$\CF_{DO}$ is defined by assigning every decorated clumped forest~$\pi$ a total order over the vertices. We note that a forest~$\pi \in \CF_D$ corresponds to~$|\pi|!$ ordered forests in~$\CF_{DO}$. The symmetry of any element of~$\CF_{DO}$ is equal to~$1$. Let~$\CF_{DO}^{\otimes D} := \bigotimes_{d \in D} \CF_{DO} \iota_d$.
    
    We define~$\subs^O : \CF_{DO}^{\otimes D} \otimes \CF_D \to \CF_{DO}$ and~$\Delta_{CEM}^O : \CF_{DO} \to \CF_{DO}^{\otimes D} \otimes \CF_D$ to be the natural extensions of~$\subs$ and~$\Delta_{CEM}$. The map~$\subs^O$ substitutes the vertices decorated by~$d$ by the trees of~$\pi_d \iota_d$ and chooses a total order between the vertices of~$\pi_d$ and~$\pi_l$ with~$d \neq l$ in all possible ways. We note that there are~$\big( \sum_{d \in D} |\pi_d| \big) ! / \prod_{d \in D} |\pi_d|!$ ways to choose a total order between the vertices of~$\pi_d$ and~$\pi_l$ with~$d \neq l$. Let~$\pi \in \CF_{DO}^{\otimes D}$ and~$\mu \in \CF_D$, then,
    \[ \pi \subs^O \mu = \sum_{\eta \in \CF_{DO}} N(\pi, \mu, \eta) \eta, \]
    where~$N(\pi, \mu, \eta)$ is the number of ways to substitute the vertices of~$\mu$ by trees in~$\pi$ to obtain~$\eta$. We note that there are~$|\Aut(\mu)|$ ways to substitute the vertices of~$\mu$ to obtain the same ordered forest~$\eta$, therefore,~$N(\pi, \mu, \eta) = \sigma(\mu)$.
    The symmetry is~$1$ for an ordered clumped forest, and, since all terms in~$\Delta_{CEM}^O (\pi)$ have coefficient~$1$, we obtain
    \[ \delta_\sigma^O (b^{\otimes D}) \subs^O \delta_\sigma (a) = \delta_\sigma^O (b^{\otimes D} \star^O a), \quad \text{where } b^{\otimes D} \star^O a = (b^{\otimes D} \otimes a) \circ \Delta_{CEM}^O, \]
    for~$b^{\otimes D} \in \CF_{DO}^{\otimes D *}$ and~$a \in \CF_D^*$. The image of~$\delta_\sigma^O$ is a formal sum over ordered clumped forests.

    \textbf{2.} We define the map~$\varphi$ that forgets the ordering of the vertices and let 
    \[ \hat{\varphi} (\pi) := \frac{\varphi(\pi)}{|\pi|!}, \quad \varphi^{\otimes D}(\otimes_{d \in D} \pi_d \iota_d) = \otimes_{d \in D} \varphi(\pi_d) \iota_d, \] 
    where~$\varphi, \hat\varphi : \CF_{DO} \to \CF_D$ and~$\varphi^{\otimes D}, \hat\varphi^{\otimes D} : \CF_{DO}^{\otimes D} \to \CF_D^{\otimes D}$. We have the following properties
    \[ \hat\varphi \circ \subs^O = \subs \circ (\hat\varphi^{\otimes D} \otimes \id), \quad (\varphi^{\otimes D} \otimes \id) \circ \Delta_{CEM}^O = \Delta_{CEM} \circ \varphi.  \]
    We also note that for a functional~$b^{\otimes D} \in \CF_D^{\otimes D *}$,~$\delta_\sigma (b^{\otimes D}) = \hat\varphi^{\otimes D} (\delta_\sigma^O (b^{\otimes D} \circ \varphi^{\otimes D}))$, therefore,
    \begin{align*}
      \delta_\sigma (b^{\otimes D}) \subs \delta_\sigma (a) &= \hat\varphi^{\otimes D} \big( \delta_\sigma^O (b^{\otimes D} \circ \varphi^{\otimes D}) \big) \subs \delta_\sigma(a) \\ 
        &= \hat\varphi \big( \delta_\sigma^O (b^{\otimes D} \circ \varphi^{\otimes D}) \subs^O \delta_\sigma(a) \big) \\
        &= \hat\varphi \big( \delta_\sigma^O ( (b^{\otimes D} \circ \varphi^{\otimes D}) \star^O a) \big) \\ 
        &= \hat\varphi \big( \delta_\sigma^O ( (b^{\otimes D} \star a) \circ \varphi ) \big) = \delta_\sigma (b^{\otimes D} \star a).
    \end{align*}
    This proves that~$\delta_\sigma$ is an algebra morphism.
\end{proof}

We now extend the insertion action~$\ins : \AT_D^{\oplus D} \otimes \AT_D \to \AT_D$ into the insertion product $\ins : \AT_D^{\oplus D} \otimes \AT_D^{\oplus D} \to \AT_D^{\oplus D}$ by
\[ \tau \iota_d \ins \gamma \iota_l = (\tau \ins_d \gamma) \iota_l. \]
We recall that~$\CF_D^{\otimes D} = S(\AT_D^{\oplus D})$ which allows us to use the Guin-Oudom process and define $\ins : \CF_D^{\otimes D} \otimes \CF_D^{\otimes D} \to \CF_D^{\otimes D}$ and the substitution product~$\subs : \CF_D^{\otimes D} \otimes \CF_D^{\otimes D} \to \CF_D^{\otimes D}$. The algebra~$(\CF_D^{\otimes D}, \subs)$ is endowed with the deshuffle coproduct~$\Delta$ of~$S(\AT_D^{\oplus D})$.
We present a generalization of the result obtained in~\cite{Calaque11tih} for classical forests.

\begin{theorem}
\label{thm:clumped_CEM_bialgebra}
  The algebra~$B_\subs := (\CF_D^{\otimes D}, \subs)$ endowed with the deshuffle coproduct~$\Delta$ is a bialgebra dual to the CEM bialgebra~$B_{CEM} := (\CF_D^{\otimes D}, \cdot, \Delta_{CEM})$ with respect to the~$\langle \blank, \blank \rangle_\sigma$ inner product. Moreover,~$H_{CEM} := B_{CEM} / \langle (1 - \forest{b_d_90}) \iota_d \, : d \in D \rangle$ is a Hopf algebra dual to the Grossman-Larson Hopf algebra~$H_\subs$, which is an appropriate sub-bialgebra of~$B_\subs$ and a universal enveloping algebra of~$(\AT_D^{\oplus D}, \ins)$.
\end{theorem}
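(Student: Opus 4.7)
The strategy is to extend Proposition~\ref{prop:subs_cemdual} from the action $\subs : \CF_D^{\otimes D} \otimes \CF_D \to \CF_D$ to the product $\subs : \CF_D^{\otimes D} \otimes \CF_D^{\otimes D} \to \CF_D^{\otimes D}$, and to mirror the bialgebra/Hopf-algebra argument of Section~\ref{sec:D-algebra} replacing the Grossman--Larson pair $(\cdot,\gl)$ by the pair $(\cdot,\subs)$. First I would verify that $\subs$ is associative on $\CF_D^{\otimes D}$: this follows directly from the multi-pre-Lie Guin--Oudom extension (Theorem~2.4 of~\cite{Foissy21aso}) applied to $(\AT_D^{\oplus D},\ins)$, whose uniqueness clause guarantees that the product built on the symmetric algebra is associative. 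Equivalently, coassociativity of $\Delta_{CEM}$, which is immediate from the iterative description of partitions by disjoint clumped subforests, transfers associativity to $\subs$ through the duality.

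Next I would prove the bialgebra compatibility between $\subs$ and the deshuffle coproduct $\Delta$ on $\CF_D^{\otimes D}=S(\AT_D^{\oplus D})$. This is the substitution analog of property~(4) of Proposition~\ref{GL-Rinehart-bialgebra}: for $\pi,\mu\in\CF_D^{\otimes D}$,
\[ \Delta(\pi \subs \mu) = \Delta(\pi) \subs_\otimes \Delta(\mu), \quad (p_1\otimes p_2)\subs_\otimes(m_1\otimes m_2):=(p_1\subs m_1)\otimes(p_2\subs m_2), \]
which is a direct consequence of the Guin--Oudom formula written for the multi-pre-Lie $\ins$ product together with the cocommutativity of $\Delta$. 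Dually, Lemma~\ref{lemma:concat_deshuffle_duality} already identifies the concatenation $\cdot$ on $B_{CEM}$ with the dual of $\Delta$ on $B_\subs$, and I would extend Proposition~\ref{prop:subs_cemdual} to the full tensor product $\CF_D^{\otimes D\ast}$ to identify $\subs$ on $B_\subs$ with the dual of $\Delta_{CEM}$ on $B_{CEM}$. Combining the two dualities gives that $B_{CEM}$ and $B_\subs$ are dual bialgebras.

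For the Hopf-algebra claim, each generator $\forest{b_d_90}\iota_d$ in $B_{CEM}$ behaves as a grouplike idempotent under substitution, since it acts as identity whenever it is substituted into a single vertex decorated by $d$; consequently $\Delta_{CEM}(\forest{b_d_90}\iota_d) = \forest{b_d_90}\iota_d \otimes \forest{b_d_90}\iota_d$ up to terms that vanish in the quotient. Modding out by the ideal $\langle(1-\forest{b_d_90})\iota_d : d\in D\rangle$ therefore yields a graded connected bialgebra $H_{CEM}$ which automatically admits a unique antipode by the standard recursion, exactly as in Theorem~\ref{thm:CF_Hopf_alg} but with substitution replacing grafting. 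On the dual side, $H_\subs$ is the sub-bialgebra of $B_\subs$ consisting of the functionals vanishing on $(1-\forest{b_d_90})\iota_d$ for every $d\in D$; by inspection its primitive elements are precisely $\AT_D^{\oplus D}$, and the Cartier--Milnor--Moore theorem applied to this cocommutative graded connected Hopf algebra identifies $H_\subs$ with the universal enveloping algebra of the multi-pre-Lie algebra $(\AT_D^{\oplus D},\ins)$ in the sense of the Guin--Oudom construction.

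The main obstacle will be the careful combinatorial extension of Proposition~\ref{prop:subs_cemdual} to the situation where the left operand also belongs to $\CF_D^{\otimes D}$: one needs to verify that the compatibility between $\subs$ and $\Delta_{CEM}$, now read as an identity of bialgebra morphisms rather than of an action versus a coaction, respects the interaction between the monomials $\otimes_{d\in D}\pi_d\iota_d$ and the contractions $\pi/_{\otimes_{d\in D} p_d\iota_d}$. The ordered-forest device of the proof of Proposition~\ref{prop:subs_cemdual} should transfer smoothly, with the only extra bookkeeping being the symmetry factors distributed across the different decoration slots.
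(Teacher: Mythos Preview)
Your plan follows essentially the same route as the paper: verify that $(\CF_D^{\otimes D},\subs,\Delta)$ is a bialgebra via the Guin--Oudom compatibility, deduce the duality with $B_{CEM}$ from Lemma~\ref{lemma:concat_deshuffle_duality} and Proposition~\ref{prop:subs_cemdual}, and pass to a Hopf algebra by the quotient, exactly as in~\cite{Calaque11tih}. Your compatibility identity $\Delta(\pi\subs\mu)=\Delta(\pi)\subs_\otimes\Delta(\mu)$ is precisely what the paper uses (derived there from $\pi\subs(\blank\cdot\blank)=\sum_{(\pi)}(\pi_{(1)}\subs\blank)\cdot(\pi_{(2)}\subs\blank)$ and associativity of $\subs$).

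Two points deserve more care. First, the unit of $B_\subs$ is not $\one$ but $u=\otimes_{d\in D}\exp^\cdot_C(\forest{b_d_90}\iota_d)$; this is what the paper spells out, and it is needed to see that $\Delta(u)=u\otimes u$ and that the quotient $H_{CEM}$ is genuinely connected for an appropriate grading. You implicitly assume this when you call $H_{CEM}$ ``graded connected'', but the grading by number of vertices is destroyed by the relation $\one=\forest{b_d_90}\iota_d$, so you must specify the correct grading (e.g.\ by edges). Second, your description of $H_\subs$ as ``the sub-bialgebra of $B_\subs$ consisting of the functionals vanishing on $(1-\forest{b_d_90})\iota_d$'' mixes the primal and dual pictures: $B_\subs$ lives on $\CF_D^{\otimes D}$ itself, not on its dual. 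The paper identifies $H_\subs$ concretely as the sub-bialgebra of elements of the form $u\cdot\pi$ for $\pi\in\CF_D^{\otimes D}$, and then observes directly from the Guin--Oudom construction that this is the universal enveloping algebra of $(\AT_D^{\oplus D},\ins)$. Your appeal to Cartier--Milnor--Moore is a legitimate alternative once $H_\subs$ is correctly pinned down, but the paper's identification via Guin--Oudom is shorter here since the enveloping-algebra structure is already built in.
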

\begin{proof}
  Let us start by proving that~$B_\subs$ is a bialgebra. The unit is given by~$u := \otimes_{d \in D} \exp^\cdot_C(\forest{b_d_90} \iota_d)$ and counit is given by~$\epsilon := \one^* \in \CF_D^{\otimes D *}$. It is straightforward to check that the counit is compatible with the product and unit. We use the property~$\Delta(\exp^\cdot_C (\tau)) = \exp_C^\cdot(\tau) \otimes \exp_C^\cdot(\tau)$ to check that the unit is compatible with the coproduct, that is,~$\Delta(u) = u \otimes u$. We use
    \[ \pi \subs (\blank \cdot \blank) = \sum_{(\pi)} (\pi_{(1)} \subs \blank) \cdot (\pi_{(2)} \subs \blank) \]
    and the associativity of~$\subs$ to obtain the identity
    \[ \sum_{(\pi),(\mu)} (\pi_{(1)} \subs \mu_{(1)} \subs \blank) (\pi_{(2)} \subs \mu_{(2)} \subs \blank) = \sum_{(\pi \subs \mu)} ((\pi \subs \mu)_{(1)} \subs \blank) ((\pi \subs \mu)_{(2)} \subs \blank). \]
    Therefore, the compatibility of the product and coproduct is necessary. This proves that~$B_\subs$ is a bialgebra. Lemma~\ref{lemma:concat_deshuffle_duality} and Proposition~\ref{prop:subs_cemdual} show that~$B_{CEM}$ is the dual bialgebra of~$B_\subs$.

    Similarly to~\cite{Calaque11tih}, we obtain a Hopf algebra once we take the quotient of~$B_{CEM}$ by the ideal~$\langle (\one - \forest{b_d_90}) \iota_d \, : d \in D \rangle$. The Hopf algebra~$H_{CEM}$ thus obtained is dual to the Hopf algebra~$H_\subs$ which is isomorphic to the Grossman-Larson Hopf algebra obtained using Guin-Oudom process applied to the insertion pre-Lie product~$\ins$. The elements of~$H_\subs$ have the form
    \[ \otimes_{d \in D} \exp^\cdot_C(\forest{b_d_90} \iota_d) \cdot \pi, \quad \text{for } \pi \in \CF_D^{\otimes D}. \]
    Following Guin-Oudom, the Grossman-Larson algebra is a universal enveloping algebra of the respective pre-Lie algebra.
\end{proof}

\begin{theorem}[Substitution law]\label{thm:substitution_law}
    Let~$a \in \CF_D^*$ and let~$\{ b_{0,d} \in \AT_D^* \; : \; d \in D\}$ be a set of infinitesimal characters. Let us consider a map~$\varphi : D \to \AT_D$ with~$\varphi(d) = \delta_\sigma (b_{0,d})$, let~$F_D (d) = f_d$ for~$d \in D$, and let~$S_\varphi = F_D \circ A_\varphi \circ \delta_\sigma$, then,
    \[ S_\varphi (a) = S(b_c^{\otimes D} \star a), \]
    where~$b_c^{\otimes D} = \otimes_{d \in D} b_{c,d} \iota_d$ with~$b_{c,d}$ being a character of~$\CF_D$ that extends~$b_{0,d}$.
\end{theorem}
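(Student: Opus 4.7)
The plan is to chain three earlier results: Lemma~\ref{lemma:Aphi_exp}, Proposition~\ref{prop:subs_cemdual}, and Proposition~\ref{prop:exp_odot}. Since the elementary differential map~$F_D$ appears on both sides, I would first unwind
$$S_\varphi(a) = F_D(A_\varphi(\delta_\sigma(a))), \qquad S(b_c^{\otimes D} \star a) = F_D(\delta_\sigma(b_c^{\otimes D} \star a)),$$
and reduce the claim to the intrinsic identity
$$A_\varphi(\delta_\sigma(a)) = \delta_\sigma(b_c^{\otimes D} \star a)$$
inside~$\CF_D$.

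Next, Lemma~\ref{lemma:Aphi_exp} rewrites the left-hand side as a substitution
$$A_\varphi(\delta_\sigma(a)) = \Big(\bigotimes_{d \in D} \exp^\cdot_C(\varphi(d) \iota_d)\Big) \subs \delta_\sigma(a),$$
while Proposition~\ref{prop:subs_cemdual} rewrites the right-hand side dually as
$$\delta_\sigma(b_c^{\otimes D} \star a) = \delta_\sigma(b_c^{\otimes D}) \subs \delta_\sigma(a).$$
The task then reduces to matching the left operands of~$\subs$ on both sides, that is, to showing
$$\bigotimes_{d \in D} \exp^\cdot_C(\varphi(d) \iota_d) = \delta_\sigma(b_c^{\otimes D})$$
in~$\CF_D^{\otimes D}$.

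For this final identification I would invoke Proposition~\ref{prop:exp_odot}. Since~$\varphi(d) = \delta_\sigma(b_{0,d})$ and~$b_{c,d}$ is by hypothesis the character of~$(\CF_D,\cdot)$ extending the infinitesimal character~$b_{0,d}$, Proposition~\ref{prop:exp_odot} identifies~$b_{c,d} = \exp^\odot_C(b_{0,d})$, and the compatibility $\exp^\cdot_C(\delta_\sigma(\blank)) = \delta_\sigma(\exp^\odot_C(\blank))$ recorded just after its statement yields
$$\exp^\cdot_C(\varphi(d) \iota_d) = \delta_\sigma(\exp^\odot_C(b_{0,d}) \iota_d) = \delta_\sigma(b_{c,d} \iota_d),$$
for every~$d \in D$. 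Taking the tensor product over~$d$ closes the chain.

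The main obstacle is not conceptual but notational: one must verify that~$\delta_\sigma$ factors across the tensor product~$\CF_D^{\otimes D}$, namely that~$\delta_\sigma(b_c^{\otimes D}) = \bigotimes_{d \in D} \delta_\sigma(b_{c,d} \iota_d)$. This boils down to the fact that the symmetry coefficient on~$\CF_D^{\otimes D}$ is the product of the per-component symmetries, which is exactly how~$\sigma^{\otimes D}$ was relabelled as~$\sigma$ in the proof of Proposition~\ref{prop:subs_cemdual}. Once this compatibility is recorded, the theorem is precisely the composition of the three displayed equalities above.
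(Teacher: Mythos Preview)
Your proposal is correct and follows essentially the same route as the paper: both arguments chain Lemma~\ref{lemma:Aphi_exp}, Proposition~\ref{prop:exp_odot}, and Proposition~\ref{prop:subs_cemdual} to reduce the claim to the identity $A_\varphi(\delta_\sigma(a)) = \delta_\sigma(b_c^{\otimes D}) \subs \delta_\sigma(a) = \delta_\sigma(b_c^{\otimes D} \star a)$, and then apply~$F_D$. Your treatment is slightly more explicit about the factorisation of~$\delta_\sigma$ across~$\CF_D^{\otimes D}$, which the paper leaves implicit in its convention $\sigma^{\otimes D} = \sigma$.
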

\begin{proof}
  We use Lemma~\ref{lemma:Aphi_exp} and Proposition~\ref{prop:exp_odot} to write~$A_\varphi$ as
    \[ A_\varphi (\pi) = \delta_\sigma (b_c^{\otimes D}) \subs \pi, \quad \text{where } b_c^{\otimes D} = \otimes_{d \in D} b_{c,d} \iota_d, \]
  with~$b_{c,d}$ being a character of~$\CF_D$ that extends~$b_{0,d}$.
  We use the fact that~$S_\varphi = F_D \circ A_\varphi \circ \delta_\sigma$ and Proposition~\ref{prop:subs_cemdual} to obtain
  \[ S_\varphi (a) = F_D \big( \delta_\sigma (b_c^{\otimes D}) \subs \delta_\sigma (a) \big) = F_D \big( \delta_\sigma (b_c^{\otimes D} \star a) \big) = S(b_c^{\otimes D} \star a). \]
  This finishes the proof.
\end{proof}

Analogously to~\cite{Lundervold13bea}, due to the relation between the homomorphism $A_\varphi : \CF_D \to \CF_D$ of D-algebra and the corresponding map~$b_c^{\otimes D} \star : \CF_D^* \to \CF_D^*$, we obtain Corollary~\ref{corr:cointeraction}.
\begin{corollary}[Cointeraction]\label{corr:cointeraction}
    There is the following cointeraction between the substitution law and the composition law:
    \[ a_c^{\otimes D} \star (b \ast  c) = (a_c^{\otimes D} \star b) \ast (a_c^{\otimes D} \star c), \]
    for~$a_c^{\otimes D} = \otimes_{d \in D} a_{c,d}\iota_d~$ where~$a_{c,d}$ are characters of~$\CF_D$ and~$b, c \in \CF_D^*$.
\end{corollary}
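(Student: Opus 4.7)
The plan is to dualize the identity through $\delta_\sigma$ and reduce the claim to the statement that the D-algebra endomorphism of $\CF_D$ associated with the characters $a_{c,d}$ commutes with the Grossman-Larson product $\gl$ on $\CF_D$.

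First, Proposition~\ref{prop:exp_odot} shows that each character $a_{c,d}$ of $(\CF_D,\cdot)$ arises as $\exp_C^\odot(a_{0,d})$ for the infinitesimal character $a_{0,d} := a_{c,d}|_{\AT_D}$. The associated unique D-algebra morphism $A_\varphi:\CF_D\to\CF_D$ with $\varphi(d) = \delta_\sigma(a_{0,d})$ then satisfies, by Lemma~\ref{lemma:Aphi_exp} together with $\exp^\cdot_C(\delta_\sigma(a_{0,d})\iota_d) = \delta_\sigma(a_{c,d})\iota_d$,
$$A_\varphi(\pi) = \delta_\sigma(a_c^{\otimes D}) \subs \pi \qquad \text{for } \pi\in\CF_D.$$
Combining this with Proposition~\ref{prop:subs_cemdual} yields
$$\delta_\sigma(a_c^{\otimes D} \star a) = A_\varphi(\delta_\sigma(a)) \qquad \text{for } a\in\CF_D^*,$$
so that substitution by $a_c^{\otimes D}$ on the dual side is, up to $\delta_\sigma$, just the action of $A_\varphi$.

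Second, Theorem~\ref{thm:CF_Hopf_alg} expresses the composition law on $\CF_D^*$ as the dual (up to the symmetry factors absorbed into $\delta_\sigma$) of the Grossman-Larson product $\gl$, giving
$$\delta_\sigma(b\ast c) = \delta_\sigma(b) \gl \delta_\sigma(c) \qquad \text{for } b,c\in\CF_D^*.$$
Applying $\delta_\sigma$ to both sides of the desired cointeraction identity and using the two reductions above, the claim reduces to showing
$$A_\varphi(\pi \gl \mu) = A_\varphi(\pi) \gl A_\varphi(\mu) \qquad \text{for all } \pi,\mu\in\CF_D.$$

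Finally, since $A_\varphi$ is by construction a morphism of commutative D-algebras, it preserves $\cdot$ and the grafting product $\graft$ on $\AT_D$. The extension of $\graft$ to $\CF_D\otimes\CF_D$ is characterized uniquely by relations $(i)$--$(iii)$ of Proposition~\ref{prop:GuinOudom-pLR}, all of which involve only the products $\cdot$, $\graft$ and the coproduct $\Delta_{\AA_D}$ that $A_\varphi$ respects; an induction on the length of the left operand therefore shows that $A_\varphi$ preserves $\graft$ on all of $\CF_D$. The identification $\pi_1\graft(\pi_2\graft\blank) = (\pi_1\gl\pi_2)\graft\blank$ then forces $A_\varphi$ to preserve $\gl$ as well, which closes the argument. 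The main obstacle is precisely this last step, verifying compatibility of a D-algebra morphism with the Guin-Oudom extension of the grafting product, but this is essentially a universal-property check built into the construction of $\CF_D$ as the free D-algebra on $D$.
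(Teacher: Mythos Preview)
Your proof is correct and follows essentially the same approach as the paper: you dualize through $\delta_\sigma$, identify $a_c^{\otimes D}\star(\blank)$ with the action of the D-algebra morphism $A_\varphi$, identify $\ast$ with $\gl$, and conclude from the fact that $A_\varphi$ preserves $\gl$. The paper's proof is the same three-line computation, with the compatibility of $A_\varphi$ with $\gl$ taken for granted from the D-algebra morphism property, which you spell out in slightly more detail.
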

\begin{proof}
Let~$\varphi(d) = \delta_\sigma (a_{0,d})$, then we have
    \begin{align*}
      \delta_\sigma (a_c^{\otimes D} \star (b \ast c)) 
      &= A_\varphi ( \delta_\sigma (b \ast  c) ) \\
      &= A_\varphi (\delta_\sigma (b)) \gl A_\varphi (\delta_\sigma (c)) \\
      &= \delta_\sigma ((a_c^{\otimes D} \star b) \ast  (a_c^{\otimes D} \star c)).
    \end{align*}
    Since~$\delta_\sigma$ is an isomorphism, the statement is proved.
\end{proof}

We extend the definition of~$\ins$ to decorated aromatic forests as~$\ins : \CF_D^{\otimes D} \otimes \AF_D \to \AF_D$ and note that~$\Phi$ is a~$\CF_D^{\otimes D}$-module morphism, that is,
\[ \Phi(\pi \ins \mu) = \pi \ins \Phi(\mu). \]
We extend the substitution product~$\subs : \CF_D^{\otimes D} \otimes \AF_D \to \AF_D$ and the coproduct $\Delta_{CEM} : \AF_D \to \CF_D^{\otimes D} \otimes \AF_D$ and use Proposition~\ref{prop:subs_cemdual} to see that~$\Phi^*$ is a~$\CF_D^{\otimes D}$-comodule morphism, that is,
\[ \Delta_{CEM} \circ \Phi^* = (\id \otimes \Phi^*) \circ \Delta_{CEM}. \]
Using the freeness of the tracial commutative D-algebra~$(\AF_D, \graft)$, we note that for every map~$\varphi : D \to \AT_D$, there exists a corresponding~$A_\varphi : \AF_D \to \AF_D$ with the following property
\[ \Phi \circ A_\varphi = A_\varphi \circ \Phi, \]
where we use the same notation for the~$A_\varphi$ over decorated clumped and aromatic forests.
We describe the substitution law for S\nobreakdash-series over decorated aromatic forests by using the fact that any functional over decorated aromatic forests~$a \in \AF_D^*$ can be written as an image of the map~$\Phi^*$, that is, there exists a functional~$a_C \in \CF_D^*$ such that~$a = a_C \circ \Phi^*$.

\begin{theorem}
    Using the notation from Theorem~\ref{thm:substitution_law} and given~$a \in \AF_D$, we have
    \[ S_\varphi (a) = S(b_c^{\otimes D} \star a), \]
    with convolution product~$\star$ with respect to~$\Delta_{CEM} : \AF_D \to \CF_D^{\otimes D} \otimes \AF_D$.
\end{theorem}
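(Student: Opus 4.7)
The plan is to reduce the statement to its clumped analog (Theorem~\ref{thm:substitution_law}) by transferring everything through the surjection $\Phi$ and its adjoint $\Phi^*$. Since $\Phi^*$ is injective, every functional $a \in \AF_D^*$ can be written in the form $a = a_C \circ \Phi^*$ for some $a_C \in \CF_D^*$, as noted in the paragraph immediately preceding the theorem (an explicit choice being the normalized rescaling $a_C(\pi) = n^{-m} a(\Phi(\pi))$). I would fix such a lift $a_C$ and work with it throughout.

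First I would show that the elementary differential side is insensitive to the choice of lift, that is, $S_\varphi(a) = S_\varphi(a_C)$. By the defining identity $(\Phi \circ \delta_\sigma)(a) = \delta_\sigma(a \circ \Phi^*)$, we have $\delta_\sigma(a) = \Phi(\delta_\sigma(a_C))$. Applying $A_\varphi$ and using the intertwining relation $\Phi \circ A_\varphi = A_\varphi \circ \Phi$ stated in the paragraph before the theorem gives $A_\varphi(\delta_\sigma(a)) = \Phi(A_\varphi(\delta_\sigma(a_C)))$. Composing with $F_D$ and invoking $F_D = F_D \circ \Phi$ (which is how $F_D$ was extended to $\CF_D$ in Section~\ref{sec:decorated_clumped_forests}), we obtain $S_\varphi(a) = S_\varphi(a_C)$. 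Theorem~\ref{thm:substitution_law} then yields $S_\varphi(a_C) = S(b_c^{\otimes D} \star a_C)$, where $\star$ is taken with respect to the coaction $\Delta_{CEM} : \CF_D \to \CF_D^{\otimes D} \otimes \CF_D$.

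Next I would show that the substitution side transfers correctly, namely $b_c^{\otimes D} \star a = (b_c^{\otimes D} \star a_C) \circ \Phi^*$ where the left-hand side uses the aromatic coaction $\Delta_{CEM} : \AF_D \to \CF_D^{\otimes D} \otimes \AF_D$. This is a direct calculation: for any $\pi \in \AF_D$,
\[
(b_c^{\otimes D} \star a)(\pi) = (b_c^{\otimes D} \otimes a_C \circ \Phi^*)(\Delta_{CEM}(\pi)) = (b_c^{\otimes D} \otimes a_C)\bigl((\id \otimes \Phi^*) \Delta_{CEM}(\pi)\bigr),
\]
and applying the comodule morphism identity $\Delta_{CEM} \circ \Phi^* = (\id \otimes \Phi^*) \circ \Delta_{CEM}$ recalled just before the theorem converts this into $((b_c^{\otimes D} \star a_C) \circ \Phi^*)(\pi)$. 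Using once more $\delta_\sigma(c_C \circ \Phi^*) = \Phi(\delta_\sigma(c_C))$ and $F_D = F_D \circ \Phi$ with $c_C = b_c^{\otimes D} \star a_C$, we get $S((b_c^{\otimes D} \star a_C) \circ \Phi^*) = S(b_c^{\otimes D} \star a_C)$, which chains with the previous equalities to give the desired $S_\varphi(a) = S(b_c^{\otimes D} \star a)$.

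The only point requiring genuine care is ensuring that all the ambient extensions — the substitution action $\subs : \CF_D^{\otimes D} \otimes \AF_D \to \AF_D$, the coaction $\Delta_{CEM}$ in the aromatic setting, and the morphism $A_\varphi$ lifted from $\CF_D$ to $\AF_D$ — are mutually compatible under $\Phi$ and $\Phi^*$. This is essentially the content of the three intertwining identities quoted just before the theorem ($\Phi$ being a $\CF_D^{\otimes D}$-module morphism, $\Phi^*$ being the dual comodule morphism, and $\Phi$ commuting with $A_\varphi$), all of which follow from the freeness of the tracial commutative D-algebra $(\AF_D, \graft)$ established in Proposition~\ref{proposition:freeness}. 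Once these compatibilities are in hand, the argument reduces to bookkeeping and the theorem is immediate.
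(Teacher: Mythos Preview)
Your proposal is correct and follows essentially the same route as the paper: lift $a$ to $a_C$ via $a = a_C \circ \Phi^*$, then run the chain $S_\varphi(a) = S_\varphi(a_C) = S(b_c^{\otimes D} \star a_C) = S((b_c^{\otimes D} \star a_C) \circ \Phi^*) = S(b_c^{\otimes D} \star a)$ using exactly the intertwining identities $\Phi \circ A_\varphi = A_\varphi \circ \Phi$, $F_D = F_D \circ \Phi$, and $\Delta_{CEM} \circ \Phi^* = (\id \otimes \Phi^*) \circ \Delta_{CEM}$ that you cite. The paper's proof is a one-line compression of what you have written out in detail.
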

\begin{proof}
  We note that given a functional~$a \in \AF_D^*$, we can define a functional~$a_C \in \CF_D^*$ such that~$a = a_C \circ \Phi^*$, moreover,~$S(a) = S(a_C)$. The statement follows from the following identities,
    \[ S_\varphi (a) = S_\varphi (a_C) = S(b_c^{\otimes D} \star a_C) = S((b_c^{\otimes D} \star a_C) \circ \Phi^*) = S(b_c^{\otimes D} \star (a_C \circ \Phi^*)) = S(b_c^{\otimes D} \star a), \]
     with~$b_c^{\otimes D} := \otimes_{d \in D} b_{c,d} \iota_d$.
\end{proof}

Corollary~\ref{corr:cointeraction} is extended to functionals over decorated aromatic forests in a similar fashion.

\subsection{Algebraic structures of exotic aromatic forests}
\label{sec:exotic_algebraic_structure}

We extend the results of Section~\ref{sec:algebraic_structure} to the space~$\EAF$ of exotic aromatic forests defined in Definition~\ref{def:EAF}. To do so, we build the commutative tracial D-algebra~$(\EAF, \graft)$ of exotic aromatic forests using a commutative tracial D-algebra~$(\AF_{\bullet\N}, \graft)$ of decorated aromatic forests.

We define the D-algebra~$(\AF_{\bullet\N}, \graft)$ to be a vector space spanned by aromatic forests with black vertices and vertices decorated by natural numbers~$\N$. The decoration is denoted by $\alpha : V(\pi) \to \{\bullet\} \cup \N$ and we assume that every number decorates an even number of vertices, that is,~$\alpha^{-1}(n)$ is even for~$n \in \N$, and a vertex decorated by a number cannot have an incoming edge. We note that our assumptions on the structure of~$\pi \in \AF_{\bullet\N}$ are compatible with the commutative tracial D-algebra structure. For example,
\[ \forest{b[1,1,2,b[1]],b[2,1]} \cdot \forest{b[1,1,2,2]} = \forest{b[1,1,2,b[1]],b[2,1],b[1,1,2,2]}, \quad \forest{b[1,1]} \graft \forest{b[b[1],1]} = \forest{b[b[1,1],b[1],1]} + \forest{b[b[b[1,1],1],1]}. \]
We can obtain the D-algebra~$(\AF_{\bullet\N}, \graft)$ by choosing an appropriate sub-D-algebra and taking the quotient over an appropriate ideal of the free D-algebra. We define the commutative tracial D-algebra of exotic forests~$(\EAF, \graft)$ as a sub-D-algebra of~$(\overline\AF_{\bullet\N}, \graft)$ spanned by the elements
\begin{equation}
\label{eq:EAF_def}
  (\pi, \alpha_e) := \sum_{\alpha \in P(\alpha_e)} (\pi, \alpha) \  \in \  \overline{\AF}_{\bullet\N},
\end{equation}
with~$|\alpha_e^{-1}(n)| \in \{0, 2\}$ for~$n \in \mathbb{N}$.~$P(\alpha_e)$ is the set of decorations~$\alpha$ with~$\alpha^{-1}(\bullet) = \alpha_e^{-1}(\bullet)$ and 
\[ \alpha(v_1) = \alpha(v_2) \quad \text{if } \quad \alpha_e(v_1) = \alpha_e(v_2), \quad \text{for } v_1, v_2 \in V(\pi). \]
We note that the exotic aromatic forests constructed in this way agree with Definition~\ref{def:EAF}.
The D-algebra~$(\EAF, \graft)$ of exotic aromatic forests is graded by the number of roots. 
The algebraic structures related to D-algebras are studied in Section~\ref{sec:D-algebra}. 

An exotic aromatic forest is connected if it cannot be written as a concatenation of non-trivial exotic aromatic forests. This notion of connectedness coincides with the one found in~\cite{Laurent23tue}. We note that due to the pairings of the number vertices (that are also called \textit{lianas}), exotic aromatic forests can contain connected components which have more than one root, which is a major difference with the standard Butcher trees and forests. For example, the following exotic aromatic forests are connected:
\[ \forest{1,1}, \quad \forest{b[b,1],b[b,b[1]]}, \quad \forest{b[1],b[1,2],b[2,3],3}. \]
We consider the coalgebra~$(\EAF, \Delta_\EA)$ with~$\EA$-linear deshuffle coproduct~$\Delta_{\EA}$ whose $\EA$-module of primitive elements~$\Prim(\EAF, \Delta_\EA)$ is spanned by the connected exotic aromatic forests, for example,
\[ \Delta_{\EA} (\forest{(b[1,1]),b[2],b[2,3],3}) = \forest{(b[1,1]),b[2],b[2,3],3} \otimes \mathbf{1} + \mathbf{1} \otimes \forest{(b[1,1]),b[2],b[2,3],3}. \]

We note that the space of exotic aromatic forests~$\EAF$ can be defined as the symmetric algebra~$S_\EA (\PEAF)$ over the ring of exotic aromas. Analogously to Section~\ref{sec:decorated_clumped_forests}, we define two possible extensions of the concept of clumped forests to the exotic context,
\[ \CEF := S(\PEAF), \quad \CEFone := S(\EAT).\]
We note that both symmetric algebras are over the base field~$\R$, meaning that the exotic aromas are attached to the rooted components. We recall that~$\EAT$ is the space of exotic aromatic forests with one root and~$\CEFone \subset \CEF$.
Some elements of~$\CEF$ are
\[ \forest{(b),b[1],1} \cdot \forest{b[b,b[2]],b[2]} \cdot \forest{b[3],b[3,4],4}, \quad \forest{b[1]=b[1],b[b,2],2}. \]
We note that~$\forest{(b),b[1,1]} \cdot \forest{(b[2]),b[2]} \neq \forest{b[1,1]} \cdot \forest{(b),(b[2]),b[2]}$ in~$\CEF$.

Let~$\Phi : \CEF \to \EAF$ denote the map that forgets the "clumping", that is, it satisfies $\ker{\Phi} = \langle \omega \tau \cdot \gamma \cdot \pi - \tau \cdot \omega \gamma \cdot \pi \; : \; \omega \in \EA, \tau, \gamma \in \EAT, \pi \in \CEF \rangle$.
The results of Section~\ref{section:Decorated aromatic forests} yield the following structure on exotic aromatic and exotic clumped forests.
\begin{theorem}
  The space of exotic aromatic forests forms a Grossman-Larson Hopf algebroid
  \[ (\EAF, \mathbf{1}, \gl, \epsilon_{\EA}, \Delta_{\EA}, S_\gl),\]
  and the space of clumped exotic forests forms a Grossman-Larson Hopf algebra
  \[ (\CEF, \mathbf{1}, \gl, \epsilon, \Delta, S_\gl^C).\]
  Moreover,~$\Phi : \CEF \to \EAF$ is a surjective algebra morphism.
\end{theorem}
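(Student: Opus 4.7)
The plan is to reduce this theorem to the general D-algebra results of Sections~\ref{sec:D-algebra} and~\ref{sec:decorated_clumped_forests}, using the fact that $\EAF$ was already built as a sub-D-algebra of $(\overline{\AF}_{\bullet \N}, \graft)$ via the symmetrization over liana pairings in \eqref{eq:EAF_def}, and that $\EAF = S_\EA(\PEAF)$ while $\CEF = S(\PEAF)$ in direct analogy with the construction of $\AF_D$ and $\CF_D$.

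First I would verify that the sub-D-algebra structure is well-defined on $\EAF$. The key observation is that $\graft$, concatenation, the bilinear stolon form $\langle \blank,\blank\rangle$, and the trace all act only on black vertices and their adjacencies: they never create or destroy numbered leaves, nor alter which numbered leaves are paired. Consequently the defining symmetrization of \eqref{eq:EAF_def} is preserved by every D-algebra operation, and Proposition~\ref{proposition:freeness} together with Proposition~\ref{prop:GuinOudom-pLR} extends $\graft$ uniquely from $\PEAF$ to $\EAF = S_\EA(\PEAF)$. This endows $\EAF$ with a commutative tracial D-algebra structure graded by the number of roots, with $\EAF_0 = \EA$.

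Once this framework is in place, Proposition~\ref{GL-Rinehart-bialgebra} applies verbatim to deliver the Grossman-Larson $\EA/_\R$-bialgebroid $(\EAF, \mathbf{1}, \gl, \epsilon_\EA, \Delta_\EA)$, where $\gl$ is obtained from $\graft$ via $(\pi_1 \gl \pi_2)\graft \blank = \pi_1 \graft (\pi_2 \graft \blank)$, and Proposition~\ref{prop:antipode} then provides the antipode $S_\gl$ by the same inductive formulas on aromas and trees, promoting the bialgebroid to a Hopf algebroid. For the clumped version $\CEF = S(\PEAF)$ over $\R$, the Guin-Oudom process applies to the pre-Lie algebra $(\PEAF, \graft)$ (ignoring the $\EA$-module structure), yielding a commutative D-algebra with $\CEF_0 = \{\mathbf{1}\}$. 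The $\EA/_\R$-bialgebroid then collapses to a graded connected bialgebra, which is automatically a Hopf algebra with antipode $S_\gl^C$ defined recursively as in Proposition~\ref{prop:antipode}; this is exactly Theorem~\ref{thm:CF_Hopf_alg} applied to the exotic primitives.

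Finally, surjectivity of $\Phi : \CEF \to \EAF$ is immediate since every element of $\EAF = S_\EA(\PEAF)$ admits a preimage in $\CEF = S(\PEAF)$ obtained by distributing each aroma factor onto some chosen rooted component. The fact that $\Phi$ respects concatenation follows from its definition via the kernel $\langle \omega \tau \cdot \gamma \cdot \pi - \tau \cdot \omega \gamma \cdot \pi\rangle$, and the $\graft$-compatibility (hence $\gl$-compatibility) is inherited from the universal property of the Guin-Oudom extension, as in the analogous $\CF_D \to \AF_D$ case. I expect the only real obstacle to be the bookkeeping in Step~1: one must check uniformly that the liana-pairing decorations, which behave formally like stolons but are tied to the elementary differential through $\delta_{I_{S \cup L}}$, remain stable under the Guin-Oudom extension; this is a combinatorial verification rather than a conceptual one, but it underlies the validity of treating $\PEAF$ as the generating primitive module in both $\EAF$ and $\CEF$.
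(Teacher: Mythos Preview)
There is a genuine gap. Your argument hinges on treating $(\PEAF,\graft)$ as a pre-Lie algebra and then invoking the Guin--Oudom process and Proposition~\ref{prop:antipode} verbatim. But $\PEAF$ contains connected exotic forests with more than one root (for instance $\forest{1,1}$ or $\forest{b[1],1}$), and on such elements $\graft$ is already the Guin--Oudom extended product inherited from $\overline{\AF}_{\bullet\N}$, not a pre-Lie product. A direct check shows the pre-Lie identity fails: with $x=\forest{1,1}$ and $y=z=\forest{b}$ one finds
\[
x\graft(y\graft z)-(x\graft y)\graft z=\forest{b[b,1,1]}+2\,\forest{b[b[1],1]},\qquad
y\graft(x\graft z)-(y\graft x)\graft z=\forest{b[b,1,1]}.
\]
Hence Proposition~\ref{prop:GuinOudom-pLR} cannot be applied with $\PEAF$ in the role of $\AT_D$, and the antipode recursion of Proposition~\ref{prop:antipode}, which is formulated for $\tau$ in the degree-one part $\TT_D$, does not cover multi-root primitives such as $\forest{1,1}$: every element of $\EAF$ would need to be a product of exotic aromas and single-rooted exotic trees, which is false.

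The paper closes this gap by passing to the Loday--Ronco pre-Lie product $\pi_1\tildegraft\pi_2:=\pi_1\gl\pi_2-\pi_1\cdot\pi_2$ on $\PEAF$, which is automatically pre-Lie because $(\EAF,\gl,\Delta_\EA)$ is a cocommutative bialgebra whose underlying coalgebra is cofree on its primitives. The antipode recursion of Proposition~\ref{prop:antipode} is then rewritten with $\graft$ replaced by $\tildegraft$ and $\TT_D$ replaced by $\Prim(\EF,\Delta)$. Note that $\tildegraft$ genuinely differs from $\graft$ on multi-root primitives: for example $\forest{1,1}\tildegraft\forest{b}=2\,\forest{b[1],1}+\forest{b[1,1]}$, whereas $\forest{1,1}\graft\forest{b}=\forest{b[1,1]}$. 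Once this replacement is made, the remainder of your outline (the bialgebroid structure via Proposition~\ref{GL-Rinehart-bialgebra}, the reduction of $\CEF$ to a graded connected Hopf algebra, and the surjectivity and multiplicativity of $\Phi$) goes through essentially as you describe.
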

\begin{proof}
  We note that, according to the definition (\ref{eq:EAF_def}),~$\EAF$ is obtained by taking a sub-D-algebra of~$\overline{\AF}_{\bullet\N}$, therefore, we can use the analysis from Section~\ref{sec:D-algebra} to build a~$\AA/_\R$-bialgebra $(\EAF, \mathbf{1}, \gl, \epsilon_{\EA}, \Delta_{\EA})$. We obtain a bialgebra~$(\CEF, \mathbf{1}, \gl, \epsilon, \Delta)$ by noting that~$\CEF$ can be obtained by taking a sub-D-algebra of D-algebra~$\overline{\CF}_{\bullet\N}$ which is a special case of~$\CF_D$~(Section~\ref{sec:decorated_clumped_forests}).

  Following~\cite{LodayRonco}, the primitive elements~$\PEAF$ are endowed with a pre-Lie product:
\[ \pi_1 \tildegraft \pi_2 := \pi_1 \gl \pi_2 - \pi_1 \cdot \pi_2, \quad \text{for } \pi_1, \pi_2 \in \PEAF. \]
  We use the pre-Lie product~$\tildegraft$ to define the antipode~$S_\gl$ for~$\EAF$ by replacing all instances of~$\graft$ in Proposition~\ref{prop:antipode} by the product~$\tildegraft$, all instances of~$\tau \in \TT_D$ by~$\tau \in \Prim(\EF, \Delta)$, and~$\omega \in \AA_D$ by~$\omega \in \EA$ where~$\EF$ denotes the space of exotic forests, that is, exotic aromatic forests without aromas. The antipode~$S_\gl^C$ for~$\CEF$ is obtained in a similar way, but the trees~$\tau \in \TT_D$ are replaced by~$\tau \in \PEAF$ and the identities~$(i)$ are ignored. This proves that we have a Hopf algebroid and Hopf algebra structures over~$\EAF$ and~$\CEF$, respectively. 
  We note that~$\Phi(\one) = \one$ and~$\Phi(\pi \gl \mu) = \Phi(\pi) \gl \Phi(\mu)$, so~$\Phi$ is a surjective algebra morphism.
\end{proof}

\begin{theorem}
\label{thm:CEF_Hopf}
  The space of clumped exotic forests~$\CEF$ forms a Hopf algebra
  \[(\CEF, \one, \cdot, \one^*, \Delta_{CEM}, S),\]
  where~$\Delta_{CEM}$ is the coproduct extended from~$\Prim(\EAF)$ to~$\CEF$ by respecting the concatenation product.
  Moreover,~$\Phi^* : \EAF \to \CEF$ is a~$\CEFone$-comodule morphism where~$\Phi^*$ is the adjoint of~$\Phi$, that is,~$\Phi \circ \delta_\sigma = \delta_\sigma \circ \Phi^*$.
\end{theorem}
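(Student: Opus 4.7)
The plan is to build the Hopf algebra structure on~$\CEF$ in close analogy with the bialgebra/Hopf algebra construction for decorated clumped forests (Theorem~\ref{thm:clumped_CEM_bialgebra}), then derive the comodule morphism property for~$\Phi^*$ from the interaction between~$\Phi$ and the substitution-type products. First I would define~$\Delta_{CEM}$ on a primitive element~$\pi \in \Prim(\EAF)$ by the sub-forest contraction formula of Definition~\ref{def:exotic_CEM_coaction}, extending it so that the sub-forests~$p$ are drawn from~$\CEF$ (not merely~$\CEFone$) to accommodate connected primitive components with several rooted pieces linked by lianas; as always, paired number vertices must lie on the same side of the tensor product or else be contracted into the same component. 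I would then extend~$\Delta_{CEM}$ multiplicatively to all of~$\CEF = S(\Prim(\EAF))$ by declaring it an algebra morphism for the concatenation product; this is well-defined because~$\CEF$ is free as a symmetric algebra over~$\Prim(\EAF)$.

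Next I would verify coassociativity. On primitive elements this reduces to the combinatorial statement that the two iterations of~$\Delta_{CEM}$ enumerate the same set of triples obtained by partitioning the black vertices of~$\pi$ into three successive clumped sub-structures and contracting them in order; the stolon and liana constraints are preserved because at each stage paired leaves sit in the same contracted component. The bialgebra axioms are then automatic from the multiplicative extension, and because~$\CEF$ is graded by the exotic order~$|\cdot|$, connected with~$\CEF_0 = \R\,\one$, commutative, and has a reduced CEM coproduct landing in strictly positive degrees on each factor, the standard graded-connected-bialgebra argument produces the antipode~$S$ recursively, closing the Hopf algebra~$(\CEF, \one, \cdot, \one^*, \Delta_{CEM}, S)$.

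For the $\CEFone$-comodule morphism claim, I would use the fact that~$\Phi : \CEF \to \EAF$ is a surjective algebra morphism that intertwines the substitution actions, namely~$\Phi(p \subs \pi) = p \subs \Phi(\pi)$ for~$p \in \CEFone$ and~$\pi \in \CEF$. Dualizing this identity with respect to the inner product~$\langle\blank,\blank\rangle_\sigma$, in the spirit of Proposition~\ref{prop:subs_cemdual} and the analogous comodule morphism statement already established for decorated aromatic forests in Section~\ref{sec:substitution_law}, yields the identity~$\Delta_{CEM} \circ \Phi^* = (\id \otimes \Phi^*) \circ \Delta_{CEM}$, which is exactly the asserted comodule morphism property.

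The main obstacle I anticipate is the liana bookkeeping in the coassociativity step. Unlike the decorated case, where vertex decorations are independent and the combinatorics reduce cleanly to partitions of the vertex set, the lianas impose nonlocal pairing constraints that must be respected when a sub-forest is itself subdivided; one must check carefully that "contract then contract" and "partition then contract" yield the same sum of terms with matching multiplicities coming from the symmetry factors. Once this combinatorial lemma is settled, all remaining Hopf-algebraic and module-theoretic assertions follow along the template of Theorem~\ref{thm:clumped_CEM_bialgebra}.
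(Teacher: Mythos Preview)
Your proposal is correct in spirit, but it takes a genuinely different route from the paper.

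The paper does \emph{not} verify coassociativity of~$\Delta_{CEM}$ on~$\CEF$ by a direct combinatorial argument on primitive elements with liana bookkeeping. Instead, it realises~$\CEF$ as a subspace of~$\overline{\CF}_D$ with~$D=\{\bullet\}\cup\N$, so that lianas become ordinary decorations, and then invokes Theorem~\ref{thm:clumped_CEM_bialgebra} (the bialgebra~$B_{CEM}$ on~$\CF_D^{\otimes D}$, where coassociativity was already obtained by duality with the associative substitution product via Proposition~\ref{prop:subs_cemdual}). The crucial extra step is to quotient~$B_{CEM}$ by the ideal~$\langle(\one-\bullet)\iota_\bullet\rangle+\JJ$ with
\[\JJ=\langle(\one-\forest{i_k})\iota_{\forest{i_k}},\ \pi\iota_{\forest{i_k}}:\pi\notin\{\one,\forest{i_k}\},\ k\in\N\rangle,\]
which forces number vertices to be substituted only by themselves. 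This yields a Hopf algebra whose coproduct is the sub-forest contraction formula you wrote down, and the dual construction to~\eqref{eq:EAF_def} then gives the Hopf algebra on~$\CEF$. The~$\CEFone$-comodule morphism part is handled exactly as you indicate, by referring back to the~$\Phi^*$ argument in Section~\ref{sec:substitution_law}.

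What each approach buys: the paper's route completely sidesteps the liana bookkeeping you flagged as the main obstacle, since in the~$\CF_D$ framework the pairing constraint is encoded in the quotient ideal~$\JJ$ rather than in a side condition one must track through iterated coproducts. Your direct approach is more self-contained and would avoid the duality machinery, but the coassociativity check with lianas that you isolate as difficult is precisely what the paper's embedding circumvents.
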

\begin{proof}
  The space~$\CEF$ can be defined analogously to~$\EAF$ using (\ref{eq:EAF_def}) such that~$\CEF \subset \overline\CF_D$ with~$D = \{ \bullet \} \cup \N$. We recall that, following Theorem~\ref{thm:clumped_CEM_bialgebra},~$B_{CEM} := (\CF_D^{\otimes D}, \cdot, \Delta_{CEM})$ is a bialgebra which becomes a Hopf algebra~$H_{CEM} := (\CF_D, \cdot, \Delta_{CEM})$ once we take the quotient of~$B_{CEM}$ by the ideal~$\langle (\one - \forest{b})\iota_\bullet \rangle + \JJ$ defined as
  \[ \JJ := \langle (\one - \forest{i_k}) \iota_{\forest{i_k}}, \pi \iota_{\forest{i_k}} \; : \; \pi \notin \{ \one, \forest{i_k} \}, k \in \N \rangle. \]
  The obtained coproduct of~$H_{CEM}$ can now be described as
  \[ \Delta_{CEM} (\pi) = \sum_{p \subset \pi} p \otimes \pi /_p, \]
  where the sum is over all clumped subforests~$p \in \CF_D$ that cover all black vertices and~$\pi /_p$ is the clumped forest obtained by contracting the aromatic trees of~$p$ into black vertices. If the forest~$\pi \in \CF_D$ doesn't have valid subforests~$p \in \CF_D$, then~$\Delta_{CEM} (\pi) = \one \otimes \pi$.
  We note that the construction of~$\CEF$ with the CEM Hopf algebra structure as a quotient and subspace of~$\overline\CF_D$ must be dual to the one seen in (\ref{eq:EAF_def}). We obtain the Hopf algebra
  \[(\CEF, \one, \cdot, \one^*, \Delta_{CEM}, S),\]
  with~$\Delta_{CEM} : \CEF \to \CEF \otimes \CEF$. We recall the analysis of Section~\ref{sec:substitution_law} and note that since~$\exp^\odot_C (b_0)$ is a character of~$\CEFone$, we consider~$\Delta_{CEM} : \CEF \to \CEFone \otimes \CEF$. We refer to the discussion from Section~\ref{sec:substitution_law} for showing that~$\Phi^*$ is a~$\CEFone$-comodule morphism.
\end{proof}

We can simplify the computation of the substitution law for exotic aromatic S\nobreakdash-series using the map~$\Phi$ and Theorem~\ref{thm:CEF_Hopf}.
Given a functional~$a \in \EAF^*$, we use the discussion from Section~\ref{sec:decorated_clumped_forests} to define a functional~$a_C \in \CEF$ over clumped exotic forests as
\[ a_C (\pi) := \frac{1}{n^m} a(\Phi(\pi)), \]
where~$n$ is the number of rooted components and~$m$ is the number of aromas. We have the property~$a = a_C \circ \Phi^*$ which we use to compute the substitution law as follows
\[ b_c \star a = b_c \star (a_C \circ \Phi^*) = (b_c \star a_C) \circ \Phi^*. \]
The substitution law~$b_c \star a_C$ over~$\CEF$ is easier to compute since~$\Delta_{CEM}$ over~$\CEF$ respects concatenation. Therefore, computing the values of~$\Delta_{CEM}$ on~$\PEAF$ is enough to obtain its values over all clumped exotic forests.

\begin{ex}
  Let us compute the CEM coproduct over~$\EAF$ using the comodule morphism~$\Phi^*$.
\begin{align*}
    (\Delta_{CEM} \circ \Phi^*) (\forest{(b),b[1],1,b}) &= \Delta_{CEM} (\forest{(b),b[1],1} \cdot \forest{b}) + \Delta_{CEM} (\forest{b[1],1} \cdot \forest{(b),b}) \\
    &= \Delta_{CEM} (\forest{(b),b[1],1}) \hat\cdot \Delta_{CEM} (\forest{b}) + \Delta_{CEM} (\forest{b[1],1}) \hat\cdot \Delta_{CEM} (\forest{(b),b}),
\end{align*}
where~$(\pi_{(1)} \otimes \pi_{(2)}) \hat\cdot (\mu_{(1)} \otimes \mu_{(2)}) = (\pi_{(1)} \cdot \mu_{(1)}) \otimes (\pi_{(2)} \cdot \mu_{(2)})$ and
\vspace{-0.5cm}
\begin{center}
\begin{minipage}{0.4\linewidth}
    \begin{align*}
        \Delta_{CEM} (\forest{(b),b[1],1}) &= \forest{b,b} \otimes \forest{(b),b[1],1} + \forest{(b),b} \otimes \forest{b[1],1}, \\
        \Delta_{CEM} (\forest{(b),b}) &= \forest{b,b} \otimes \forest{(b),b} + \forest{(b),b} \otimes \forest{b},
    \end{align*}
\end{minipage}
\begin{minipage}{0.4\linewidth}
    \begin{align*}
        \Delta_{CEM} (\forest{b}) &= \forest{b} \otimes \forest{b}, \\
        \Delta_{CEM} (\forest{b[1],1}) &= \forest{b} \otimes \forest{b[1],1}.
    \end{align*}
\end{minipage}
\end{center}
Therefore, we have
\begin{align*}
    (\Delta_{CEM} \circ \Phi^*) (\forest{(b),b[1],1,b}) 
    &= \forest{b,b,b} \otimes \forest{(b),b[1],1} \cdot \forest{b} + \forest{(b),b} \cdot \forest{b} \otimes \forest{b[1],1,b} \\
    &+ \forest{b,b,b} \otimes \forest{b[1],1} \cdot \forest{(b),b} + \forest{(b),b} \cdot \forest{b} \otimes \forest{b[1],1,b} \\
    &= \forest{b,b,b} \otimes \Phi^* (\forest{(b),b[1],1,b}) + 2 \forest{(b),b} \cdot \forest{b} \otimes \Phi^*(\forest{b[1],1,b}),
\end{align*}
which agrees with a direct computation that gives
\[ \Delta_{CEM} (\forest{(b),b[1],1,b}) = \forest{b,b,b} \otimes \forest{(b),b[1],1,b} + 2 \forest{(b),b} \cdot \forest{b} \otimes \forest{b[1],1,b}. \]
\end{ex}

\section{Conclusion}

In this paper, we uncover the fundamental algebraic structures that govern the approximations in the weak sense and for the invariant measure of stochastic dynamics with additive noise in both Euclidean and manifold settings.
We present in particular the novel concept of clumping for the Hopf algebra structures associated with substitution using aromas. Additionally, we introduce the free commutative tracial D-algebra of decorated aromatic forests, along with the related Grossman-Larson Hopf algebroid and pre-Hopf algebroid.
This algebraic study enables us to describe the substitution law for decorated aromatic forests by linking substitution to D-algebra homomorphisms.
Understanding the substitution law for exotic aromatic S\nobreakdash-series provides an elegant algebraic framework for the backward error analysis of stochastic dynamics at any order, with an explicit expression for the modified vector field.

There are a handful of challenging open questions that follow from the present work.
The deeper understanding of the integration by parts operation given in Proposition~\ref{proposition:prop_character_IBP} in the Euclidean setting is key in extending Theorems~\ref{theorem:BEA_Rd} and~\ref{theorem:modified_equation_Rd} to the manifold case for the creation of modified equations and the backward error analysis of projection methods. An explicit description of the combination of forests that vanish by integration by parts (see Remark~\ref{rk:kernel_IBP}) is necessary for the derivation of order conditions for the invariant measure at any order.
Following Remark~\ref{remark:EA_bicomplex}, an alternate approach could be to characterise the modified vector field of methods that preserve the invariant measure exactly in the spirit of~\cite{Laurent23tab,Laurent23tld} in the context of volume-preserving methods. The exotic aromatic formalism is the natural tool in this context as the operations~$\Div$ and~$\langle f,\blank \rangle$ of equation \eqref{equation:stochastic_VP} precisely generate the aromas.
This calls for future works in the spirit of~\cite{Laurent23tab,Laurent23tld}.

The Butcher forests and their extensions have been successfully applied to rough paths with the construction of branched rough paths~\cite{Gubinelli10ror} (see also~\cite{Hairer15gvn,Friz20aco}), planarly branched rough paths~\cite{MuntheKaas08oth,Curry20pbr,Ebrahimi23pei}, and aromatic rough paths~\cite{Lejay22cgr}. In this context, the substitution law corresponds to the translation of rough paths, and the notion of composition of exotic aromatic trees~\cite{MuntheKaas16abs,Laurent23tue} is closely linked to the Hopf algebra of multi-indices~\cite{Linares21tsg,Jacques23pla}.
The creation and study of rough paths structures arising from exotic aromatic or exotic clumped forests is natural and is matter for later work.
Moreover, we use the clumping idea to explain the algebraic behaviour brought by the divergence and the scalar product operators. One could consider general multilinear maps that transform trees into rootless graphs. The questions of the freeness, the general algebraic structure, and the potential applications are all interesting matters for follow ups.

Following the recent work~\cite{Bharath23sae}, the approach with projection methods for the sampling of ergodic dynamics on manifolds could be replaced with an intrinsic approach with Lie-group methods. This new approach could yield simple and efficient intrinsic discretisations of high-order for sampling the invariant measure, which could be combined with other popular techniques such as postprocessing, Metropolisation,\dots
It could also greatly simplify the tedious algebraic structure associated to projection methods and yield a structure similar to the ones appearing in the study of Lie group methods~\cite{Iserles00lgm}. This will be studied in upcoming works.

\bigskip

\noindent \textbf{Acknowledgements.}\
The authors would like to thank Dominique Manchon, Hans Munthe-Kaas, and Gilles Vilmart for helpful discussions.
The authors acknowledge the support of the Swiss National Science Foundation, projects No 200020\_214819, and No. 200020\_192129, of the Research Council of Norway through project 302831 ``Computational Dynamics and Stochastics on Manifolds'' (CODYSMA), and of the French program ANR-11-LABX-0020-0 (Labex Lebesgue).

\small
\bibliography{Ma_Bibliographie,diss}

\vskip-1ex
\normalsize
\begin{appendices}

\section{Examples for forests of small order}
\label{section:tables_examples}

We present in Tables~\ref{appendix:BCK_coproduct} and~\ref{appendix:CEM_coaction} the Butcher-Connes-Kreimer coproduct and the substitution law of all primitive exotic aromatic forests~$\Prim(\EAF, \Delta_{\EA})$ of order up to three. 
    To compute the Butcher-Connes-Kreimer coproduct efficiently, we use Remark~\ref{remark:GL_Hopf_algebra} which implies the property
    \[ \Delta_{BCK} (\pi_1 \cdot \pi_2) = \Delta_{BCK} (\pi_1) \hat\cdot \Delta_{BCK} (\pi_2), \]
    where~$(x \otimes y) \hat\cdot (u \cdot v) = (x \cdot u) \otimes (y \cdot v)$ and~$\pi_1, \pi_2 \in \AF_D$. Therefore, the values of the Butcher-Connes-Kreimer coproduct on primitive exotic aromatic forests are enough to compute the values for general exotic aromatic forests in a straightforward way.

    The forests in the table are ordered according to their order as defined in Definition~\ref{def:EAF} and by the number of roots.

\renewcommand*{\arraystretch}{1.4}
\begin{longtable}{c|c}
    \caption{Butcher-Connes-Kreimer coproduct for all primitive exotic aromatic forests up to order three}
    \label{appendix:BCK_coproduct} \\
   ~$\pi$ &~$\Delta_{BCK}(\pi)$ \\
    \hhline{=|=}
    \forest{(b)} &~$\one \otimes \forest{(b)} + \forest{(b)} \otimes \one$ \\
    \forest{b=b} &~$\one \otimes \forest{b=b} + \forest{b=b} \otimes \one$ \\
    \forest{b} &~$\one \otimes \forest{b} + \forest{b} \otimes \one$ \\
    \forest{1,1} &~$\one \otimes \forest{1,1} + \forest{1,1} \otimes \one$ \\
    \hline
    \forest{(b[b])} &~$\one \otimes \forest{(b[b])} + \forest{b} \otimes \forest{(b)} + \forest{(b[b])} \otimes \one$ \\
    \forest{(b,b)} &~$\one \otimes \forest{(b,b)} + \forest{(b,b)} \otimes \one$ \\
    \forest{(b[1,1])} &~$\one \otimes \forest{(b[1,1])} + \forest{1,1} \otimes \forest{(b)} + \forest{(b[1,1])} \otimes \one$ \\
    \forest{b=b[b]} &~$\one \otimes \forest{b=b[b]} + \forest{b} \otimes \forest{b=b} + \forest{b=b[b]} \otimes \one$ \\
    \forest{b=b[1,1]} &~$\one \otimes \forest{b=b[1,1]} + \forest{1,1} \otimes \forest{b=b} + \forest{b=b[1,1]} \otimes \one$ \\
    \forest{b[1]=b[1]} &~$\one \otimes \forest{b[1]=b[1]} + \forest{1,1} \otimes \forest{b=b} + \forest{b[1]=b[1]} \otimes \one$ \\
    \forest{b[b]} &~$\one \otimes \forest{b[b]} + \forest{b} \otimes \forest{b} + \forest{b[b]} \otimes \one$ \\
    \forest{b[1,1]} &~$\one \otimes \forest{b[1,1]} + \forest{1,1} \otimes \forest{b} + \forest{b[1,1]} \otimes \one$ \\
    \forest{(b[1]),1} &~$\one \otimes \forest{(b[1]),1} + \forest{1,1} \otimes \forest{(b)} + \forest{(b[1]),1} \otimes \one$ \\
    \forest{b=b[1],1} &~$\one \otimes \forest{b=b[1],1} + \forest{1,1} \otimes \forest{b=b} + \forest{b=b[1],1} \otimes \one$ \\
    \forest{b[1],1} &~$\one \otimes \forest{b[1],1} + \forest{1,1} \otimes \forest{b} + \forest{b[1],1} \otimes \one$ \\
    \hline
    \forest{(b[b[b]])} &~$\one \otimes \forest{(b[b[b]])} + \forest{b} \otimes \forest{(b[b])} + \forest{b[b]} \otimes \forest{(b)} + \forest{(b[b[b]])} \otimes \one$ \\
    \forest{(b[b,b])} &~$\one \otimes \forest{(b[b,b])} + 2 \forest{b} \otimes \forest{(b[b])} + \forest{b,b} \otimes \forest{(b)} + \forest{(b[b,b])} \otimes \one$ \\
    \forest{(b[b],b)} &~$\one \otimes \forest{(b[b],b)} + \forest{b} \otimes \forest{(b,b)} + \forest{(b[b],b)} \otimes \one$ \\
    \forest{(b,b,b)} &~$\one \otimes \forest{(b,b,b)} + \forest{(b,b,b)} \otimes \one$ \\
    \forest{(b[b[1,1]])} &~$\one \otimes \forest{(b[b[1,1]])} + \forest{1,1} \otimes \forest{(b[b])} + \forest{b[1,1]} \otimes \forest{(b)} + \forest{(b[b[1,1]])} \otimes \one$ \\
    \forest{(b[b[1],1])} &~$\one \otimes \forest{(b[b[1],1])} + \forest{1,1} \otimes \forest{(b[b])} + \forest{b[1],1} \otimes \forest{(b)} + \forest{(b[b[1],1])} \otimes \one$ \\
    \forest{(b[b,1,1])} &~$\one \otimes \forest{(b[b,1,1])} + \forest{1,1} \otimes \forest{(b[b])} + \forest{b} \otimes \forest{(b[1,1])} + \forest{b,1,1} \otimes \forest{(b)} + \forest{(b[b,1,1])} \otimes \one$ \\
    \forest{(b[1]),(b[1])} &~$\one \otimes \forest{(b[1]),(b[1])} + \forest{1,1} \otimes \forest{(b),(b)} + \forest{(b[1]),1} \otimes \forest{(b)} + \forest{(b[1]),1} \otimes \forest{(b)} + \forest{(b[1]),(b[1])} \otimes \one$ \\
    \forest{(b[1],b[1])} &~$\one \otimes \forest{(b[1],b[1])} + \forest{1,1} \otimes \forest{(b,b)} + \forest{(b[1],b[1])} \otimes \one$ \\
    \forest{(b[1,1],b)} &~$\one \otimes \forest{(b[1,1],b)} + \forest{1,1} \otimes \forest{(b,b)} + \forest{(b[1,1],b)} \otimes \one$ \\
    \forest{(b[1,1,2,2])} &~$\one \otimes \forest{(b[1,1,2,2])} + 2 \forest{1,1} \otimes \forest{(b[1,1])} + \forest{1,1,2,2} \otimes \forest{(b)} + \forest{(b[1,1,2,2])} \otimes \one$ \\
    \forest{b=b[b[b]]} &~$\one \otimes \forest{b=b[b[b]]} + \forest{b} \otimes \forest{b=b[b]} + \forest{b[b]} \otimes \forest{b=b} + \forest{b=b[b[b]]} \otimes \one$ \\
    \forest{b=b[b,b]} &~$\one \otimes \forest{b=b[b,b]} + 2 \forest{b} \otimes \forest{b=b[b]} + \forest{b,b} \otimes \forest{b=b} + \forest{b=b[b,b]} \otimes \one$ \\
    \forest{b[b]=b[b]} &~$\one \otimes \forest{b[b]=b[b]} + 2 \forest{b} \otimes \forest{b=b[b]} + \forest{b,b} \otimes \forest{b=b} + \forest{b[b]=b[b]} \otimes \one$ \\
    \forest{b=b[b[1,1]]} &~$\one \otimes \forest{b=b[b[1,1]]} + \forest{1,1} \otimes \forest{b=b[b]} + \forest{b[1,1]} \otimes \forest{b=b} + \forest{b=b[b[1,1]]} \otimes \one$ \\
    \forest{b=b[b[1],1]} &~$\one \otimes \forest{b=b[b[1],1]} + \forest{1,1} \otimes \forest{b=b[b]} + \forest{b[1],1} \otimes \forest{b=b} + \forest{b=b[b[1],1]} \otimes \one$ \\
    \forest{b[1]=b[b[1]]} &~$\one \otimes \forest{b[1]=b[b[1]]} + \forest{1,1} \otimes \forest{b=b[b]} + \forest{b[1],1} \otimes \forest{b=b} + \forest{b[1]=b[b[1]]} \otimes \one$ \\
    \forest{b=b[b,1,1]} &~$\one \otimes \forest{b=b[b,1,1]} + \forest{1,1} \otimes \forest{b=b[b]} + \forest{b} \otimes \forest{b=b[1,1]} + \forest{b,1,1} \otimes \forest{b=b} + \forest{b=b[b,1,1]} \otimes \one$ \\
    \forest{b[1]=b[b,1]} &~$\one \otimes \forest{b[1]=b[b,1]} + \forest{1,1} \otimes \forest{b=b[b]} + \forest{b} \otimes \forest{b[1]=b[1]} + \forest{b,1,1} \otimes \forest{b=b} + \forest{b[1]=b[b,1]} \otimes \one$ \\
    \forest{b[1,1]=b[b]} &~$\one \otimes \forest{b[1,1]=b[b]} + \forest{1,1} \otimes \forest{b=b[b]} + \forest{b} \otimes \forest{b[1,1]=b} + \forest{b,1,1} \otimes \forest{b=b} + \forest{b[1,1]=b[b]} \otimes \one$ \\
    \forest{b=b[1,1,2,2]} &~$\one \otimes \forest{b=b[1,1,2,2]} + 2 \forest{1,1} \otimes \forest{b=b[1,1]} + \forest{1,1,2,2} \otimes \forest{b=b} + \forest{b=b[1,1,2,2]} \otimes \one$ \\
    \forest{b[2]=b[1,1,2]} &~$\one \otimes \forest{b[2]=b[1,1,2]} + \forest{1,1} \otimes \forest{b=b[1,1]} + \forest{1,1} \otimes \forest{b[1]=b[1]} + \forest{1,1,2,2} \otimes \forest{b=b} + \forest{b[2]=b[1,1,2]} \otimes \one$ \\
    \forest{b[2,2]=b[1,1]} &~$\one \otimes \forest{b[2,2]=b[1,1]} + 2 \forest{1,1} \otimes \forest{b=b[1,1]} + \forest{1,1,2,2} \otimes \forest{b=b} + \forest{b[2,2]=b[1,1]} \otimes \one$ \\
    \forest{b[1,2]=b[1,2]} &~$\one \otimes \forest{b[1,2]=b[1,2]} + 2 \forest{1,1} \otimes \forest{b[1]=b[1]} + \forest{1,1,2,2} \otimes \forest{b=b} + \forest{b[1,2]=b[1,2]} \otimes \one$ \\
    \forest{b=b[1],(b[1])} &~$\one \otimes \forest{b=b[1],(b[1])} + \forest{1,1} \otimes \forest{b=b,(b)} + \forest{b=b[1],1} \otimes \forest{(b)} + \forest{(b[1]),1} \otimes \forest{b=b} + \forest{b=b[1],(b[1])} \otimes \one$ \\
    \forest{b=b[1],b[1]=b} &~$\one \otimes \forest{b=b[1],b[1]=b} + \forest{1,1} \otimes \forest{b=b,b=b} + 2 \forest{b=b[1],1} \otimes \forest{b=b} + \forest{b=b[1],b[1]=b} \otimes \one$ \\
    \forest{b[b[b]]} &~$\one \otimes \forest{b[b[b]]} + \forest{b} \otimes \forest{b[b]} + \forest{b[b]} \otimes \forest{b} + \forest{b[b[b]]} \otimes \one$ \\
    \forest{b[b,b]} &~$\one \otimes \forest{b[b,b]} + 2 \forest{b} \otimes \forest{b[b]} + \forest{b,b} \otimes \forest{b} + \forest{b[b,b]} \otimes \one$ \\
    \forest{b[b[1,1]]} &~$\one \otimes \forest{b[b[1,1]]} + \forest{1,1} \otimes \forest{b[b]} + \forest{b[1,1]} \otimes \forest{b} + \forest{b[b[1,1]]} \otimes \one$ \\
    \forest{b[b[1],1]} &~$\one \otimes \forest{b[b[1],1]} + \forest{1,1} \otimes \forest{b[b]} + \forest{b[1],1} \otimes \forest{b} + \forest{b[b[1],1]} \otimes \one$ \\
    \forest{b[b,1,1]} &~$\one \otimes \forest{b[b,1,1]} + \forest{1,1} \otimes \forest{b[b]} + \forest{b} \otimes \forest{b[1,1]} + \forest{b,1,1} \otimes \forest{b} + \forest{b[b,1,1]} \otimes \one$ \\
    \forest{b[1,1,2,2]} &~$\one \otimes \forest{b[1,1,2,2]} + 2 \forest{1,1} \otimes \forest{b[1,1]} + \forest{1,1,2,2} \otimes \forest{b} + \forest{b[1,1,2,2]} \otimes \one$ \\
    \forest{(b[b[1]]),1} &~$\one \otimes \forest{(b[b[1]]),1} + \forest{1,1} \otimes \forest{(b[b])} + \forest{b[1],1} \otimes \forest{(b)} + \forest{(b[b[1]]),1} \otimes \one$ \\
    \forest{(b[b,1]),1} &~$\one \otimes \forest{(b[b,1]),1} + \forest{1,1} \otimes \forest{(b[b])} + \forest{b} \otimes \forest{(b[1]),1} + \forest{b,1,1} \otimes \forest{(b)} + \forest{(b[b,1]),1} \otimes \one$ \\
    \forest{(b[1]),b[1]} &~$\one \otimes \forest{(b[1]),b[1]} + \forest{1,1} \otimes \forest{(b),b} + \forest{b[1],1} \otimes \forest{(b)} + \forest{(b[1]),1} \otimes \forest{b} + \forest{(b[1]),b[1]} \otimes \one$ \\
    \forest{(b,b[1]),1} &~$\one \otimes \forest{(b,b[1]),1} + \forest{1,1} \otimes \forest{(b,b)} + \forest{(b,b[1]),1} \otimes \one$ \\
    \forest{(b[1,2,2]),1} &~$\one \otimes \forest{(b[1,2,2]),1} + \forest{1,1} \otimes \forest{(b[1]),1} + \forest{1,1} \otimes \forest{(b[1,1])} + \forest{1,1,2,2} \otimes \forest{(b)} + \forest{(b[1,2,2]),1} \otimes \one$ \\
    \forest{b=b[b[1]],1} &~$\one \otimes \forest{b=b[b[1]],1} + \forest{1,1} \otimes \forest{b=b[b]} + \forest{b[1],1} \otimes \forest{b=b} + \forest{b=b[b[1]],1} \otimes \one$ \\
    \forest{b[b[1]],1} &~$\one \otimes \forest{b[b[1]],1} + \forest{1,1} \otimes \forest{b[b]} + \forest{b[1],1} \otimes \forest{b} + \forest{b[b[1]],1} \otimes \one$ \\
    \forest{b[b,1],1} &~$\one \otimes \forest{b[b,1],1} + \forest{1,1} \otimes \forest{b[b]} + \forest{b} \otimes \forest{b[1],1} + \forest{b,1,1} \otimes \forest{b} + \forest{b[b,1],1} \otimes \one$ \\
    \forest{b[1],b[1]} &~$\one \forest{b[1],b[1]} + \forest{1,1} \otimes \forest{b,b} + 2 \forest{b[1],1} \otimes \forest{b} + \forest{b[1],b[1]} \otimes \one$ \\
    \forest{b[1,2,2],1} &~$\one \otimes \forest{b[1,2,2],1} + \forest{1,1} \otimes \forest{b[1],1} + \forest{1,1} \otimes \forest{b[1,1]} + \forest{1,1,2,2} \otimes \forest{b} + \forest{b[1,2,2],1} \otimes \one$ \\
    \forest{b=b[b,1],1} &~$\one \otimes \forest{b=b[b,1],1} + \forest{1,1} \otimes \forest{b=b[b]} + \forest{b} \otimes \forest{b=b[1],1} + \forest{b,1,1} \otimes \forest{b=b} + \forest{b=b[b,1],1} \otimes \one$ \\
    \forest{b[1]=b[b],1} &~$\one \otimes \forest{b[1]=b[b],1} + \forest{1,1} \otimes \forest{b=b[b]} + \forest{b} \otimes \forest{b[1]=b,1} + \forest{b,1,1} \otimes \forest{b=b} + \forest{b[1]=b[b],1} \otimes \one$ \\
    \forest{b=b[1,1,2],2} &~$\one \otimes \forest{b=b[1,1,2],2} + \forest{1,1} \otimes \forest{b=b[1,1]} + \forest{1,1} \otimes \forest{b=b[1],1} + \forest{1,1,2,2} \otimes \forest{b=b} + \forest{b=b[1,1,2],2} \otimes \one$ \\
    \forest{b[2]=b[1,1],2} &~$\one \otimes \forest{b[2]=b[1,1],2} + \forest{1,1} \otimes \forest{b=b[1,1]} + \forest{1,1} \otimes \forest{b[1]=b,1} + \forest{1,1,2,2} \otimes \forest{b=b} + \forest{b[2]=b[1,1],2} \otimes \one$ \\
    \forest{b[1]=b[1,2],2} &~$\one \otimes \forest{b[1]=b[1,2],2} + \forest{1,1} \otimes \forest{b[1]=b[1]} + \forest{1,1} \otimes \forest{b=b[1],1} + \forest{1,1,2,2} \otimes \forest{b=b} + \forest{b[1]=b[1,2],2} \otimes \one$ \\
    \forest{(b[1,2]),1,2} &~$\one \otimes \forest{(b[1,2]),1,2} + 2 \forest{1,1} \otimes \forest{(b[1]),1} + \forest{1,1,2,2} \otimes \forest{(b)} + \forest{(b[1,2]),1,2} \otimes \one$ \\
    \forest{b[1,2],1,2} &~$\one \otimes \forest{b[1,2],1,2} + 2 \forest{1,1} \otimes \forest{b[1],1} + \forest{1,1,2,2} \otimes \forest{b} + \forest{b[1,2],1,2} \otimes \one$ \\
    \forest{b[1]=b[2],1,2} &~$\one \otimes \forest{b[1]=b[2],1,2} + 2 \forest{1,1} \otimes \forest{b[1]=b,1} + \forest{1,1,2,2} \otimes \forest{b=b} + \forest{b[1]=b[2],1,2} \otimes \one$ \\
    \forest{b=b[1,2],1,2} &~$\one \otimes \forest{b=b[1,2],1,2} + 2 \forest{1,1} \otimes \forest{b=b[1],1} + \forest{1,1,2,2} \otimes \forest{b=b} + \forest{b=b[1,2],1,2} \otimes \one$
\end{longtable}

We refer to the discussion from Section~\ref{section:structure_EAF} for an efficient way to compute the CEM coaction. The computations can be checked using Proposition~\ref{prop:subs_cemdual} with the property
\[ \langle \Delta_{CEM} (\pi), \pi_1 \otimes \pi_2 \rangle = \frac{\sigma(\pi) \langle \pi_1 \subs \pi_2, \pi \rangle}{\sigma(\pi_1) \sigma(\pi_2)}. \]

\renewcommand*{\arraystretch}{1.4}
\begin{longtable}{c|c}
    \caption{CEM coaction for all exotic aromas and primitive exotic aromatic forests up to order three}
    \label{appendix:CEM_coaction}\\
   ~$\pi$ &~$\Delta_{CEM}(\pi)$ \\
	\hhline{=|=}
    \forest{(b)} &~$\forest{b} \otimes \forest{(b)}$ \\
    \forest{b=b} &~$\forest{b,b} \otimes \forest{b=b}$ \\
    \forest{b} &~$\forest{b} \otimes \forest{b}$ \\
    \forest{1,1} &~$\one \otimes \forest{1,1}$ \\
    \hline
    \forest{(b[b])} &~$\forest{b,b} \otimes \forest{(b[b])} + \forest{(b),b} \otimes \forest{(b)} + \forest{b[b]} \otimes \forest{(b)}$ \\
    \forest{(b,b)} &~$\forest{b,b} \otimes \forest{(b,b)} + 2 \forest{b[b]} \otimes \forest{(b)}$ \\
    \forest{(b[1,1])} &~$\forest{b} \otimes \forest{(b[1,1])} + 2 \forest{(b[1]),1} \otimes \forest{(b)} + \forest{b[1,1]} \otimes \forest{(b)}$ \\
    \forest{b=b[b]} &~$\forest{b,b,b} \otimes \forest{b=b[b]} + \forest{b=b,b} \otimes \forest{(b)} + \forest{b,b[b]} \otimes \forest{b=b}$ \\
    \forest{b=b[1,1]} &~$\forest{b,b} \otimes \forest{b=b[1,1]} + 2 \forest{b=b[1],1} \otimes \forest{(b)} + \forest{b,b[1,1]} \otimes \forest{b=b}$ \\
    \forest{b[1]=b[1]} &~$\forest{b,b} \otimes \forest{b[1]=b[1]} + 2 \forest{b=b[1],1} \otimes \forest{(b)}$ \\
    \forest{b[b]} &~$\forest{b,b} \otimes \forest{b[b]} + \forest{b[b]} \otimes \forest{b}$ \\
    \forest{b[1,1]} &~$\forest{b} \otimes \forest{b[1,1]} + \forest{b[1,1]} \otimes \forest{b}$ \\
    \forest{(b),b} &~$\forest{b,b} \otimes \forest{(b),b} + \forest{(b),b} \otimes \forest{b}$ \\
    \forest{(b[1]),1} &~$\forest{b} \otimes \forest{(b[1]),1} + \forest{(b[1]),1} \otimes \forest{b}$ \\
    \forest{b=b,b} &~$\forest{b,b,b} \otimes \forest{b=b,b} + \forest{b=b,b} \otimes \forest{b}$ \\
    \forest{b=b[1],1} &~$\forest{b,b} \otimes \forest{b=b[1],1} + \forest{b=b[1],1} \otimes \forest{b}$ \\
    \forest{b[1],1} &~$\forest{b} \otimes \forest{b[1],1}$ \\
    \forest{(b),1,1} &~$\forest{b} \otimes \forest{(b),1,1}$ \\
    \forest{b=b,1,1} &~$\forest{b,b} \otimes \forest{b=b,1,1}$ \\
    \hline
    \forest{(b[b[b]])} &~$\forest{b,b,b} \otimes \forest{(b[b[b]])} + 2 \forest{b[b],b} \otimes \forest{(b[b])} + \forest{(b),b[b]} \otimes \forest{(b)} + \forest{(b[b]),b} \otimes \forest{(b)} + \forest{b[b[b]]} \otimes \forest{(b)}$ \\
    \forest{(b[b,b])} &~$\forest{b,b,b} \otimes \forest{(b[b,b])} + 2 \forest{b[b],b} \otimes \forest{(b[b])} + 2 \forest{(b[b]),b} \otimes \forest{(b)} + \forest{b[b,b]} \otimes \forest{(b)}$ \\
    \forest{(b[b],b)} & \makecell{$\forest{b,b,b} \otimes \forest{(b[b],b)} + \forest{b[b],b} \otimes \forest{(b,b)} + 2 \forest{b[b],b} \otimes \forest{(b[b])} + \forest{b[b,b]} \otimes \forest{(b)} + \forest{(b,b),b} \otimes \forest{(b)}$ \\~$ + \forest{b[b[b]]} \otimes \forest{(b)}$} \\
    \forest{(b,b,b)} &~$\forest{b,b,b} \otimes \forest{(b,b,b)} + 3 \forest{b[b],b} \otimes \forest{(b,b)} + 3 \forest{b[b[b]]} \otimes \forest{(b)}$ \\
    \forest{(b[b[1,1]])} & \makecell{$\forest{b,b} \otimes \forest{(b[b[1,1]])} + \forest{b[1,1],b} \otimes \forest{(b[b])} + \forest{b[b]} \otimes \forest{(b[1,1])} + \forest{(b),b[1,1]} \otimes \forest{(b)}$ \\~$ + 2 \forest{(b[b[1]]),1} \otimes \forest{(b)} + \forest{b[b[1,1]]} \otimes \forest{(b)}$} \\
    \forest{(b[b[1],1])} & \makecell{$\forest{b,b} \otimes \forest{(b[b[1],1])} + \forest{b[b]} \otimes \forest{(b[1,1])} + \forest{(b[b[1]]),1} \otimes \forest{(b)} + \forest{(b[b,1]),1} \otimes \forest{(b)}$ \\~$ + \forest{b[b[1],1]} \otimes \forest{(b)}$} \\
    \forest{(b[b,1,1])} & \makecell{$\forest{b,b} \otimes \forest{(b[b,1,1])} + \forest{b[b]} \otimes \forest{(b[1,1])} + \forest{b[1,1],b} \otimes \forest{(b[b])} + \forest{(b[1,1]),b} \otimes \forest{(b)}$ \\~$ + 2 \forest{(b[b,1]),1} \otimes \forest{(b)} + \forest{b[b,1,1]} \otimes \forest{(b)}$} \\
    \forest{(b[1]),(b[1])} & \makecell{$\forest{b,b} \otimes \forest{(b[1]),(b[1])} + 2 \forest{(b),b} \otimes \forest{(b[1,1])} + 2 \forest{(b[1]),1,b} \otimes \forest{(b[b])} + 2 \forest{(b[1]),(b),1} \otimes \forest{(b)}$ \\~$ + 2 \forest{(b[1]),b[1]} \otimes \forest{(b)}$} \\
    \forest{(b[1],b[1])} &~$\forest{b,b} \otimes \forest{(b[1],b[1])} + 2 \forest{b[b[1],1]} \otimes \forest{(b)} + 2 \forest{(b[1],b),1} \otimes \forest{(b)} + 2 \forest{b[b]} \otimes \forest{(b[1,1])}$ \\
    \forest{(b[1,1],b)} & \makecell{$\forest{b,b} \otimes \forest{(b[1,1],b)} + \forest{b[1,1],b} \otimes \forest{(b,b)} + \forest{b[b]} \otimes \forest{(b[1,1])} + \forest{b[b[1,1]]} \otimes \forest{(b)}$ \\~$ + 2 \forest{(b[1],b),1} \otimes \forest{(b)} + \forest{b[1,1,b]} \otimes \forest{(b)}$} \\
    \forest{(b[1,1,2,2])} &~$\forest{b} \otimes \forest{(b[1,1,2,2])} + 2 \forest{b[1,1]} \otimes \forest{(b[1,1])} + 4 \forest{(b[1,1,2]),2} \otimes \forest{(b)} + \forest{b[1,1,2,2]} \otimes \forest{(b)}$ \\
    \forest{b=b[b[b]]} & \makecell{$\forest{b,b,b,b} \otimes \forest{b=b[b[b]]} + 2 \forest{b[b],b,b} \otimes \forest{b=b[b]} + \forest{b=b,b[b]} \otimes \forest{(b)} + \forest{b=b[b],b} \otimes \forest{(b)}$ \\~$ + \forest{b,b[b[b]]} \otimes \forest{b=b}$} \\
    \forest{b=b[b,b]} &~$\forest{b,b,b,b} \otimes \forest{b=b[b,b]} + 2 \forest{b[b],b,b} \otimes \forest{b=b[b]} + 2 \forest{b=b[b],b} \otimes \forest{(b)} + \forest{b,b[b,b]} \otimes \forest{b=b}$ \\
    \forest{b[b]=b[b]} &~$\forest{b,b,b,b} \otimes \forest{b[b]=b[b]} + \forest{b[b],b,b} \otimes \forest{b=b[b]} + 2 \forest{b=b[b],b} \otimes \forest{(b)} + \forest{b[b],b[b]} \otimes \forest{b=b}$ \\
    \forest{b=b[b[1,1]]} & \makecell{$\forest{b,b,b} \otimes \forest{b=b[b[1,1]]} + \forest{b[1,1],b,b} \otimes \forest{b=b[b]} + \forest{b[b],b} \otimes \forest{b=b[1,1]} + \forest{b=b,b[1,1]} \otimes \forest{(b)}$ \\~$ + \forest{b=b[b[1]],1} \otimes \forest{(b)} + \forest{b,b[b[1,1]]} \otimes \forest{b=b}$} \\
    \forest{b=b[b[1],1]} & \makecell{$\forest{b,b,b} \otimes \forest{b=b[b[1],1]} + \forest{b[b],b} \otimes \forest{b=b[1,1]} + \forest{b=b[b[1]],1} \otimes \forest{(b)} + \forest{b=b[b,1],1} \otimes \forest{(b)}$ \\~$ + \forest{b=b[1],b[1]} \otimes \forest{(b)} + \forest{b,b[b[1],1]} \otimes \forest{b=b}$} \\
    \forest{b[1]=b[b[1]]} & \makecell{$\forest{b,b,b} \otimes \forest{b[1]=b[b[1]]} + \forest{b=b[b[1]],1} \otimes \forest{(b)} + \forest{b[1]=b[b],1} \otimes \forest{(b)} + \forest{b[1]=b,b[1]} \otimes \forest{(b)}$ \\~$ + \forest{b[b],b} \otimes \forest{b[1]=b[1]}$} \\
    \forest{b=b[b,1,1]} & \makecell{$\forest{b,b,b} \otimes \forest{b=b[b,1,1]} + \forest{b,b[b]} \otimes \forest{b=b[1,1]} + \forest{b,b,b[1,1]} \otimes \forest{b=b[b]} + \forest{b=b[1],1,b} \otimes \forest{(b[b])}$ \\~$ + \forest{b=b[1,1],b} \otimes \forest{(b)} + 2 \forest{b=b[b,1],1} \otimes \forest{(b)} + \forest{b,b[b,1,1]} \otimes \forest{b=b}$} \\
    \forest{b[1]=b[b,1]} & \makecell{$\forest{b,b,b} \otimes \forest{b[1]=b[b,1]} + \forest{b,b[b]} \otimes \forest{b[1]=b[1]} + \forest{b[1]=b,1,b} \otimes \forest{(b[b])} + \forest{b=b[b,1],1} \otimes \forest{(b)}$ \\~$ + \forest{b[1]=b[b],1} \otimes \forest{(b)} + \forest{b[1]=b[1],b} \otimes \forest{(b)}$} \\
    \forest{b[1,1]=b[b]} & \makecell{$\forest{b,b,b} \otimes \forest{b[1,1]=b[b]} + \forest{b,b[b]} \otimes \forest{b[1,1]=b} + \forest{b[1,1],b,b} \otimes \forest{b=b[b]} + \forest{b[1]=b,1,b} \otimes \forest{(b[b])}$ \\~$ + \forest{b[1,1]=b,b} \otimes \forest{(b)} + 2 \forest{b[1]=b[b],1} \otimes \forest{(b)} + \forest{b[1,1],b[b]} \otimes \forest{b=b}$} \\
    \forest{b=b[1,1,2,2]} &~$\forest{b,b} \otimes \forest{b=b[1,1,2,2]} + 2 \forest{b[1,1],b} \otimes \forest{b=b[1,1]} + 4 \forest{b=b[1,1,2],2} \otimes \forest{(b)} + \forest{b,b[1,1,2,2]} \otimes \forest{b=b}$ \\
    \forest{b[2]=b[1,1,2]} & \makecell{$\forest{b,b} \otimes \forest{b[2]=b[1,1,2]} + \forest{b[1,1],b} \otimes \forest{b[1]=b[1]} + \forest{b=b[1,1,2],2} \otimes \forest{(b)} + 2 \forest{b[2]=b[1,2],1} \otimes \forest{(b)}$ \\~$ + \forest{b[2]=b[1,1],2} \otimes \forest{(b)}$} \\
    \forest{b[2,2]=b[1,1]} &~$\forest{b,b} \otimes \forest{b[2,2]=b[1,1]} + 2 \forest{b[1,1],b} \otimes \forest{b=b[1,1]} + 4 \forest{b[2]=b[1,1],2} \otimes \forest{(b)} + \forest{b[1,1],b[2,2]} \otimes \forest{b=b}$ \\
    \forest{b[1,2]=b[1,2]} &~$\forest{b,b} \otimes \forest{b[1,2]=b[1,2]} + 4 \forest{b[2]=b[1,1],2} \otimes \forest{(b)}$ \\
    \forest{b=b[1],(b[1])} & \makecell{$\forest{b,b,b} \otimes \forest{b=b[1],(b[1])} + \forest{b=b[1],1,b} \otimes \forest{(b[b])} + \forest{b,b,(b[1]),1} \otimes \forest{b=b[b]} + \forest{(b),b} \cdot \forest{b} \otimes \forest{b=b[1,1]}$ \\~$ + \forest{b=b,1,(b[1])} \otimes \forest{(b)} + \forest{(b),1,b=b[1]} \otimes \forest{(b)} + \forest{b,(b[1]),b[1]} \otimes \forest{b=b}$} \\
    \forest{b=b[1],b[1]=b} &~$\forest{b,b,b,b} \otimes \forest{b=b[1],b[1]=b} + 2 \forest{b=b[1],1,b,b} \otimes \forest{b=b[b]} + 2 \forest{b=b,b=b[1],1} \otimes \forest{(b)}$ \\
    \forest{b[b[b]]} &~$\forest{b,b,b} \otimes \forest{b[b[b]]} + 2 \forest{b[b],b} \otimes \forest{b[b]} + \forest{b[b[b]]} \otimes \forest{b}$ \\
    \forest{b[b,b]} &~$\forest{b,b,b} \otimes \forest{b[b,b]} + 2 \forest{b[b],b} \otimes \forest{b[b]} + \forest{b[b,b]} \otimes \forest{b}$ \\
    \forest{b[b[1,1]]} &~$\forest{b,b} \otimes \forest{b[b[1,1]]} + \forest{b[1,1],b} \otimes \forest{b[b]} + \forest{b[b]} \otimes \forest{b[1,1]} + \forest{b[b[1,1]]} \otimes \forest{b}$ \\
    \forest{b[b[1],1]} &~$\forest{b,b} \otimes \forest{b[b[1],1]} + \forest{b[b]} \otimes \forest{b[1,1]} + \forest{b[b[1],1]} \otimes \forest{b}$ \\
    \forest{b[b,1,1]} &~$\forest{b,b} \otimes \forest{b[b,1,1]} + \forest{b[1,1],b} \otimes \forest{b[b]} + \forest{b[b]} \otimes \forest{b[1,1]} + \forest{b[b,1,1]} \otimes \forest{b}$ \\
    \forest{b[1,1,2,2]} &~$\forest{b} \otimes \forest{b[1,1,2,2]} + 2 \forest{b[1,1]} \otimes \forest{b[1,1]} + \forest{b[1,1,2,2]} \otimes \forest{b}$ \\
    \forest{(b),b[b]} &~$\forest{b,b,b} \otimes \forest{(b),b[b]} + 2 \forest{(b),b} \cdot \forest{b} \otimes \forest{b[b]} + \forest{b[b]} \otimes \forest{(b),b} + \forest{(b),b[b]} \otimes \forest{b}$ \\
    \forest{(b[b]),b} &~$\forest{b,b,b} \otimes \forest{(b[b]),b} + \forest{b[b],b} \otimes \forest{(b),b} + \forest{(b),b} \cdot \forest{b} \otimes \forest{(b),b} + \forest{(b[b]),b} \otimes \forest{b}$ \\
    \forest{(b,b),b} &~$\forest{b,b,b} \otimes \forest{(b,b),b} + 2 \forest{b[b],b} \otimes \forest{(b),b} + \forest{(b,b),b} \otimes \forest{b}$ \\
    \forest{(b),b[1,1]} &~$\forest{b,b} \otimes \forest{(b),b[1,1]} + \forest{(b),b} \otimes \forest{b[1,1]} + \forest{b[1,1]} \otimes \forest{(b),b} + \forest{(b),b[1,1]} \otimes \forest{b}$ \\
    \forest{(b),(b),b} &~$\forest{b,b,b} \otimes \forest{(b),(b),b} + 4 \forest{(b),b} \cdot \forest{b} \otimes \forest{(b),b} + \forest{(b),(b),b} \otimes \forest{b}$ \\
    \forest{(b[1,1]),b} &~$\forest{b,b} \otimes \forest{(b[1,1]),b} + \forest{b[1,1],b} \otimes \forest{(b),b} + 2 \forest{(b[1]),1,b} \otimes \forest{(b),b} + \forest{(b),b} \otimes \forest{b[1,1]} + \forest{(b[1,1]),b} \otimes \forest{b}$ \\
    \forest{(b),(b[1]),1} &~$\forest{b,b} \otimes \forest{(b),(b[1]),1} + \forest{(b),b} \otimes \forest{(b[1]),1} + \forest{b,(b[1]),1} \otimes \forest{(b),b} + \forest{(b),(b[1]),1} \otimes \forest{b}$ \\
    \forest{(b[b[1]]),1} &~$\forest{b,b} \otimes \forest{(b[b[1]]),1} + \forest{b[b]} \otimes \forest{(b[1]),1} + \forest{(b[b[1]]),1} \otimes \forest{b}$ \\
    \forest{(b[b,1]),1} &~$\forest{b,b} \otimes \forest{(b[b,1]),1} + \forest{b[b]} \otimes \forest{(b[1]),1} + \forest{(b[b,1]),1} \otimes \forest{b}$ \\
    \forest{(b[1]),b[1]} &~$\forest{b,b} \otimes \forest{(b[1]),b[1]} + \forest{(b),b} \otimes \forest{b[1,1]} + \forest{(b[1]),1,b} \otimes \forest{b[b]} + \forest{(b[1]),b[1]} \otimes \forest{b}$ \\
    \forest{(b,b[1]),1} &~$\forest{b,b} \otimes \forest{(b,b[1]),1} + 2 \forest{b[b]} \otimes \forest{(b[1]),1} + \forest{(b,b[1]),1} \otimes \forest{b}$ \\
    \forest{(b[1,2,2]),1} &~$\forest{b} \otimes \forest{(b[1,2,2]),1} + \forest{b[1,1]} \otimes \forest{(b[1]),1} + \forest{(b[1]),1} \otimes \forest{b[1,1]} + \forest{(b[1,2,2]),1} \otimes \forest{b}$ \\
    \forest{b=b,b[b]} &~$\forest{b,b,b,b} \otimes \forest{b=b,b[b]} + \forest{b,b,b[b]} \otimes \forest{b=b,b} + 2 \forest{b=b,b} \cdot \forest{b} \otimes \forest{b[b]} + \forest{b=b,b[b]} \otimes \forest{b}$ \\
    \forest{b=b[b],b} &~$\forest{b,b,b,b} \otimes \forest{b=b[b],b} + \forest{b,b,b[b]} \otimes \forest{b=b,b} + \forest{b=b,b} \cdot \forest{b} \otimes \forest{b[b]} + \forest{b=b,b} \cdot \forest{b} \otimes \forest{(b),b} + \forest{b=b[b],b} \otimes \forest{b}$ \\
    \forest{b=b[1,1],b} &~$\forest{b,b,b} \otimes \forest{b=b[1,1],b} + \forest{b,b,b[1,1]} \otimes \forest{b=b,b} + \forest{b=b,b} \otimes \forest{b[1,1]} + 2 \forest{b=b[1],1,b} \otimes \forest{(b),b} + \forest{b=b[1,1],b} \otimes \forest{b}$ \\
    \forest{b[1]=b[1],b} &~$\forest{b,b,b} \otimes \forest{b[1]=b[1],b} + \forest{b=b,b} \otimes \forest{b[1,1]} + 2 \forest{b=b[1],1,b} \otimes \forest{(b),b} + \forest{b[1]=b[1],b} \otimes \forest{b}$ \\
    \forest{b=b,b[1,1]} &~$\forest{b,b,b} \otimes \forest{b=b,b[1,1]} + \forest{b=b,b} \otimes \forest{b[1,1]} + \forest{b[1,1],b,b} \otimes \forest{b=b,b} + \forest{b=b,b[1,1]} \otimes \forest{b}$ \\
    \forest{(b),b=b,b} &~$\forest{b,b,b,b} \otimes \forest{(b),b=b,b} + 3 \forest{(b),b} \cdot \forest{b,b} \otimes \forest{b=b,b} + 2 \forest{b=b,b} \cdot \forest{b,b} \otimes \forest{(b),b} + \forest{(b),b=b,b} \otimes \forest{b}$ \\
    \forest{b=b,b=b,b} &~$\forest{b,b,b,b} \otimes \forest{b=b,b=b,b} + 6 \forest{b=b,b} \cdot \forest{b,b} \otimes \forest{b=b,b} + \forest{b=b,b=b,b} \otimes \forest{b}$ \\
    \forest{b=b,(b[1]),1} &~$\forest{b,b,b} \otimes \forest{b=b,(b[1]),1} + \forest{b=b,b} \otimes \forest{(b[1]),1} + \forest{b,b,(b[1]),1} \otimes \forest{b=b,b} + \forest{b=b,(b[1]),1} \otimes \forest{b}$ \\
    \forest{(b),b=b[1],1} &~$\forest{b,b,b} \otimes \forest{(b),b=b[1],1} + 2 \forest{(b),b} \cdot \forest{b} \otimes \forest{b=b[1],1} + \forest{b,b=b[1],1} \otimes \forest{(b),b} + \forest{(b),b=b[1],1} \otimes \forest{b}$ \\
    \forest{b=b,b=b[1],1} &~$\forest{b,b,b,b} \otimes \forest{b=b,b=b[1],1} + 4 \forest{b=b,b} \cdot \forest{b} \otimes \forest{b=b[1],1} + \forest{b=b[1],1,b,b} \otimes \forest{b=b,b} + \forest{b=b,b=b[1],1} \otimes \forest{b}$ \\
    \forest{b=b[b[1]],1} &~$\forest{b,b,b} \otimes \forest{b=b[b[1]],1} + \forest{b[b],b} \otimes \forest{b=b[1],1} + \forest{b=b[b[1]],1} \otimes \forest{b}$ \\
    \forest{b=b[b,1],1} &~$\forest{b,b,b} \otimes \forest{b=b[b,1],1} + \forest{b,b[b]} \otimes \forest{b=b[1],1} + \forest{b=b[1],1,b} \otimes \forest{b[b]} + \forest{b=b[b,1],1} \otimes \forest{b}$ \\
    \forest{b[1]=b[b],1} &~$\forest{b,b,b} \otimes \forest{b[1]=b[b],1} + \forest{b,b[b]} \otimes \forest{b[1]=b,1} + \forest{b[1]=b,1,b} \otimes \forest{b[b]} + \forest{b[1]=b[b],1} \otimes \forest{b}$ \\
    \forest{b=b[1,1,2],2} &~$\forest{b,b} \otimes \forest{b=b[1,1,2],2} + \forest{b[1,1],b} \otimes \forest{b=b[1],1} + \forest{b=b[1],1,b} \otimes \forest{b[1,1]} + \forest{b,b[1,1,2],2} \otimes \forest{b}$ \\
    \forest{b[2]=b[1,1],2} &~$\forest{b,b} \otimes \forest{b[2]=b[1,1],2} + \forest{b[1,1],b} \otimes \forest{b[1]=b,1} + \forest{b[2]=b,2} \otimes \forest{b[1,1]} + \forest{b[2]=b[1,1],2} \otimes \forest{b}$ \\
    \forest{b[1]=b[1,2],2} &~$\forest{b,b} \otimes \forest{b[1]=b[1,2],2} + \forest{b[1]=b[1,2],2} \otimes \forest{b}$ \\
    \forest{b[b[1]],1} &~$\forest{b,b} \otimes \forest{b[b[1]],1} + \forest{b[b]} \otimes \forest{b[1],1}$ \\
    \forest{b[b,1],1} &~$\forest{b,b} \otimes \forest{b[b,1],1} + \forest{b[b]} \otimes \forest{b[1],1}$ \\
    \forest{b[1],b[1]} &~$\forest{b,b} \otimes \forest{b[1],b[1]}$ \\
    \forest{b[1,2,2],1} &~$\forest{b} \otimes \forest{b[1,2,2],1} + \forest{b[1,1]} \otimes \forest{b[1],1}$ \\
    \forest{(b),(b[1]),1} &~$\forest{b,b} \otimes \forest{(b),(b[1]),1} + \forest{(b),b} \otimes \forest{(b[1]),1} + \forest{b,(b[1]),1} \otimes \forest{(b),b} \forest{(b),(b[1]),1} \otimes \forest{b}$ \\
    \forest{(b[b]),1,1} &~$\forest{b,b} \otimes \forest{(b[b]),1,1} + \forest{(b),b} \otimes \forest{(b),1,1}$ \\
    \forest{(b,b),1,1} &~$\forest{b,b} \otimes \forest{(b,b),1,1} + 2 \forest{b[b]} \otimes \forest{(b),1,1}$ \\
    \forest{(b),(b),1,1} &~$\forest{b,b} \otimes \forest{(b),(b),1,1} + 2 \forest{(b),b} \otimes \forest{(b),1,1}$ \\
    \forest{(b[1,1]),2,2} &~$\forest{b} \otimes \forest{(b[1,1]),2,2} + \forest{b[1,1]} \otimes \forest{(b),1,1} + 2 \forest{(b[1]),1} \otimes \forest{(b),1,1}$ \\
    \forest{(b[1,2]),1,2} &~$\forest{b} \otimes \forest{(b[1,2]),1,2} + 2 \forest{(b[1]),1} \otimes \forest{b[1],1}$ \\
    \forest{b=b,b[1],1} &~$\forest{b,b,b} \otimes \forest{b=b,b[1],1} + \forest{b=b,b} \otimes \forest{b[1],1}$ \\
    \forest{b=b[b],1,1} &~$\forest{b,b,b} \otimes \forest{b=b[b],1,1} + \forest{b=b,b} \otimes \forest{(b),1,1}$ \\
    \forest{b=b[1,1],2,2} &~$\forest{b,b} \otimes \forest{b=b[1,1],2,2} + 2 \forest{b=b[1],1} \otimes \forest{(b),1,1}$ \\
    \forest{b[1]=b[1],2,2} &~$\forest{b,b} \otimes \forest{b[1]=b[1],2,2} + 2 \forest{b=b[1],1} \otimes \forest{(b),1,1}$ \\
    \forest{(b),b=b,1,1} &~$\forest{b,b,b} \otimes \forest{(b),b=b,1,1} + 2 \forest{(b),b} \cdot \forest{b} \otimes \forest{b=b,1,1} + \forest{b=b,b} \otimes \forest{(b),1,1}$ \\
    \forest{b=b,b=b,1,1} &~$\forest{b,b,b,b} \otimes \forest{b=b,b=b,1,1} + 4 \forest{b=b,b} \cdot \forest{b} \otimes \forest{b=b,1,1}$ \\
    \forest{b[1]=b[2],1,2} &~$\forest{b,b} \otimes \forest{b[1]=b[2],1,2} + 2 \forest{b[1]=b,1} \otimes \forest{b[1],1}$ \\
    \forest{b=b[1,2],1,2} &~$\forest{b,b} \otimes \forest{b=b[1,2],1,2} + 2 \forest{b[1]=b,1} \otimes \forest{b[1],1}$ \\
    \forest{b[1,2],1,2} &~$\forest{b} \otimes \forest{b[1,2],1,2}$
\end{longtable}

\end{appendices}

\end{document}